 \newtheorem{thm}{Theorem}[section]
 \newtheorem{lem}[thm]{Lemma}
 \newtheorem{prop}[thm]{Proposition}
 \theoremstyle{definition}
 \newtheorem{defn}[thm]{Definition}
 \theoremstyle{remark}
 \newtheorem{rem}[thm]{Remark}
 \numberwithin{equation}{section}
\newcommand{\diag}{\text{diag}}
\renewcommand \epsilon \varepsilon
\newcommand {\R}{{\bf R}}
\newcommand {\C}{{\bf C}}
\newcommand {\Z}{{\bf Z}}
\def\co{\colon\thinspace}
\newcommand{\mat}[4]{\left(\begin{matrix} #1 & #2 \\ #3 & #4 \end{matrix}\right)}
\newcommand{\im}{\text{\rm Im}}
 \newcommand{\hol}{\text{\rm hol}}
\newcommand{\re}{\text{\rm Re}}
\newcommand{\Id}{\text{\rm Id}}
\newcommand{\Mas}{\text{\rm Mas}}
\newcommand{\Stab}{\text{\rm Stab}}
\newcommand{\SF}{\text{\rm SF}}
\newcommand{\wind}{\text{\rm wind}}
\renewcommand{\ker}{\text{\rm Ker}}
\newcommand{\APS}{\sP}
\newcommand {\tensor}{\otimes}
\newcommand {\contract}{\lrcorner}
\newcommand{\cL}{{\mathcal L}}
\newcommand{\cP}{{\mathcal P}}
\newcommand{\cZ}{{\mathcal Z}}
\newcommand{\sH}{{\mathscr H}}
 \newcommand{\sL}{{\mathscr L}}
\newcommand{\sM}{{\mathscr M}}
\newcommand{\sN}{{\mathscr N}}
\newcommand{\sP}{{\mathscr P}}
\newcommand{\al}{\alpha}
\newcommand{\be}{\beta}
\newcommand{\ga}{\gamma}
\newcommand{\ep}{\varepsilon}
\renewcommand{\th}{\theta}
\newcommand{\la}{\lambda}
\newcommand{\si}{\sigma}
\newcommand{\Ga}{\Gamma}
\newcommand{\Om}{\Omega}
\newcommand{\La}{\Lambda}
\newcommand{\Si}{\Sigma}
\newcommand{\Th}{\Theta}
\begin{document}

\title[Splitting spectral flow and the SU(3) Casson invariant]{Splitting the spectral flow and \\the SU(3) Casson invariant for spliced sums}

\author{Hans U. Boden}
\author{Benjamin Himpel}

\address{Department of Mathematics \& Statistics,
McMaster University,
1280 Main Street West,
Hamilton, Ontario L8S-4K1,
Canada}
\email{boden@mcmaster.ca}

\address{Mathematisches Institut,
Rheinische Friedrich-Wilhelms-Universit\"at Bonn,
Beringstr. 1,
53115 Bonn,
Germany}
\email{himpel@math.uni-bonn.de}

\subjclass[2000]{Primary: 58J30, Secondary: 57M27, 57R57}
\keywords{gauge theory, spectral flow, Maslov index, spliced sum, torus knot, Casson invariant}
\date{\today}

\begin{abstract}
We show that the $SU(3)$ Casson invariant for spliced sums along
certain torus knots equals 16 times the product of their
$SU(2)$ Casson knot invariants. The key step is a splitting formula for
$su(n)$ spectral flow for closed 3-manifolds split along a torus.
\end{abstract}

\maketitle
\section{Introduction}
Given knots $K_1$ and $K_2$ in homology 3-spheres $M_1$ and $M_2,$ respectively,
the  spliced sum of
$M_1$ and $M_2$ along $K_1$ and $K_2$ is the homology 3-sphere
obtained by gluing
the two knot complements along their boundaries matching the meridian of one knot to the longitude of the other. This operation is a generalization of connected sum; indeed when $K_1$ and $K_2$ are trivial knots, the spliced sum of $M_1$ and $M_2$ along $K_1$ and $K_2$ is none other than the connected sum $M_1 \# M_2.$
Casson's invariant $\la_{SU(2)}$, which is additive under connected sum, is also additive under the more general operation of spliced sum,
for a proof, see \cite{BN, FM}.
What is remarkable about this is that the Casson invariant of a spliced sum does not depend
on the knots $K_1$ and $K_2$ along which the splicing is performed.

While the integer-valued $SU(3)$ Casson invariant $\tau_{SU(3)}$ of \cite{BHK1} is not additive under connected sum, by \cite[Theorem 4]{BHK1}, the difference  $\tau_{SU(3)} -  2\la^2_{SU(2)}$ is, and a natural question to ask is whether it is also additive under spliced sum.
In general, the answer is no and we briefly explain why not.
Recall from \cite{S} that a Seifert-fibered
 homology sphere $\Si(p,q,r,s)$ can be described as a spliced sum of
 Brieskorn spheres along the cores of their singular fibers in three different ways:
 (i) the spliced sum of $\Si(p,q, rs)$ and $\Si(r,s, pq)$;  (ii) the spliced sum of $\Si(p, s, qr)$ and $\Si(q, r, ps)$;
 and (iii) the spliced sum of $\Si(p, r,  qs)$ and $\Si(q, s, pr)$.
Additivity under splicing would imply that the evaluation of $\tau_{SU(3)} -  2\la^2_{SU(2)}$
on all three of these pairs of Brieskorn spheres   agree, but
the results in   \cite{BHK2}
provide examples where they do not. This shows that
$\tau_{SU(3)} -  2\la^2_{SU(2)}$ is not additive under spliced sum.

Thus, it is an interesting problem to understand
 the behaviour of the $SU(3)$ Casson invariant under spliced sum, and in this paper we focus on
  the simplest possible case, namely when $K_1$ and $K_2$ are
 torus knots. We verify a conjecture of \cite{BH} by identifying
the $SU(3)$ Casson invariant of the spliced sum with a multiple of
the product of the Casson $SU(2)$ knot invariants in the case $K_1$ and $K_2$ are $(2,q_1)$ and $(2,q_2)$ torus knots.
Our results combine a detailed analysis of the $SU(3)$ representation varieties of the knot complements with computations of the $su(3)$ spectral flow of the odd signature operator coupled to a path of
$SU(3)$ connections. An essential tool developed here
is the general splitting formula  of Theorem
\ref{Thm2.10},   which is applied to compute the spectral flow for closed 3-manifolds split along a torus.

We now outline the argument and highlight the special role played by the splitting formula.
As before, we assume $K_1$ and $K_2$ are knots in homology spheres $M_1$ and $M_2$
and we denote by  $X_1$ and $X_2$ their complements and by $M=X_1 \cup_T X_2$ their spliced sum. A representation $\al \co \pi_1(M) \to SU(3)$
determines, by restriction, representations $\al_1 \co \pi_1(X_1) \to SU(3)$ and $\al_2 \co \pi_1(X_2) \to SU(3)$, and  
the results of \cite{BH} show that for torus knots the conjugacy class  $[\al]$
contributes to the $SU(3)$
Casson invariant of $M$ only when $\al$ is irreducible and both
$\al_1$ and $\al_2$ are reducible.
By conjugating, we can arrange that
$\al_1$  is an $S(U(2)\times U(1))$ representation and  that
$\al_2$ is an $S(U(1) \times U(2))$ representation.
In order to compute $\tau_{SU(3)}(M),$ we must
enumerate all such representations and compute the $su(3)$ spectral
flow from the trivial connection $\Th$ to the flat connection $A$ on $M$ corresponding to $\al$.

Since
the $S(U(2)\times U(1))$ representation varieties of torus knots are connected, there
is a path $A_{1,t}$ of flat $S(U(2)\times U(1))$ connections on $X_1$
connecting $\Th|_{X_1}$ to $A|_{X_1}$, and likewise a path $A_{2,t}$ of flat $S(U(1)\times U(2))$ connections
on $X_2$ connecting $\Th|_{X_2}$ to $A|_{X_2}$. Moreover, these paths can be chosen to satisfy the
hypotheses in Theorem \ref{Thm2.10}. The splitting theorem then
describes the spectral flow on the spliced sum
$M$  as a sum of the  spectral flows  of the paths $A_{1,t}$ and $A_{2,t}$ of {\it flat}
 connections on knot complements $X_1$ and $X_2$, the
spectral flow  of a closed path of  $SU(3)$ connections on the solid
torus, and some finite dimensional Maslov triple indices. Each of
these terms can be computed by direct analysis, and from this we
deduce our main application, Theorem \ref{mainresult}, which
identifies the $SU(3)$ Casson invariant of the spliced sum   with
four times the product of the $SU(2)$ Casson knot invariants in the
case $K_1$ and $K_2$ are $(2,q_1)$ and $(2,q_2)$ torus knots.

Here is a brief synopsis of the rest of the paper. In Section
\ref{Sec2} we present the splitting theorem in the general
setting. 
Section \ref{Sec3} contains some general results about 
$SU(3)$ representations of spliced sums, and
Sections \ref{Sec4} and \ref{Sec5} give
descriptions of the reducible and irreducible $SU(3)$
representations of  torus knots. Section \ref{Sec6} contains
cohomology calculations, and Section \ref{Sec7} presents the
main application to computing the $SU(3)$ Casson invariant for
spliced sums along torus knots.

\bigskip \noindent
{\bf Acknowledgements \ } The authors would like to thank
Chris Herald, Paul Kirk, and Matthias Lesch for many helpful discussions. 
HUB was supported by a grant from the Natural Sciences and Engineering Research Council of Canada,
and BH would like to thank the Cluster of Excellence at Bonn
University for their financial support.
Both authors gratefully acknowledge partial support from the Max-Planck-Institute for Mathematics.

\section{A splitting formula for $su(n)$ spectral flow}\label{Sec2}

The $SU(3)$ Casson invariant for homology 3-spheres is defined in
\cite{BHK1} by counting counting gauge orbits of
irreducible (perturbed) flat $SU(3)$ connections with sign given by
the $su(3)$ spectral flow. In the case of a 3-manifold split along a
surface, a useful tool for performing computations of the spectral
flow is provided by splitting the spectral flow along the manifold
decomposition. Unfortunately, previous splitting formulas treat
mainly the $SU(2)$ case and do not readily apply to the present
situation. Therefore, we will develop a suitably general splitting
formula for 3-manifolds split along a torus. Since the space of
connections is contractible, the spectral flow on a closed manifold
only depends on the endpoints, hence we will write
$\SF(A_0,A_1):=\SF(A_t)$ for any path $A_t$ from $A_0$ to $A_1$. 
When working on manifolds with boundary, it
is essential to have a family or at least a path of ``nice''
boundary conditions associated to the restriction of $A_t$
to the boundary (see \cite{APS}). For example, given a path of certain
Atiyah-Patodi-Singer boundary conditions,
 we could derive a splitting formula for arbitrary splitting surfaces.
In the case of a torus splitting, we describe an explicit family of boundary conditions which is
suitable for all the spectral flow computations we have in mind.

\begin{figure}[th]
\begin{center}
\leavevmode \psfrag{S}{$X$} \psfrag{X}{$Y$} \psfrag{Sigma}{$T$}
\psfrag{[-1,1]}{$[-1,1]$}
\includegraphics[scale=.4]{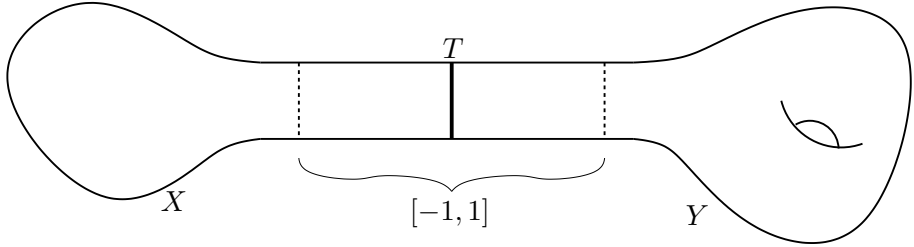}
\end{center}
\caption{The collar around $T$}  \label{Fig1}
\end{figure}

For the splitting formula we will assume the following:
\begin{enumerate}
\item The orientation of the torus $T = S^1 \times S^1 = \{(e^{im},e^{i\ell})\mid m,\ell \in
 [0,2\pi)\}$  is determined by $dm \wedge d\ell \in \Om^2(T)$.
 We regard $T$ with its product metric from the standard metric on $S^1$,
 and note that  the fundamental group $\pi_1(T)$ is the free abelian group generated
 by the meridian $\mu =\{(e^{im},1)\}$ and longitude $\la = \{(1,e^{i\ell})\}$.
\item The 3-manifolds $X$ and $Y$ have boundary $T$ and are oriented so
  that $\partial X = T = -\partial Y$. We place metrics on $X$ and $Y$ such that
  collars of $\partial X$ and $\partial Y$ are isometric to $[-1,0]\times T$ and $[0,1]\times  T$, respectively.
\item Consider the 3-manifold $M = X \cup_T Y$ with the orientation
  and metric induced by the orientation and metric on $X$ and $Y$. See  Figure \ref{Fig1}.
\item Fix a principal bundle with structure group  $SU(n)$ over $M$ and consider its trivialization.
\end{enumerate}

For an $SU(n)$ connection $A\in\Om^{1}(M;su(n))$, the {\em odd signature operator twisted by $A$} is defined to be
\begin{eqnarray*}
D_A\co \Om^{0+1}(M;su(n)) & \to & \Om^{0+1}(M;su(n))\\
(\al,\be) & \mapsto & (d_A^*\be,*d_A \be + d_A \al),
\end{eqnarray*}
where  $\Om^{0+1}(M;su(n)) = \Om^0(M)\tensor su(n)\, \oplus\, \Om^1(M)
\tensor su(n)$. For an $SU(n)$ connection $a\in\Om^{1}(T; su(n))$, the {\em de Rham
operator twisted by $a$} is defined to be
\begin{eqnarray*}
S_a\co \Om^{0+1+2}(T;su(n)) & \to & \Om^{0+1+2}(T;su(n))\\
(\al,\be,\ga) & \mapsto & (*d_a\be,-*d_a \al-d_a*\ga, d_a* \be).
\end{eqnarray*}
If $a$ is flat, then the Laplacian  twisted by $a$ is given by
$\Delta_a = S_a^2$, which is an operator $$\Delta_a \co \Om^{0+1+2}(T;su(n)) \to\Om^{0+1+2}(T;su(n)).$$

Let $A$ be a connection on $M$, which is in cylindrical
form in a collar of $T$, that is $A = i^*_u a$, where $i_u\co T \hookrightarrow [-1,1] \times T$ is
the inclusion at $u \in [-1,1]$ and $a$ a connection on $T$. We write the restriction of
  $\Omega^{0+1}([-1,1]\times T;su(2))$ to $T$ as
\begin{eqnarray*} r\co \Omega^{0+1}([-1,1]\times T;su(n)) & \to &
  \Omega^{0+1+2}(T;su(n))\\
(\sigma,\tau) & \mapsto & \left(i_0^*(\sigma),i^*_0(\tau),*i_0^*\left(\tau
\contract
  \tfrac{\partial}{\partial u}\right)\right),
\end{eqnarray*} where $\tau\contract \frac{\partial}{\partial u}$ denotes
  contraction of $\tau$ with $\frac{\partial}{\partial u}$, and $*$ is
the Hodge star on differential forms on the $2$--manifold $T$.
This also gives us a restriction map of $\Omega^{0+1}(X;su(n))$ and  $\Omega^{0+1}(Y;su(n))$ to
$\Omega^{0+1+2}(T;su(n))$. The {\em Cauchy data spaces} of $D_A|_X$
and $D_A|_Y$ are
$$\Lambda_{X,A}:=  \overline{r(\ker D_A|_X)}^{L^2} \quad \text{and}
\quad \Lambda_{Y,A}:=  \overline{r(\ker D_A|_Y)}^{L^2},\quad  \text{respectively},$$
with the corresponding {\em limiting values of extended
  $L^2$-solutions}
$$\cL_{X,A} := \text{proj}_{\ker S_a}
(  \Lambda_{X,A} \cap (P^- \cup \ker S_a)) \quad \text{and}\quad \cL_{Y,A} := \text{proj}_{\ker S_a}
(  (P^+ \cup \ker S_a) \cap \Lambda_{Y,A}) .$$

Let $R(T,SU(n))$ be the representation variety of $T$, namely the space  of conjugacy classes of
representations  $\varphi \co \pi_1(T) \to SU(n).$ By  \cite[Proposition
2.2.3]{DK}, the holonomy map gives a
homeomorphism from  the moduli space $\sM_T$ of
flat $SU(n)$ connections over $T$ to the representation variety $R(T,SU(n))$.

Let $\La := \{ \al=(\al_1\ldots, \al_n) \in \R^n \mid
\al_1 + \cdots + \al_n = 0\} $,  which is  isomorphic to $\R^{n-1}$ via the standard projection onto the first $n-1$ coordinates. For $\al \in \La,$  set
$$\diag(\al)=\begin{pmatrix} \al_1 & & 0 \\
& \ddots \\
0 && \al_n \end{pmatrix}.$$

\begin{defn}\label{Def2.1}
For $\al,\be \in \La,$ let
$a_{\al,\be} := - i \diag(\al) \, dm - i \diag(\be)\, d\ell$. We substitute an index $a_{\al,\be}$ by
$(\al,\be)$, for example $S_{\al,\be} =
S_{a_{\al,\be}}$, $\Delta_{\al,\be} =
\Delta_{a_{\al,\be}}$, etc..
\end{defn}
Notice that   $a_{\al,\be}$ is a flat connection  on $T$ with holonomy
$\hol(a_{\al,\be})$ equal to the representation $\varphi_{\al,\be} \co \pi_1 (T)  \to  SU(n)$
given by $\varphi_{\al,\be} (\mu) = \exp(2 \pi i \, \diag(\al))$ and
$\varphi_{\al,\be} (\la)  =\exp(2 \pi i \, \diag(\be))$.
The map $(\al,\be)\mapsto a_{\al,\be}$
defines a smooth family   of flat connections parameterized by $\La^2$,
and the map $(\al,\be)\mapsto
[\varphi_{\al,\be}]$ gives a branched cover $\La^2 \to R(T,SU(n))$.

Under the action of the standard maximal torus $T^{n-1} \subset
SU(n),$ the Lie algebra decomposes as $su(n)=U_n \oplus W_n$ into
diagonal and off-diagonal parts. The torus acts trivially on the
diagonal part $U_n \cong \R^{n-1}$, and nontrivially on the
off-diagonal part $W_n$, which further decomposes as $W_n =
\bigoplus\limits_{i<j} C^{ij}$, where
$$C^{ij} := \{a \in su(n) \mid a_{kl} = 0 \text{ for }
\{k,l\}\neq\{i,j\}  \}\cong \C.$$

Moreover, the operators $S_{\al,\be}$ and $\Delta_{\al,\be}$
preserve the induced splitting of $\Om^{0+1+2}(T;su(n))$. Therefore,
all further computations regarding our boundary conditions can be
done for $U_n \cong \R^{n-1}$   and $W_n = \bigoplus\limits_{i<j}
C^{ij}$ by effectively reducing them to the computations done in
\cite{H}. Notice that $W_n \cong \C^{{n\choose 2}}.$

For $i<j$, we define subsspaces
\begin{equation} \label{Eq2.1}
Q_{\al,\be}^{ij} = \{ c \phi \mid c \in \C\} \subset  \Om^0(T;su(n))
\end{equation} 
where $\phi = (\phi_{kl})  \in \Om^0(T;su(n))$ is given by
\begin{equation} \label{Eq2.2}
\phi_{kl}(m,\ell)= \begin{cases}   e^{i((\al_i-\al_j) m + (\be_i-\be_j)\ell)} &\text{ if $(k,l)=(i,j)$},\\
-e^{i((\al_j-\al_i) m + (\be_j-\be_i)\ell)} &\text{ if $(k,l)=(j,i)$},\\
0 &\text{ otherwise.} \end{cases}
\end{equation}
We set 
\begin{equation} \label{Eq2.3}
Q_{\al,\be} = \bigoplus_{i<j} Q_{\al,\be}^{ij}.
\end{equation}

For a proof of the next result,  see
\cite[Proposition 3.1.2]{H}.

\begin{prop} \label{Prop2.2}
We have for the harmonic
forms of $\Delta_{\al,\be}$ on the torus: $$
\sH^{0+1+2}_{\al,\be}(T;su(n)) =
\sH^{0+1+2}_{\al,\be}(T;U_n)\oplus \sH^{0+1+2}_{\al,\be}(T;W_n).$$
In the first case, we have trivially that $$\sH^i_{\al,\be}(T;U_n) = \begin{cases}
U_n, & \text{if $i=0,$} \\
U_n \, dm \oplus U_n \, d\ell, & \text{if $i=1$, and} \\
 U_n\, dm \wedge d\ell, & \text{if $i=2.$}
 \end{cases}$$
In the second case, we
have
$$ \sH^{0+1+2}_{\al,\be}(T;W_n) = \bigoplus_{i<j}\sH^{0+1+2}_{\al,\be}(T;
C^{ij}),
$$
with  
\begin{eqnarray*}
\sH^0_{\al,\be}(T;C^{ij}) & = &
\begin{cases}
Q_{\al,\be}^{ij} & \text{if $(\al_i-\al_j,\be_i-\be_j)\in \Z^2$,}\\
0 & \text{otherwise},
\end{cases}\\
\sH^1_{\al,\be}(T;C^{ij}) & = &
\begin{cases}
Q_{\al,\be}^{ij}\, dm \oplus Q_{\al,\be}^{ij} \, d\ell & \text{if $(\al_i-\al_j,\be_i-\be_j)\in \Z^2$,}\\
0 & \text{otherwise},
\end{cases}
\\
\sH^2_{\al,\be}(T;C^{ij}) & = &
\begin{cases}
Q_{\al,\be}^{ij} \, dm \wedge d\ell& \text{if $(\al_i-\al_j,\be_i-\be_j)\in \Z^2$,}\\
0 & \text{otherwise}.
\end{cases}
\end{eqnarray*}
\end{prop}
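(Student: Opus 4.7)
The proof proceeds by block-diagonalizing with respect to the weight decomposition of $su(n)$. Since $a_{\al,\be} = -i\diag(\al)\,dm - i\diag(\be)\,d\ell$ takes values in the Cartan subalgebra $U_n$, its adjoint action preserves each of the summands in $su(n) = U_n \oplus \bigoplus_{i<j} C^{ij}$, so the twisted differential $d_{a_{\al,\be}}$, its formal adjoint, and hence $\Delta_{\al,\be}$ all split along the induced decomposition of $\Om^{0+1+2}(T;su(n))$. Consequently
\[
\sH^{0+1+2}_{\al,\be}(T;su(n)) \;=\; \sH^{0+1+2}_{\al,\be}(T;U_n) \;\oplus\; \bigoplus_{i<j}\sH^{0+1+2}_{\al,\be}(T;C^{ij}),
\]
and it suffices to analyze each summand separately.

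On the Cartan summand $U_n$ the adjoint action of $a_{\al,\be}$ vanishes, so $\Delta_{\al,\be}$ restricts to the ordinary flat-torus Hodge Laplacian tensored with $\Id_{U_n}$. The standard description of harmonic forms on a flat torus with constant coefficients then gives the asserted formulas for $\sH^k_{\al,\be}(T;U_n)$ immediately.

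For each summand $C^{ij}$, identified with $\C$ via its $(i,j)$-entry, the operator $\ad(a_{\al,\be})$ acts as multiplication by the scalar-valued 1-form $-i(\al_i-\al_j)\,dm - i(\be_i-\be_j)\,d\ell$. Hence $d_{a_{\al,\be}}$ restricts to the twisted de~Rham differential of the flat complex line bundle on $T$ whose holonomy representation sends $\mu \mapsto e^{2\pi i(\al_i-\al_j)}$ and $\la \mapsto e^{2\pi i(\be_i-\be_j)}$. When $(\al_i-\al_j,\be_i-\be_j)\notin \Z^2$ this holonomy is nontrivial, so the twisted cohomology of $T$ vanishes in every degree and there are no nonzero harmonic forms. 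When $(\al_i-\al_j,\be_i-\be_j)\in \Z^2$, the function $\phi$ of (\ref{Eq2.2}) is single-valued on $T$, and pointwise multiplication by $\phi$ intertwines $d_{a_{\al,\be}}|_{C^{ij}}$ with the ordinary exterior derivative. The harmonic spaces are therefore obtained by multiplying the standard harmonic forms $1$, $dm$, $d\ell$, $dm\wedge d\ell$ by the generator of $Q^{ij}_{\al,\be}$, producing exactly the formulas in the statement.

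The principal technical point is to keep track of the skew-hermitian constraint $\phi_{ji} = -\overline{\phi_{ij}}$ inherent in the identification $C^{ij}\cong \C$, so that (\ref{Eq2.2}) correctly parameterizes $su(n)$-valued (respectively complexified) sections in the form used later, and to check that the gauge transformation by $\phi$ is an isometry for the Hodge inner product. Once those compatibilities are verified, no further input beyond standard Hodge theory on the flat torus is required.
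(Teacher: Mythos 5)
Your proof is correct and follows exactly the approach the paper has in mind: the paper itself does not give a proof of Proposition~\ref{Prop2.2} but simply cites \cite[Proposition 3.1.2]{H}, and the surrounding text makes clear that the intended argument is precisely the weight-space decomposition $su(n)=U_n\oplus\bigoplus_{i<j}C^{ij}$ reducing each $C^{ij}$ to the $su(2)$ line-bundle computation of \cite{H}. You supply the self-contained version of that reduction: since $a_{\al,\be}$ is Cartan-valued, $d_{a_{\al,\be}}$ and $\Delta_{\al,\be}$ block-diagonalize; on $U_n$ the connection acts trivially giving the untwisted Hodge theory of the flat torus; on $C^{ij}\cong\C$ the operator is the twisted de Rham differential of the flat complex line bundle with holonomy $\mu\mapsto e^{2\pi i(\al_i-\al_j)}$, $\la\mapsto e^{2\pi i(\be_i-\be_j)}$, whose cohomology vanishes for nontrivial holonomy and is carried isomorphically onto untwisted cohomology by multiplication by the unimodular gauge function $\phi_{ij}$ when $(\al_i-\al_j,\be_i-\be_j)\in\Z^2$. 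The closing remarks about the skew-hermitian constraint $\phi_{ji}=-\overline{\phi_{ij}}$ and unitarity of the gauge change correctly address the only points one needs to verify; no gap.
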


Let $a$ be a $SU(n)$ connection on $T$ and
$E_{a,\nu}$ denote the $\nu$--eigenspace of $S_a$.
For $\nu>0$, we set
\begin{eqnarray*}
P^+_{a,\nu} & := & \text{span}_{L^2} \left\{\psi \mid S_a \psi = \mu \psi \text{ for } \mu> \nu \right\}, \\
 P^-_{a,\nu} & := & \text{span}_{L^2} \{\psi \mid S_a \psi = \mu  \psi \text{ for }\mu< -\nu \}, \\
 E^+_{a,\nu} &:=&\text{span}_{L^2} \{\psi \mid S_a \psi = \mu  \psi \text{ for }0<\mu\le \nu \}, \text{ and}\\
E^-_{a,\nu} &:=&\text{span}_{L^2} \{\psi \mid S_a \psi = \mu  \psi \text{ for }-\nu\le \mu<0\}.
\end{eqnarray*}
Notice that $$P^\pm_{a,\nu}  :=   \overline{\bigoplus_{\pm\mu >\nu} E_{a,\mu}}^{L^2}
 \quad\text{ and }
\quad E^\pm_{a,\nu} := \bigoplus_{0< \pm \mu  \le \nu} E_{a,\mu}.$$

If $\nu=0$, we write $P^\pm_a$ in place of  $P^\pm_{a,0}$,
and if $\al,\be \in \La,$ we write $P^\pm_{\al,\be}$ in place of $P^\pm_{a_{\al,\be}}$.
Define $P^{ij\pm}_{\al,\be} := P^\pm_{\al,\be}
\cap L^2(\Om^{0+1+2} (T;C^{ij}))$. Observe that the space of twisted harmonic   forms $\sH^{0+1+2}_a(T;su(n))$ in $L^2(\Om^{0+1+2}(T,su(n)))$
is equal to $\ker S_a$.
By the spectral theorem for
self-adjoint elliptic operators we have
\begin{equation*}\label{splittingP}
L^2(\Om^{0+1+2}(T,su(n))) = P^+_a \oplus \ker S_a \oplus P^-_a.
\end{equation*}
In analogy to \cite[Proposition 3.2.3]{H} we get a decomposition
of $L^2(\Omega^{0+1+2}(T, su(n)))$ into eigenspaces of $\Delta_{\al,\be}$
respecting the decompositions $su(n) = U_n \oplus W_n$ and $W_n = \bigoplus\limits_{i<j} C^{ij}$.
Further note that the decomposition of $L^2(\Omega^{0+1+2}(T, U_n))$ is independent of $(\al,\be)$ and the
decomposition
of $L^2(\Omega^{0+1+2}(T, C^{ij}))$
depends only on $(\al_i-\al_j,\be_i-\be_j) \in \R^2$.
The dimension of
$\ker S_{\al,\be}$ jumps whenever
$(\al_i-\al_j,\be_i-\be_j)$ intersects the integer lattice $\Z^2 \subset \R^2$ for some $i < j$.
We set \begin{align*}
\cZ_{ij}&:= \{(\al,\be) \in \La^2 \mid  (\al_i-\al_j,\be_i -\be_j) \in \Z^2\},\\
\cZ &:= \bigcup_{i<j} \cZ_{ij}.
\end{align*}
By explicitly computing a path of
eigenfunctions with nonzero eigenvalue, which vanish in the limit,
we can see that the additional eigenspace in the kernel of the
tangential operator only depends on the direction in which
$ (\al_i-\al_j,\be_i -\be_j)$ approaches $\Z^2$. For $i<j$, we denote this angle by  $\th_{ij} \in S^1$,
and we introduce the parameter space $$\widetilde{\La^2}:= \La^2 \times (S^1)^{{n \choose
      2}}/\sim,$$
      where the equivalence relation collapses the $(ij)$ circle away from $\cZ_{ij},$ i.e.
      for $\th = (\th_{ij})_{i<j} \in (S^1)^{{n \choose 2}}$, we have
      $$(\al,\be,\th) \sim
  (\al,\be,\th') \text{ provided $\th_{ij}=\th'_{ij}$ for all $i<j$ with $(\al,\be) \in \cZ_{ij}$.} $$

We  put a topology on $\widetilde{\La^2}$ as follows. Given $(\al,\be) \in \La^2$ and $i<j$,
set $\al_{ij} = \al_i - \al_j$ and $ \be_{ij} = \be_i - \be_j$.
Then $(\al_{ij}, \be_{ij})_{i<j} \in  (\R^2)^{{n \choose 2}}$. Set $\Om^2=(\R^2)^{{n \choose 2}}$
for notational convenience, and notice that the map $\La^2 \to \Om^2$
given by $(\al,\be) \mapsto (\al_{ij},\be_{ij})_{i<j}$ is an embedding. As before, define
$$\widetilde{\Om^2} =\Om^2 \times (S^1)^{{n \choose 2}}/\sim,$$
where the equivalence relation collapses the $(ij)$ circle for $(\al_{ij},\be_{ij}) \not\in \Z^2.$
Just as on p.~2275 of \cite{H}, there is a bijective map from
$\widetilde{\Om^2}$ to $(\dot{\R^2})^{{n \choose 2}}$,
where $\dot{\R^2}$ is the result of removing open disks of radius $1/4$ around each integer lattice point in $\R^2$, and we put a topology on $\widetilde{\Om^2}$ that makes this map a homeomorphism.
The embedding $\La^2 \times (S^1)^{{n \choose 2}} \to\Om^2 \times (S^1)^{{n \choose 2}}$
 factors through to give an injective map
$\widetilde{\La^2} \to \widetilde{\Om^2}$, and in this way $\widetilde{\La^2}$ inherits a topology from
 $\widetilde{\Om^2}$ as a subspace.

The next result is analagous to \cite[Theorem 3.2.2]{H}.
Before stating it, we define  families
$K^{\pm}_{(\al,\be,\th)} = \bigoplus\limits_{i<j} K^{ij\pm}_{(\al,\be,\th)}$ of subspaces of
$\sH_{(\al,\be)}^{0+1+2}(T;su(n))$ parameterized by  $\widetilde{\La^2} $ by setting, for each $i<j,$
$$ K^{ij\pm}_{(\al,\be,\th)} = \begin{cases} {\rm span}_\C   \{ \psi_1^\pm
,\psi_2^\pm \}  & \text{if $(\al,\be) \in \cZ_{ij},$} \\
0 & \text{otherwise,} \end{cases}$$
where
$$\psi^\pm_1 = \phi(1\mp(i \im \th_{ij} \, dm- i \re
\th_{ij}  \, d\ell)),$$
$$\psi_2^\pm = \phi( dm \wedge d\ell \pm( i
\re \th_{ij}  \, dm + i \im \th_{ij} \, d\ell)),$$
and  $\phi \in \Om^0(T;su(n))$ is the function given by equation (\ref{Eq2.2}).

\begin{thm}\label{Thm2.3}
\begin{enumerate}
\item The maps $P^\pm  \co  \La^2 \setminus \cZ \to
  \{\text{closed subspaces of }L^2(\Omega^{0+1+2}(T, su(n))) \}$ are continuous.

\item If   $(\al(t),\be(t))\in \La^2$ is a smooth path   with $(\al(t),\be(t)) \notin \cZ_{ij}$
for $t\in (0,\varepsilon)$ such that $\left.\frac{d}{dt}\right|_{t=0}\left(\al_{ij}(t) +i\be_{ij}(t)\right) \neq 0,$ we set
$$\th_{ij} =\frac{\al'_{ij}(0) +i\be'_{ij}(0) }{\|\al'_{ij}(0) +i\be'_{ij}(0)\|}.$$
Then
$$\lim\limits_{t\to  0^+} P^{ij+}_{(\al(t),\be(t))} = K^{ij+}_{(\al,\be,\th)} \oplus P^{ij+}_{(\al,\be)}
\quad \text{ and } \quad
\lim\limits_{t\to  0^+} P^{ij-}_{(\al(t),\be(t))} = K^{ij-}_{(\al,\be,\th)} \oplus P^{ij-}_{(\al,\be)}.$$

\item  Extend $P^\pm$  to $\widetilde{\La^2}$ by setting $P^\pm_{(\al,\be,\th)} = P^\pm_{(\al,\be)}$,
then
$$P^\pm \oplus K^\pm\co
 \widetilde{\La^2} \to
  \{\text{closed subspaces of }L^2(\Omega^{0+1+2}(T, su(n))) \}$$ are continuous.
 \end{enumerate}\qed
\end{thm}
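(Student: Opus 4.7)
The plan is to reduce the theorem to its rank-one counterpart \cite[Theorem 3.2.2]{H} by exploiting the decomposition $su(n) = U_n \oplus \bigoplus_{i<j} C^{ij}$, which is preserved by $S_{\al,\be}$ and hence by every spectral projection $P^\pm_{(\al,\be)}$. The first step is to split $L^2(\Om^{0+1+2}(T;su(n)))$ accordingly and treat each summand independently.

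On the diagonal part $U_n \cong \R^{n-1}$, the maximal torus acts trivially, so $S_{\al,\be}|_{L^2(\Om^{0+1+2}(T;U_n))}$ is the untwisted de Rham operator tensored with $\R^{n-1}$. Its spectral decomposition, and in particular the projections $P^\pm(U_n)$, is independent of $(\al,\be)$, so this summand contributes trivially to continuity. On each off-diagonal complex line $C^{ij}$, the restricted operator depends only on the pair $(\al_{ij},\be_{ij}) := (\al_i-\al_j,\be_i-\be_j) \in \R^2$, and after identifying $C^{ij} \cong \C$ it agrees verbatim with the operator studied in \cite{H}. Consequently \cite[Theorem 3.2.2]{H} supplies: (i) continuity of $P^{ij\pm}$ on $\R^2 \setminus \Z^2$, and (ii) for any smooth path $(\al_{ij}(t),\be_{ij}(t))$ approaching a lattice point with nonzero tangent of unit direction $\th_{ij}$, the limit identity
\[
\lim_{t\to 0^+} P^{ij\pm}_{(\al(t),\be(t))} = K^{ij\pm}_{(\al,\be,\th)} \oplus P^{ij\pm}_{(\al,\be)},
\]
where the two extra vectors $\psi_1^\pm,\psi_2^\pm$ arise as $t\to 0^+$ limits of eigenfunctions whose eigenvalues tend to $0$.

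Parts (1) and (2) then follow by applying the rank-one statement componentwise: if $(\al,\be)\notin\cZ$, then $(\al_{ij},\be_{ij})\notin\Z^2$ for every $i<j$, so each $P^{ij\pm}$ is continuous at $(\al,\be)$ and their direct sum is also continuous; and if a path approaches a single stratum $\cZ_{ij}$ transversely, only that one summand jumps. For part (3) the task is to verify that the topology on $\widetilde{\La^2}$, inherited as a subspace from $\widetilde{\Om^2}$ and, via the homeomorphism of \cite[p.~2275]{H}, from $(\dot{\R^2})^{{n\choose 2}}$, is exactly the one that makes each factor map $(\al_{ij},\be_{ij},\th_{ij})\mapsto P^{ij\pm}\oplus K^{ij\pm}$ continuous. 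Since $P^\pm\oplus K^\pm$ is the orthogonal direct sum of these factor maps with the constant $U_n$-contribution, joint continuity follows from componentwise continuity.

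The main obstacle is the bookkeeping at points where several strata $\cZ_{ij}$ meet simultaneously. At such a multi-degenerate $(\al,\be)$ the equivalence relation defining $\widetilde{\La^2}$ retains the angle $\th_{ij}$ only for those indices with $(\al,\be)\in\cZ_{ij}$, and one must confirm two things: first, that a sequential limit in $\widetilde{\La^2}$ forces the corresponding image in each $(\dot{\R^2})$-factor to converge; and second, that the componentwise limits reassemble to $K^\pm_{(\al,\be,\th)}\oplus P^\pm_{(\al,\be)}$ independently of the collapsed circle coordinates. Both points are formal consequences of the fact that $S_{\al,\be}$ respects the $C^{ij}$-decomposition, but they require care because the circle factors for $(ij)$ with $(\al,\be)\notin\cZ_{ij}$ have been quotiented out; the verification reduces to checking that the quotient maps $\widetilde{\La^2}\to\prod_{i<j}\dot{\R^2}$ induced by the individual factors are continuous, after which the rank-one case of \cite{H} supplies all remaining analytic content.
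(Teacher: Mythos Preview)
Your proposal is correct and follows exactly the approach the paper intends: the paper states the theorem as ``analogous to \cite[Theorem 3.2.2]{H}'' and marks it with a \qed, relying on the preceding setup (the $S_{\al,\be}$-invariant decomposition $su(n) = U_n \oplus \bigoplus_{i<j} C^{ij}$, the dependence of the $C^{ij}$-block only on $(\al_{ij},\be_{ij})$, and the construction of $\widetilde{\La^2}$ via the embedding into $\widetilde{\Om^2} \cong (\dot{\R^2})^{{n\choose 2}}$) to make the reduction to the rank-one case transparent. Your write-up simply spells out this reduction in detail, including the bookkeeping at multi-degenerate points, which the paper leaves implicit.
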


Then, we can define a continuous family of
boundary conditions parametrized by $\widetilde{\La^2}$ in analogy to \cite[Definition 3.2.4]{H}.
\begin{defn}\label{Def2.4}
Define a family
$\APS^\pm$ of subspaces of $L^2(\Omega^{0+1+2}(T, su(n)))$ continuously para\-met\-rized by
$\tilde\varrho \in\widetilde{\La^2}$ as
$$
\APS_{\tilde\varrho}^\pm := P^\pm_{\tilde\varrho} \oplus \hat \sL^\pm \oplus
K^\pm_{\tilde\varrho},
$$
where $$\hat\sL^-:= U \oplus U \, d\ell \quad\text{and}\quad \hat \sL^+ := J
\hat\sL^-.$$ The space $\hat\sL^{\pm}$ can be chosen arbitrarily--the
proof of the splitting formula does not make use of it--but the
above choice makes computations for our application easier.
\end{defn}

If $L_{1,t}$, $L_{2,t}$ and $L_{3,t}$, $t\in[0,1]$ are paths of Lagrangian subspaces in a symplectic
Hilbert space with almost complex structure $J$, such that $(J L_{i,t}, L_{j,t})$ is a Fredholm pair
for all $i,j = 1,2,3$, $t\in[0,1]$, then we can define a Maslov triple
index $\tau_\mu$ by translating \cite[Definition
6.8]{KL} into the language of Lagrangian subspaces. By the proof of \cite[Lemma
6.10]{KL} we see that $\tau_\mu$ is determined by
$\tau_\mu(L,L,L)=0$ and
$$
\tau_\mu(L_{1,1},L_{2,1},L_{3,1}) - \tau_\mu(L_{1,0},L_{2,0},L_{3,0})=
\Mas(J L_1,L_2)+\Mas(J L_2, L_3) - \Mas(J L_1, L_3).
$$

Some easy and useful properties are summarized in the following.
\begin{lem}\label{Lem2.5}
Let $L_1, L_2,$ and $ L_3$ be pairwise Fredholm Lagrangians in a
Hilbert space $H$. Then
\begin{itemize}
\item $\tau_\mu(L_1,L_1,L_2) = \tau_\mu(L_1,L_2,L_2) = 0$,
\item $\tau_\mu(L_1,L_2,L_1) = \dim(J L_1 \cap L_2)$, and
\item $\tau_\mu(L_1,L_2,L_3)  = \dim(J L_2 \cap L_3) - \tau_\mu(L_1,L_3,L_2).$
\end{itemize}
\end{lem}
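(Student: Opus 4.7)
The plan is to derive all three identities from the characterization
\begin{equation*}
\tau_\mu(L_{1,1},L_{2,1},L_{3,1}) - \tau_\mu(L_{1,0},L_{2,0},L_{3,0}) = \Mas(J L_1,L_2)+\Mas(J L_2, L_3) - \Mas(J L_1, L_3)
\end{equation*}
together with the normalization $\tau_\mu(L,L,L)=0$, by evaluating the formula along carefully chosen homotopies. The inputs from the general theory of the Maslov index that I will use are three: the index of a pair of constant paths vanishes; reversing a path negates the index; and $\Mas(JL_t,L_t)=0$ for any path $L_t$, since the compatibility of $J$ with the symplectic form makes $JL_t$ the orthogonal complement of $L_t$ so that $JL_t\cap L_t=0$ at every time.

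To verify $\tau_\mu(L_1,L_1,L_2)=0$, I would connect $(L_1,L_1,L_1)$ to $(L_1,L_1,L_2)$ by holding the first two slots constant at $L_1$ and running any path from $L_1$ to $L_2$ in the third slot. In the cobordism formula the first Maslov term vanishes because both of its paths are constant, while the second and third terms coincide and cancel. The vanishing $\tau_\mu(L_1,L_2,L_2)=0$ is analogous: I would connect $(L_1,L_1,L_1)$ to $(L_1,L_2,L_2)$ by holding $L_1$ fixed in the first slot and running the \emph{same} path $L_t$ from $L_1$ to $L_2$ in slots two and three, so that the first and third Maslov terms coincide and cancel while the middle term $\Mas(JL_t,L_t)$ vanishes for the reason just given.

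For the swap identity $\tau_\mu(L_1,L_2,L_3) + \tau_\mu(L_1,L_3,L_2) = \dim(JL_2\cap L_3)$, I would first apply the cobordism formula to the base triple $(L_1,L_2,L_2)$ with third slot running along a path $L_t$ from $L_2$ to $L_3$; this gives $\tau_\mu(L_1,L_2,L_3) = \Mas(JL_2,L_t) - \Mas(JL_1,L_t)$. A parallel computation starting from $(L_1,L_3,L_3)$ with the reversed path $L_{1-t}$ in the third slot yields $\tau_\mu(L_1,L_3,L_2) = \Mas(JL_1,L_t) - \Mas(JL_3,L_t)$ after using that reversal negates the Maslov index. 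The $L_1$-dependent terms cancel in the sum, reducing the identity to the purely Maslov-theoretic statement $\Mas(JL_2,L_t) - \Mas(JL_3,L_t) = \dim(JL_2\cap L_3)$. To prove this I would introduce the two-parameter family $(JL_u, L_v)$ on $[0,1]^2$ whose Maslov index around the boundary vanishes, and then, after arranging generic interior crossings, identify the only surviving contributions as endpoint jumps at the two non-transverse corners, which together produce $\dim(JL_2\cap L_3)$. This endpoint analysis, carried out in the Kirk--Lesch convention for Maslov indices at non-transverse endpoints, is the step I expect to be the main obstacle.

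Finally, the second identity follows from the third by specializing $L_3 = L_1$: using $\tau_\mu(L_1,L_1,L_2)=0$ already established gives $\tau_\mu(L_1,L_2,L_1) = \dim(JL_2 \cap L_1)$, and $J$ restricts to an isomorphism between $JL_2 \cap L_1$ and $L_2 \cap JL_1 = JL_1 \cap L_2$ (using $J^2=-\Id$ and that Lagrangians are closed under negation), yielding the claimed $\dim(JL_1\cap L_2)$.
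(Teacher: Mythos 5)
The paper states this lemma without proof, so there is no argument of the authors' to compare against; what follows is an assessment of your proposal on its own terms.

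Your argument for the first bullet is complete and correct: in the cobordism formula applied to $(L_1,L_1,L_t)$ the outer two Maslov terms coincide and the remaining one is a constant pair, while for $(L_1,L_t,L_t)$ the outer terms cancel and the middle term vanishes because $JL_t\cap L_t=0$ throughout. Your reduction of the remaining two bullets to the single Maslov identity $\Mas(JL_2,L_t)-\Mas(JL_3,L_t)=\dim(JL_2\cap L_3)$ (for any path $L_t$ from $L_2$ to $L_3$) is also sound, granted that the Kirk--Lesch Maslov index is additive under concatenation and hence changes sign under path reversal, and your deduction of the second bullet from the third by setting $L_3=L_1$ is fine.

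The genuine gap is the Maslov identity itself, which you flag as ``the main obstacle.'' The mechanism you sketch for it does not work: because the Kirk--Lesch Maslov index is additive under concatenation and invariant under homotopy rel endpoints, the index of the contractible boundary loop of the square $(u,v)\mapsto(JL_u,L_v)$ is zero, and the four edge contributions sum to zero. This yields only the tautological relation $\Mas(JL_2,L_t)-\Mas(JL_3,L_t)=\Mas(JL_u,L_2)-\Mas(JL_u,L_3)$ and produces no ``surviving endpoint jumps at the corners'' --- there is nothing in the two-parameter picture left over to identify with $\dim(JL_2\cap L_3)$. What is actually missing is the specific behaviour of the Kirk--Lesch index at non-transverse endpoints, namely an identity of the form $\Mas(M,\gamma)+\Mas(\gamma,M)=\dim(M\cap\gamma_1)-\dim(M\cap\gamma_0)$ for a fixed Lagrangian $M$ and a path $\gamma$; this is where the asymmetric endpoint convention of \cite{KL} enters and where all the content of the second and third bullets lives. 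With it, the cleanest route is to prove the second bullet directly --- interpolate $(L_1,L_1,L_1)$ to $(L_1,L_2,L_1)$ via $(L_1,\gamma_t,L_1)$ with $\gamma$ from $L_1$ to $L_2$, getting $\tau_\mu(L_1,L_2,L_1)=\Mas(JL_1,\gamma)+\Mas(J\gamma,L_1)$, then use $\Mas(J\gamma,L_1)=\Mas(\gamma,JL_1)$ (apply $J$ to both slots) and the endpoint identity with $M=JL_1$ --- and then derive the third bullet from the second and first by interpolating the first slot, rather than deriving the second from the third as you do. As written, your proposal has the logical skeleton right but lacks the one endpoint formula that carries the weight of the proof, and the route you propose to obtain it cannot supply it.
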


\begin{thm} \label{Thm2.6} Let $M = X \cup_T  Y$ be a closed 3-manifold split along the torus $T$. Let $A_t$ be a path
of $SU(n)$ connections on $M$ with the following properties:
\begin{enumerate}
\item $A_t$ is in cylindrical form and flat in a collar of $T$.
\item $A_t$ restricts to the path $a_{\varrho(t)}$ on $T$ for
  some path $\tilde\varrho$ in $\widetilde{\La^2}$ with
$ \pi\circ \tilde\varrho = \varrho$, where $\pi\co
 \widetilde{\La^2} \to
\La^2$ is the obvious projection, and
\item $A_0$ and $A_1$ are flat on $M$.
\end{enumerate}
Then we have the splitting formula:
\begin{eqnarray*}
\SF(A_t) &=
&\SF(A_t|_X;\APS^+_{\tilde\varrho(t)}) +
\SF(A_{t}|_Y;\APS^-_{\tilde\varrho(t)})  +\tau_\mu(J\sL_{X,\varrho(0)}, K^+_{\tilde\varrho(0)} \oplus \hat\sL^+, \sL_{Y,\varrho(0)})  \\
&&{}-
\tau_\mu(J\sL_{X,\varrho(1)}, K^+_{\tilde\varrho(1)} \oplus \hat \sL^+, \sL_{Y,\varrho(1)}).\end{eqnarray*}
\end{thm}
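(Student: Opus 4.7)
The strategy is to follow the template of \cite[Theorem 4.3]{H}, which handles the analogous $SU(2)$ statement, and to reduce the $SU(n)$ case to it using the splitting $su(n) = U_n \oplus \bigoplus_{i<j} C^{ij}$. Since the tangential operator $S_{\al,\be}$ preserves this decomposition and the boundary conditions $\APS^\pm_{\tilde\varrho(t)}$ are built block by block from pieces on each summand, the splitting formula can in principle be verified one summand at a time. The diagonal $U_n$ part contributes no correction because it is $(\al,\be)$-independent, while each off-diagonal $C^{ij}$ block is precisely of the $SU(2)$ type analyzed in \cite{H}, with $\th_{ij}$ recording the direction in which the path crosses $\cZ_{ij}$.

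The first analytic input is Nicolaescu's general splitting of spectral flow as a Maslov index of Cauchy data spaces,
$$\SF(A_t) = \Mas\bigl(\La_{X,A_t},\, \La_{Y,A_t}\bigr),$$
which applies because $A_t$ is in cylindrical and flat form near $T$ by hypothesis (1). The second is the Kirk--Lesch identification of spectral flow on a manifold with boundary, equipped with a continuously varying APS-type boundary Lagrangian, as a Maslov index against that Lagrangian:
\begin{align*}
\SF\bigl(A_t|_X;\APS^+_{\tilde\varrho(t)}\bigr) &= \Mas\bigl(\La_{X,A_t},\, \APS^-_{\tilde\varrho(t)}\bigr), \\
\SF\bigl(A_t|_Y;\APS^-_{\tilde\varrho(t)}\bigr) &= \Mas\bigl(\APS^-_{\tilde\varrho(t)},\, \La_{Y,A_t}\bigr),
\end{align*}
where one first checks the identity $J\APS^+ = \APS^-$ from $\hat\sL^+ = J\hat\sL^-$ and the $J$-action on the explicit generators of $K^+_{\tilde\varrho}$. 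The continuity of $\APS^\pm$ as $\tilde\varrho(t)$ varies, including through the crossings of $\cZ$, is exactly what Theorem \ref{Thm2.3} provides, and it is what makes all these Maslov indices well-defined and Fredholm.

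The third step combines these by inserting $\APS^-_{\tilde\varrho(t)}$ as an intermediate Lagrangian between $\La_{X,A_t}$ and $\La_{Y,A_t}$ and invoking the cocycle relation that is the content of the identity preceding Lemma \ref{Lem2.5}, which gives
$$\Mas(\La_{X,A_t},\La_{Y,A_t}) = \Mas(\La_{X,A_t},\APS^-_{\tilde\varrho(t)}) + \Mas(\APS^-_{\tilde\varrho(t)},\La_{Y,A_t}) + \tau_\mu\bigl(\La_{X,A},\APS^-_{\tilde\varrho},\La_{Y,A}\bigr)\Big|_{t=0} - \tau_\mu\bigl(\La_{X,A},\APS^-_{\tilde\varrho},\La_{Y,A}\bigr)\Big|_{t=1}.$$
By hypothesis (3) the connection is flat at $t=0,1$, so the Cauchy data spaces decompose as $\La_{X,A_j} = P^-_{\varrho(j)} \oplus \sL_{X,\varrho(j)}$ and $\La_{Y,A_j} = P^+_{\varrho(j)} \oplus \sL_{Y,\varrho(j)}$ with $\sL_{X,\varrho(j)},\sL_{Y,\varrho(j)} \subset \ker S_{\varrho(j)}$. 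Applying Lemma \ref{Lem2.5} repeatedly to peel off the $P^\pm$ and $\hat\sL^\pm$ summands in the triple index (which pair trivially with one another under $J$) then reduces the endpoint contribution to the finite dimensional triple $(J\sL_{X,\varrho(j)},\,K^+_{\tilde\varrho(j)}\oplus \hat\sL^+,\,\sL_{Y,\varrho(j)})$, yielding the claimed formula.

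The principal obstacle is the very thing that motivated the construction of $\widetilde{\La^2}$ and $K^\pm_{\tilde\varrho}$ in the first place: at crossings of $\cZ$ the dimension of $\ker S_{\varrho(t)}$ jumps, and the naive spectral projections $P^\pm_{\varrho(t)}$ fail to form a continuous family, so neither the Kirk--Lesch formula nor the cocycle identity is directly applicable. Theorem \ref{Thm2.3} is precisely the tool that upgrades $P^\pm$ to the continuous family $P^\pm \oplus K^\pm_{\tilde\varrho}$ on $\widetilde{\La^2}$, and once this continuity is in hand the proof reduces to the bookkeeping described above, carried out independently on each $C^{ij}$ summand exactly as in the $SU(2)$ argument of \cite{H}.
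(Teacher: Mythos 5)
Your high-level plan — reduce to $\Mas(\La_{X,A_t},\La_{Y,A_t})$, split off the manifold-with-boundary spectral flows as Maslov indices against the $\APS^\pm$ family, and absorb the endpoint discrepancy into Maslov triple indices — is indeed the right shape, and it is in the same spirit as the paper's argument. But there is a genuine gap in the step that gets you from the infinite-dimensional endpoint triple index to the finite-dimensional one. You assert that by hypothesis (3) the Cauchy data spaces decompose as $\La_{X,A_j}=P^-_{\varrho(j)}\oplus\sL_{X,\varrho(j)}$ and $\La_{Y,A_j}=P^+_{\varrho(j)}\oplus\sL_{Y,\varrho(j)}$. This is not true: the Cauchy data space of a flat connection on a manifold with boundary is \emph{not} equal to $P^{\mp}\oplus\sL$; that description is only attained in the adiabatic limit, after stretching the collar to infinite length. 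The paper's proof handles exactly this point by homotoping $(\La_{X,A_t},\La_{Y,A_t})$ through the eleven segments of Table 1: the paths $\La^{R_t}_{X,\ep}$ and $\La^{R_t}_{Y,\ep}$ pass from the actual Cauchy data space to its adiabatic limit $\La^\infty$, the paths $L_{X,\ep,t}$ and $L_{Y,\ep,t}$ bridge from $\La^\infty$ to the APS conditions, and it is the Maslov indices of these bridging segments (computed via \cite[Theorem 8.5]{KL}, with the subspaces $W_X\subset dE^-_{0,\nu}$ and $W_Y\subset dE^+_{0,\nu}$ that your proposal never mentions) that produce the finite-dimensional triple indices. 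Simply ``peeling off $P^{\pm}$ and $\hat\sL^{\pm}$ by Lemma~\ref{Lem2.5}'' does not substitute for that computation, precisely because the decomposition you start from is false.

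Two smaller points. First, your Kirk--Lesch identity $\SF(A_t|_X;\APS^+_{\tilde\varrho(t)})=\Mas(\La_{X,A_t},\APS^-_{\tilde\varrho(t)})$ is inconsistent with the convention the paper itself uses in the proof of Lemma~\ref{Lem2.7}, which is $\SF(A_t|_X;\sP_t)=\Mas(\La_{X,A_t},\sP_t)$; the superscript should be $+$, not $-$. Second, your opening claim that the argument can be carried out ``one summand at a time'' over $U_n\oplus\bigoplus_{i<j}C^{ij}$ is not actually used in your argument and is not how the paper's proof of Theorem~\ref{Thm2.6} proceeds: the block decomposition is used for Propositions~\ref{Prop2.2} and~\ref{Prop2.9} (and in building the $\APS^\pm$ family), but the splitting formula itself is proved at the level of abstract Lagrangians and Cauchy data spaces for the full $su(n)$, and the $C^{ij}$ decomposition does not shorten the hard part of the argument.
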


\begin{proof} The proof is very similar to \cite[Section
    4.4]{H}. Recall from \cite[Definition 4.8]{N} that the
    non-negative numbers $\min
    \{\nu \in \R \mid  \Lambda_{X,A} \cap P^+_{a,\nu}    = 0 \}$
    and $\min\{\nu \in \R \mid  P^-_{a,\nu} \cap  \Lambda_{Y,A}   = 0 \}$ are called
    the non-resonance levels of $D_A|_X$ and $D_A|_Y$ respectively. Let $\nu$ be the
    maximum of the non-resonance
      levels of $D_{A_0}|_X$, $D_{A_1}|_X$, $D_{A_0}|_Y$ and $D_{A_1}|_Y$.
      For $\ep = 0,1$, we use $E^\pm_{\ep,\nu}$ for the spaces $E^\pm_{a_{\varrho(\ep)},\nu}$
      and set
$$H_{\ep,\nu} := E^+_{\ep,\nu} \oplus \ker S_{\varrho(\ep)} \oplus E^-_{\ep,\nu}.$$
\begin{enumerate}
\item
Fix some path $L_{X,\ep,t}$, $\ep =0,1$, of Lagrangians in
$H_{\ep,\nu}$ from $\La^\infty_{X,\ep} \cap H_{\ep,\nu}$ to
$\sP^-_{\tilde\varrho(\ep)} \cap H_{\ep,\nu}$, and
\item fix some path $L_{Y,\ep,t}$, $\ep =0,1$, of Lagrangians in
$H_{\ep,\nu}$ from $\La^\infty_{Y,\ep} \cap H_{\ep,\nu}$ to
$\sP^+_{\tilde\varrho(\ep)} \cap H_{\ep,\nu}$.
\end{enumerate}

We know that $\SF(A_t) =
\Mas(\La_{X,A_t},\La_{Y,A_t})$. We can homotope the path
$(\La_{X,A_t},\La_{Y,A_t})$ to the concatenation of paths
$(\sM_i,\sN_i)$, $i=1,\ldots,11$ given in Table \ref{Tab1} without
changing the Maslov index.

\newcommand{\rb}[1]{\raisebox{1.2ex}[-1.2ex]{$\displaystyle{#1}$}}
\begin{table}[ht] \label{Tab1}
\begin{center}
\leavevmode $ \setlength{\arraycolsep}{0.5cm}
\begin{array}{c|c|c|c|c}
i & \text{paths } {\sM}_{i}(t) &
\multicolumn{2}{c|}{\text{Endpoints
of }{\sM}_{i}\text{ and }{\sN}_{i}} & \text{paths } {\sN}_{i}(t)\\
\hline\hline
 & & & & \\
\cline{1-2}\cline{5-5} & & \rb{\La_{X,0}} & \rb{\La_{Y,0}} & \\
\cline{3-4} \rb{1} & \rb{\La_{X,0}^{R_t}} & & & \rb{\La^{R_t}_{Y,0}} \\
\cline{1-2}\cline{5-5} & & \rb{\La^\infty_{X,0}} &  \rb{\La^\infty_{Y,0}} & \\
\cline{3-4} \rb{2} & \rb{P^-_{0,\nu} \oplus L_{X,0,t}} & & & \rb{\La^{R_{1-t}}_{Y,0}} \\
\cline{1-2}\cline{5-5} & & \rb{\APS^-_{\tilde\varrho(0)}} & \rb{\La_{Y,0}} & \\
\cline{3-4} \rb{3} & \rb{\APS^-_{\tilde\varrho(t)}} &
& &
\rb{\La_{Y,t}} \\
\cline{1-2}\cline{5-5} & & \rb{\APS^-_{\tilde\varrho(1)}} & \rb{\La_{Y,1}} & \\
\cline{3-4} \rb{4} & \rb{\text{constant}} & & & \rb{\La_{Y,1}^{R_t}}\\
\cline{1-2}\cline{5-5} & & \rb{\APS^-_{\tilde\varrho(1)}} & \rb{\La^\infty_{Y,1}} & \\
\cline{3-4} \rb{5} &  \rb{\text{constant}} & & & \rb{P^+_{1,\nu}
  \oplus L_{Y,1,t}} \\
\cline{1-2}\cline{5-5} & & \rb{\APS^-_{\tilde\varrho(1)}} &
\rb{\APS_{\tilde\varrho(1)}^+} & \\
\cline{3-4} \rb{6} &  \rb{\APS^-_{\tilde\varrho(1-t)}}
& & &
\rb{\APS_{\tilde\varrho(1-t)}^+}\\
\cline{1-2}\cline{5-5} & & \rb{\APS^-_{\tilde\varrho(0)}} &
\rb{\APS^+_{\tilde\varrho(0)}} & \\
\cline{3-4} \rb{7} &  \rb{P^-_{0,\nu} \oplus L_{X,0,1-t}} & & & \rb{\text{constant}}\\
\cline{1-2}\cline{5-5} & & \rb{\sP^-_{\tilde\varrho(0)}} &
\rb{\APS^+_{\tilde\varrho(0)}} & \\
\cline{3-4} \rb{8} & \rb{\La_{X,0}^{R_{1-t}}} & & & \rb{\text{constant}}\\
\cline{1-2}\cline{5-5} & & \rb{\La_{X,0}} & \rb{\APS^+_{\tilde\varrho(0)}} & \\
\cline{3-4} \rb{9} & \rb{\La_{X,t}} & & &\rb{\APS^+_{\tilde\varrho(t)}}\\
\cline{1-2}\cline{5-5} & & \rb{\La_{X,1}} &
\rb{\APS_{\tilde\varrho(1)}^+} & \\
\cline{3-4} \rb{10} & \rb{\La^{R_t}_{X,1}} & & &
\rb{P^+_{1,\nu}\oplus L_{Y,1,1-t}}\\

\cline{1-2}\cline{5-5} & & \rb{\La^{\infty}_{X,1}} & \rb{\La^\infty_{Y,1}} & \\
\cline{3-4} \rb{11} & \rb{\La^{R_{1-t}}_{X,1}} & & &  \rb{\La_{Y,1}^{R_{1-t}}} \\
\cline{1-2}\cline{5-5} & & \rb{\La_{X,1}} &  \rb{\La_{Y,1}} & \\
\end{array}
$\\[3ex]
\end{center}
\caption{The paths homotopic to $\La_{X,t}$
and $\La_{Y,t}$
  broken up into pieces}
\end{table}
\small\normalsize

Observe first, that the Maslov index of the pairs $(\sM_i,\sN_i)$,
$i=1,4,6,8,11$ are zero (see \cite[Lemma 8.10]{KL}).

Furthermore we can apply \cite[Theorem 8.5]{KL}, where
$W_X \subset d E^-_{0,\nu} \subset E^+_{0,\nu}$ for $D_{A_0}|_X$ and $W_Y \subset d
E^+_{0,\nu} \subset E^-_{0,\nu}$ for $D_{A_0}|_Y$ are as in the
theorem, and $\perp$ denotes the orthogonal complement in $d
E^-_{0,\nu}$ and $d E^+_{0,\nu}$ respectively, to get
\begin{eqnarray*}
\Mas(\sM_2,\sN_2) + \Mas(\sM_7,\sN_7) & = & \Mas(L_{X,0,t},
L_{Y,0,0}) - \Mas(L_{X,0,t}, L_{Y,0,1})\\
& = & \tau_\mu(J L_{X,0,1},L_{Y,0,0},
L_{Y,0,1} ) - \tau_\mu(J L_{X,0,0},L_{Y,0,0},
L_{Y,0,1} ).
\end{eqnarray*}

We have $E^+_{0,\nu} = d E^-_{0,\nu} \oplus d^* E^-_{0,\nu}$, and we
can compute
\begin{eqnarray*}
\lefteqn{\tau_\mu(J L_{X,0,1},L_{Y,0,0},
L_{Y,0,1} )}\\
& = & \tau_\mu(E^+_{0,\nu}\oplus K^+_{\tilde\varrho(0)} \oplus
\hat\sL^+, (W_Y \oplus J W_Y^\perp) \oplus d E^-_{0,\nu} \oplus
\sL_{Y,0}, E^+_{0,\nu}\oplus K^+_{\tilde\varrho(0)} \oplus
\hat\sL^+)\\
& = & \tau_\mu(d^*E^-_{0,\nu} \oplus K^+_{\tilde\varrho(0)} \oplus
\hat\sL^+, (W_Y \oplus J W_Y^\perp) \oplus
\sL_{Y,0}, d^* E^-_{0,\nu}\oplus K^+_{\tilde\varrho(0)} \oplus
\hat\sL^+)\\
& = & \dim (JW_Y) + \dim(J \sL_{Y,0} \cap (K^+_{\tilde\varrho(0)} \oplus
\hat\sL^+)).
\end{eqnarray*}

Similarly
\begin{eqnarray*}
\lefteqn{\tau_\mu(J L_{X,0,0},L_{Y,0,0},
L_{Y,0,1} )}\\
& = & \tau_\mu((J W_X \oplus W_X^\perp) \oplus d^*E^-_{0,\nu} \oplus J
\sL_{X,0}, (W_Y \oplus J W_Y^\perp) \oplus d E^-_{0,\nu} \oplus
\sL_{Y,0}, E^+_{0,\nu}\oplus K^+_{\tilde\varrho(0)} \oplus
\hat\sL^+)\\
& = & \tau_\mu(d^*E^-_{0,\nu} \oplus J
\sL_{X,0}, (W_Y \oplus J W_Y^\perp) \oplus
\sL_{Y,0}, d^* E^-_{0,\nu}\oplus K^+_{\tilde\varrho(0)} \oplus
\hat\sL^+)\\
& = & \dim (JW_Y) + \tau (J
\sL_{X,0}, \sL_{Y,0},K^+_{\tilde\varrho(0)} \oplus
\hat\sL^+).
\end{eqnarray*}

Thus together with \cite[Proposition 6.11]{KL}
\begin{eqnarray*}
\Mas(\sM_2,\sN_2) + \Mas(\sM_7,\sN_7) & = & \dim(J\sL_{Y,0} \cap (K^+_{\tilde\varrho(0)} \oplus
\hat\sL^+)) - \tau (J
\sL_{X,0}, \sL_{Y,0},K^+_{\tilde\varrho(0)} \oplus
\hat\sL^+) \\
& = & \tau (J
\sL_{X,0}, K^+_{\tilde\varrho(0)} \oplus
\hat\sL^+, \sL_{Y,0}).
\end{eqnarray*}

Similarly we get
\begin{eqnarray*}
\Mas(\sM_5,\sN_5) + \Mas(\sM_{10},\sN_{10}) = - \tau (J
\sL_{X,1}, K^+_{\tilde\varrho(1)} \oplus
\hat\sL^+, \sL_{Y,1}).
\end{eqnarray*}

This completes the proof.
\end{proof}

The ideal situation for applying Theorem
\ref{Thm2.6} is when the
manifold $M$ splits into a solid torus $D^2\times S^1$ and its complement $Y$, and the path consists of connections that are flat on $Y$. Although this is not always the case,  Theorem
\ref{Thm2.6} still provides useful information when things are different. We start with a simple observation.

\begin{lem}\label{Lem2.7}
Let $A_t$ and $A'_t$ be loops of $SU(n)$ connections on
3-manifolds $X$ and $X'$, both with boundary the surface $\Sigma$ , and let
$\sP_t$ a continuous family of boundary conditions that make
$D_{A_t}$ and $D_{A'_t}$ self-adjoint. Then
$$\SF(A_t|_X,\sP_t) = \SF(A'_t|_{X'},\sP_t).$$
\end{lem}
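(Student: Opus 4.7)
The plan is to reduce the lemma to the vanishing of the spectral flow of a loop on a closed 3-manifold, via a gluing/splitting argument. Because $\sP_t$ must simultaneously make $D_{A_t}$ and $D_{A'_t}$ self-adjoint, the two families must restrict to the same path $a_t := A_t|_\Sigma = A'_t|_\Sigma$ on the boundary. I would then glue $X$ and $-X'$ along $\Sigma$ to form the closed 3-manifold $M := X \cup_\Sigma (-X')$ together with a continuous loop of $SU(n)$ connections $C_t := A_t \cup A'_t$ on $M$.

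The key observation is that $\SF(C_t;M) = 0$. The affine space of $SU(n)$ connections on the closed manifold $M$ is contractible and $A \mapsto D_A$ is continuous into the self-adjoint Fredholm operators, so the loop $t \mapsto D_{C_t}$ is null-homotopic there, and the homotopy invariance of spectral flow forces its value to vanish. Equivalently, because $C_0 = C_1$ literally, the suspended connection on $M \times S^1$ lives on the trivial bundle, and its second Chern number---which computes the index of the suspended Dirac-type operator, hence $\SF(C_t;M)$---is zero.

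Next I would apply a splitting formula in the spirit of Theorem \ref{Thm2.6}, but adapted to the given boundary conditions $\sP_t$, to write
\begin{equation*}
\SF(C_t;M) \;=\; \SF(A_t|_X;\,\sP_t) \;+\; \SF(A'_t|_{-X'};\,\sP_t^c) \;+\; (\tau_1 - \tau_0),
\end{equation*}
where $\sP_t^c$ is the boundary condition on $-X'$ symplectically complementary to $\sP_t$ and $\tau_\ep$ is a Maslov triple index at $t=\ep$ involving $\sL_{X,A_\ep}$, $\sL_{-X',A'_\ep}$ and $\sP_\ep$. Since $C_t$ is a loop---so $C_0 = C_1$ and $\sP_0 = \sP_1$---the quantities $\tau_0$ and $\tau_1$ are evaluated on identical data and cancel. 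Reversing the orientation of $X'$ sends $D_{A'_t}$ to $-D_{A'_t}$ (and $\sP_t$ to its symplectic complement), flipping the signs of all eigenvalue crossings, so $\SF(A'_t|_{-X'};\sP_t^c) = -\SF(A'_t|_{X'};\sP_t)$. Combined with $\SF(C_t;M) = 0$, this yields the desired equality.

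The main obstacle will be producing the splitting formula for the general boundary conditions $\sP_t$, since Theorem \ref{Thm2.6} is stated only for the APS-type family $\APS_{\tilde\varrho}^\pm$. I would address this by rerunning the homotopy-of-Lagrangian-paths construction of Theorem \ref{Thm2.6} (Table \ref{Tab1}) in the broader setting: that argument depends only on self-adjointness and the existence of a non-resonance level, not on any specific feature of $\APS_{\tilde\varrho}^\pm$, and so should go through essentially verbatim. The loop hypothesis is crucial throughout, since without it the endpoint Maslov corrections would not cancel and the spectral flow on the closed manifold would generically be nonzero.
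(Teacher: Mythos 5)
Your proposal takes a genuinely different and considerably heavier route than the paper, and it has some genuine gaps. The paper's proof is essentially a one-liner: by the Nicolaescu-type identity, $\SF(A_t|_X,\sP_t) = \Mas(\La_{X,A_t},\sP_t)$, and since $A_t$ is a loop while the affine space of connections is contractible, the closed path $t\mapsto\La_{X,A_t}$ of Cauchy data spaces can be homotoped to a constant Lagrangian $\La$ (chosen so $(\La,\sP_t)$ stays Fredholm), giving $\Mas(\La_{X,A_t},\sP_t)=\Mas(\La,\sP_t)$. Applying the same two steps to $(X',A'_t)$ yields the claim immediately, with no gluing, no closed-manifold spectral flow, and no splitting formula.

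Concretely, here are the problems with your route. First, the assertion that simultaneous self-adjointness of $D_{A_t}$ and $D_{A'_t}$ with respect to $\sP_t$ forces $A_t|_\Sigma = A'_t|_\Sigma$ is unjustified: self-adjointness only requires $\sP_t$ to be Lagrangian for the boundary symplectic structure, which is independent of the connection; the boundary restriction enters only the Fredholm condition and even there does not pin down $A_t|_\Sigma$. Without a common boundary path, the glued connection $C_t = A_t\cup A'_t$ on $X\cup_\Sigma(-X')$ does not exist, and the lemma (whose paper proof needs no such coincidence) becomes artificially restricted. Second, you need a splitting formula for arbitrary boundary conditions $\sP_t$; Theorem \ref{Thm2.6} is proved only for the specific family $\APS^\pm_{\tilde\varrho}$, and the extension you sketch would have to re-run the homotopy-of-Lagrangians argument -- which itself hinges on the $\SF=\Mas$ identity plus contractibility of the space of connections, i.e.\ precisely the two tools the paper applies directly. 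The detour is therefore circular. Third, the sign claim $\SF(A'_t|_{-X'};\sP_t^c)=-\SF(A'_t|_{X'};\sP_t)$ under orientation reversal is asserted without proof; the odd signature operator does not simply change sign when the orientation flips, and the relation between $\sP_t$ and its ``symplectic complement'' $\sP_t^c$ needs to be specified. Your closed-manifold observation ($\SF(C_t;M)=0$ for a loop) is correct, but everything between that and the desired equality is either missing or more expensive than the direct argument.
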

\begin{proof}
Let $\La$ be a Lagrangian subspace, such that $(\La,\sP_t)$ is
a
Fredholm pair for all $t$. Then, by the contractibility of the space of
connections we have
$$
\SF(A_t|_X,\sP_t) =\Mas(\La_{X,A_t},\sP_t)  =
\Mas(\La,\sP_t)  =\Mas(\La_{X',A'_t},\sP_t)  = \SF(A'_t|_{X'},\sP_t).
$$
\end{proof}

Therefore, the spectral flow of the odd signature operator coupled to a
loop of $SU(n)$ connections on a manifold with boundary only
depends on its restriction to the boundary. Orient the solid torus $S$ such that the
orientations of $S$ and $X$ agree in a collar of $\partial S =
\partial X$.

\begin{defn} \label{Def2.8}
Given a loop $\tilde\varrho$ in $\widetilde{\La^2}$ with projection $\varrho$ in $\La^2$, let  $A_t$ be a path of $SU(n)$ connections on the solid torus $S$ restricting to
  $a_{\varrho(t)}$ on the boundary. We define $\SF(\tilde\varrho):=\SF(A_t|_S;\sP^+_{\tilde\varrho(t)})$.
\end{defn}

Since the spectral flow is a homotopy invariant and additive under
concatenation of (closed) paths, the computation for an arbitrary
loop in $\widetilde{\La^2}$ can be reduced to a loop $\tilde\varrho
= (\al,\be,\th)$, where $(\al(t),\be(t))$ is constant and lies in $\cZ_{ij}$, and $\th(t)=(\th_{kl}(t))$
for $\th_{kl}(t)=1$ unless $k=i$ and $l=j,$ in which case $\th_{ij}(t)= e^{2\pi i t}$, $t\in[0,1]$. 
After gauge transformation we may
further assume, that $(i,j) = (1,2)$. Then, we can assume after
homotopy that
$$(\al,\be) \equiv
((\al_1,\al_2,0,\ldots,0),(\be_1,\be_2,0,\ldots,0)) \in
\cZ_{12}.$$ Consequently, $\al_1,\al_2, \be_1, \be_2 \in \frac{1}{2}\Z$. Let us
identify $SU(2)$ with $SU(2) \times \{ \Id \} \subset SU(n)$ and
$su(2)$ with $su(2) \times \{ 0 \} \subset su(n)$. Let $\varrho$ be
the projection of $\tilde\varrho$ in $\La^2$, and let $A_t$ be a path
of $SU(2)$ connections on the solid torus $S$ restricting to
$a_{\varrho(t)}$ on the boundary. Then we compute
\begin{align*}
\SF(\tilde\varrho) &= \SF(A_t|_S;\sP^+_{\tilde\varrho(t)})\\ &=
\SF(A_t|_S;P^{12+}_{\tilde\varrho(t)} \oplus  (U_n\,dm\oplus U_n\,
dm\wedge d\ell )\oplus K^{12+}_{\tilde\varrho(t)}).
\end{align*}
Since $U_n\,dm\oplus U_n\,
dm\wedge d\ell$ is transverse to $U_n\oplus U_n\,
d\ell$, we can apply \cite[Theorem 5.3.3]{H} to compute that
$\SF(\tilde\varrho) = 4$.

We define the winding number for loops $\tilde\varrho$ in
$\widetilde{\La^2}$ as follows. First homotope $\tilde\varrho$ to a
product $\tilde\varrho^1 * \cdots * \tilde\varrho^m$  of loops  such
that each $\tilde \varrho^k = \tilde \tau^k * (\al^k,\be^k,\th^k) *
(\tilde\tau^k)^{-1}$ with $(\al^k(t), \be^k(t))$ constant. Then we define
$$
\wind(\tilde\varrho) :=\sum_{k=1}^m \sum_{{(i,j)}\atop {(\al^k,\be^k)}\in
  \cZ_{ij}} \wind\left(\th^k_{ij}(t)\right).
$$
Let us summarize.
\begin{prop}\label{Prop2.9} Let $\tilde\varrho(t)$ be a loop in $\widetilde{\La^2}$. Then
$$
\SF(\tilde\varrho) = 4 \, \wind(\tilde\varrho).
$$
\end{prop}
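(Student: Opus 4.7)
The plan is to prove the identity $\SF(\tilde\varrho) = 4 \, \wind(\tilde\varrho)$ by reducing to the basic loops already used in the definition of $\wind$, since on each such basic loop both sides are additive and the explicit computation has essentially been carried out in the preceding paragraphs. I would first observe that both $\SF$ and $\wind$ are additive under concatenation of loops and invariant under free homotopy. Additivity of $\SF$ is standard (and uses Lemma \ref{Lem2.7} to see it is well-defined, independent of the extension of $a_{\varrho(t)}$ to the solid torus $S$), while additivity of $\wind$ is built into its definition.

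Next, I would invoke the decomposition $\tilde\varrho \simeq \tilde\varrho^1 * \cdots * \tilde\varrho^m$, where $\tilde\varrho^k = \tilde\tau^k * (\al^k,\be^k,\th^k) * (\tilde\tau^k)^{-1}$ with $(\al^k(t),\be^k(t))$ constant and $\th^k$ varying only in one $\th_{ij}$ coordinate. By the additivity just noted, it suffices to verify the identity on each conjugated basic loop. The conjugating paths $\tilde\tau^k$ cancel out on both sides (for $\SF$ by the loop cancellation of spectral flow along an arc and its reverse, for $\wind$ by the vanishing contribution of $\tilde\tau^k$ in the defining sum), so I only need to prove $\SF(\tilde\varrho) = 4 \cdot 1 = 4$ for a basic loop where $(\al,\be) \in \cZ_{ij}$ and $\th_{ij}(t) = e^{2\pi i t}$.

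For such a basic loop, I would perform two reductions. First, after permuting coordinates (a Weyl-group gauge transformation on the trivialized bundle over $S$), assume $(i,j)=(1,2)$ and $(\al,\be) = ((\al_1,\al_2,0,\ldots,0),(\be_1,\be_2,0,\ldots,0))$, so only the upper $2 \times 2$ block of $SU(n)$ is active. Second, by Lemma \ref{Lem2.7}, I may choose any extension of $a_{\varrho(t)}$ to an $SU(n)$ connection on $S$; taking one supported in the $SU(2)$ subgroup decouples the problem. In the boundary conditions, the off-diagonal $C^{12}$ piece contributes exactly $P^{12+}_{\tilde\varrho(t)}\oplus K^{12+}_{\tilde\varrho(t)}$, matching the $SU(2)$ setup of \cite[Theorem 5.3.3]{H}, while the remaining diagonal summand decomposes as the pair of Lagrangians $U_n\,dm \oplus U_n\,dm\wedge d\ell$ against $U_n \oplus U_n\,d\ell$, which are transverse and therefore contribute no spectral flow. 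The $SU(2)$ computation of \cite[Theorem 5.3.3]{H} then yields $\SF(\tilde\varrho) = 4$, completing the proof.

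The main conceptual obstacle is the decomposition step: justifying that an arbitrary loop in $\widetilde{\La^2}$ is freely homotopic to a concatenation of conjugated basic loops. This relies on the description of $\widetilde{\La^2}$ as a subspace of $(\dot{\R^2})^{{n\choose 2}}$ via the embedding coming from $\widetilde{\Om^2}$, where each punctured copy of $\R^2$ has fundamental group generated by small loops around the removed disks, corresponding exactly to the basic loops defined above.
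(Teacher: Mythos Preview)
Your proposal is correct and follows essentially the same route as the paper: the paper's argument is the paragraph immediately preceding Proposition~\ref{Prop2.9} (the proposition is stated as a summary), and it proceeds exactly as you describe---reduce by homotopy invariance and additivity to a basic loop with constant $(\al,\be)\in\cZ_{ij}$ and $\th_{ij}(t)=e^{2\pi it}$, gauge to $(i,j)=(1,2)$, homotope so that the connection is $SU(2)\times\{\Id\}$-valued, observe the transversality of $U_n\,dm\oplus U_n\,dm\wedge d\ell$ against $U_n\oplus U_n\,d\ell$, and invoke \cite[Theorem 5.3.3]{H} to obtain $\SF=4$. Your discussion of the decomposition step and the topology of $\widetilde{\La^2}$ is slightly more explicit than the paper's, but otherwise the arguments coincide.
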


Now we can state the main splitting formula.

\begin{thm}\label{Thm2.10} Consider two flat connections $B_0$ and
  $B_1$ on $M = X \cup_T Y$. Let $A_t$ and $A'_t$ be paths of $SU(n)$ connections
  on $X$ and $Y$, respectively, with $B_\ep|_X = A_\ep$ and
  $B_\ep|_Y= A'_\ep$, $\ep = 0,1$, satisfying the properties in Theorem
  \ref{Thm2.6} with $\tilde\varrho$ and $\tilde\varrho'$ the
  corresponding paths in $\widetilde{\La^2}$. Then
\begin{eqnarray*}
\SF(B_0,B_1) &= &\SF(A_t;\APS^+_{\tilde\varrho(t)}) +
\SF(A'_{t};\APS^-_{\tilde\varrho'(t)})+\SF(\tilde\varrho(1-t)*\tilde\varrho'(t))\\
&&{}+\tau_\mu(J\sL_{X,0}, K^+_{\tilde\varrho(0)} \oplus \hat\sL^+, \sL_{Y,0}) -
\tau_\mu(J\sL_{X,1}, K^+_{\tilde\varrho(1)} \oplus \hat \sL^+, \sL_{Y,1}).
\end{eqnarray*}
\end{thm}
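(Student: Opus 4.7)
The strategy is to reduce the claim to Theorem \ref{Thm2.6} by constructing a single path $\cB_t$ of $SU(n)$ connections on $M$ whose restrictions to $X$ and $Y$ differ from $A_t$ and $A'_t$ only in controllable ways that can be tracked by Definition \ref{Def2.8}. The obstruction to defining $\cB_t := A_t \cup A'_t$ directly is that, although $A_\ep \cup A'_\ep = B_\ep$ at $\ep = 0, 1$, the boundary restrictions $a_{\varrho(t)}$ and $a_{\varrho'(t)}$ generally disagree for $t \in (0,1)$, so that the naive gluing does not define a connection on $M$.

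I will modify $A_t$ on a collar $[-1, 0] \times T$ of $\partial X = T$ to produce a path $\tilde A_t$ on $X$ with $\tilde A_t|_T = a_{\varrho'(t)}$ and with the same endpoints $\tilde A_\ep = A_\ep = B_\ep|_X$ (possible since $\varrho(\ep) = \varrho'(\ep)$). Fixing $0 < \delta \ll 1$, I require $\tilde A_t$ to equal $a_{\varrho'(t)}$ in cylindrical form on $[-\delta, 0] \times T$, to equal $A_t$ on $X \setminus ([-2\delta, 0] \times T)$, and to interpolate smoothly on $[-2\delta, -\delta] \times T$ through the flat family $a_{\al,\be}$ of Definition \ref{Def2.1}. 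Then $\cB_t := \tilde A_t \cup A'_t$ is a smooth path on $M$ from $B_0$ to $B_1$ satisfying the hypotheses of Theorem \ref{Thm2.6} with common boundary path $\tilde\varrho'$. Applying that theorem yields
$$
\SF(B_0, B_1) = \SF(\tilde A_t; \APS^+_{\tilde\varrho'(t)}) + \SF(A'_t; \APS^-_{\tilde\varrho'(t)}) + \tau_0 - \tau_1,
$$
where the Maslov triple indices $\tau_\ep$ at $t = \ep$ agree with those in the target formula since $\tilde\varrho'(\ep) = \tilde\varrho(\ep)$ and $\sL_{X,\ep}$, $\sL_{Y,\ep}$ depend only on the flat connections $B_\ep$.

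It remains to convert $\SF(\tilde A_t; \APS^+_{\tilde\varrho'(t)})$ into $\SF(A_t; \APS^+_{\tilde\varrho(t)})$ plus the solid torus correction. For this I consider the concatenated loop $\tilde A_t * A_{1-t}$ of $SU(n)$ connections on $X$, equipped with the continuous loop of boundary conditions $\APS^+_{\tilde\varrho'(t)} * \APS^+_{\tilde\varrho(1-t)}$ (continuity in $\widetilde{\La^2}$ follows from Theorem \ref{Thm2.3}). The boundary restriction of this loop, viewed as a loop in $\widetilde{\La^2}$, coincides (up to a shift of basepoint, which does not affect spectral flow) with $\tilde\varrho(1-t) * \tilde\varrho'(t)$. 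Lemma \ref{Lem2.7} then identifies its spectral flow on $X$ with that of a matching loop on the solid torus $S$, which by Definition \ref{Def2.8} equals $\SF(\tilde\varrho(1-t) * \tilde\varrho'(t))$. Since spectral flow on a concatenated path is the signed sum of the individual spectral flows, this gives
$$
\SF(\tilde A_t; \APS^+_{\tilde\varrho'(t)}) - \SF(A_t; \APS^+_{\tilde\varrho(t)}) = \SF(\tilde\varrho(1-t) * \tilde\varrho'(t)),
$$
and substituting into the previous identity yields the desired formula. The chief technical obstacle is in the construction of $\tilde A_t$: it must interpolate between the two boundary paths while remaining flat and in cylindrical form in some collar of $T$, which is achieved by localizing the interpolation strictly away from $T$ in an interior sub-collar and using the flat family parameterized by $\La^2$.
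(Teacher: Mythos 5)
Your proof is essentially the paper's argument. The paper's proof reads: extend $A'_t$ arbitrarily to a path $B_t$ on $M$ from $B_0$ to $B_1$, apply Theorem~\ref{Thm2.6} with the boundary path $\tilde\varrho'$, add and subtract $\SF(A_t;\APS^+_{\tilde\varrho(t)})$, regroup the difference $\SF(B_t|_X;\APS^+_{\tilde\varrho'(t)})-\SF(A_t;\APS^+_{\tilde\varrho(t)})$ as the loop spectral flow $\SF(A_{1-t}*B_t|_X;\APS^+_{\tilde\varrho(1-t)*\tilde\varrho'(t)})$ on $X$, and invoke Lemma~\ref{Lem2.7} to identify that with $\SF(\tilde\varrho(1-t)*\tilde\varrho'(t))$ on the solid torus. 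Your $\cB_t=\tilde A_t\cup A'_t$ is simply an explicit choice of the paper's $B_t$, with the collar interpolation spelling out how to make the extension satisfy the cylindrical/flat-in-a-collar hypotheses; your loop $\tilde A_t*A_{1-t}$ is the cyclic shift of the paper's $A_{1-t}*B_t|_X$, which does not affect the loop spectral flow. So the approach, the key lemma (Lemma~\ref{Lem2.7}), and the bookkeeping are all the same, with your version slightly more explicit about the extension step.
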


\begin{proof} Extend $A'_t$ arbitrarily to a path $B_t$ from $B_0$ to
  $B_1$. Then
\begin{eqnarray*}
\SF(B_t) & = &  \SF(B_t|_{X};\APS^+_{\tilde\varrho'(t)}) +
\SF(B_t|_Y ;\APS^-_{\tilde\varrho'(t)})+\SF(A_t;\APS^+_{\tilde\varrho(t)}) -\SF(A_{t};\APS^+_{\tilde\varrho(t)}),\\
&&{}+\tau_\mu(J\sL_{X,0}, K^+_{\tilde\varrho(0)} \oplus \hat\sL^+, \sL_{Y,0}) -
\tau_\mu(J\sL_{X,1}, K^+_{\tilde\varrho(1)} \oplus \hat \sL^+, \sL_{Y,1})\\
& = & \SF(A_t;\APS^+_{\tilde\varrho(t)})+\SF(A'_t ;\APS^-_{\tilde\varrho'(t)}) +\SF(A_{1-t}*B_{t}|_X ;\APS^+_{\tilde\varrho(1-t)*\tilde\varrho'(t)}) \\
&&{}+\tau_\mu(J\sL_{X,0}, K^+_{\tilde\varrho(0)} \oplus \hat\sL^+, \sL_{Y,0}) -
\tau_\mu(J\sL_{X,1}, K^+_{\tilde\varrho(1)} \oplus \hat \sL^+, \sL_{Y,1}).
\end{eqnarray*}
With Lemma \ref{Lem2.7} the desired formula follows.
\end{proof}

$\SF(\tilde\varrho)$ can be defined for paths other than loops. This has
been computed in the case $n=2$ by \cite[Theorem 5.3.3]{H}.

\section{The $SU(3)$ representation variety of  a spliced sum} \label{Sec3}
Suppose $K_1$ and $K_2$ are knots in $S^3$ with complements $X_1 = S^3 \setminus \nu K_1$
and $X_2 = S^3 \setminus \nu K_2$, and let $M= X_1 \cup_T X_2$ be the spliced sum.
In this section, we establish some basic results about the representation variety $R(M,SU(3))$.
 
Given a representation $\al \co\pi_1(M) \to SU(3)$,
we set $\al_1 = \al |_{\pi_1(X_1)}, \al_2 = \al |_{\pi_1(X_2)}$, and $\al_0 = \al |_{\pi_1(T)}$,
and we will sometimes write $\al= \al_1 \cup_{\al_0} \al_2.$

\begin{lem} \label{Lem3.1}
If $\al \co\pi_1(M) \to SU(3)$ is a representation
with $\al_1$ or $\al_2$ abelian, then $\al$ is trivial.
\end{lem}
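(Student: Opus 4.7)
The plan is to leverage two elementary facts about knot complements in $S^3$. First, the meridian $\mu_i$ normally generates the knot group $\pi_1(X_i)$, since Dehn filling $X_i$ along $\mu_i$ reproduces $S^3$, so by van Kampen the normal closure of $\mu_i$ in $\pi_1(X_i)$ equals all of $\pi_1(X_i)$. Second, the longitude $\la_i$ lies in the commutator subgroup $[\pi_1(X_i),\pi_1(X_i)]$, because $H_1(X_i)\cong\Z$ is generated by $\mu_i$ and the canonical longitude is null-homologous. Finally, by the splicing construction we have the identifications $\mu_1=\la_2$ and $\la_1=\mu_2$ inside $\pi_1(M)$.

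By the symmetric role of the two sides, I may assume that $\al_1$ is abelian. Then $\al_1$ factors through the abelianization $H_1(X_1)\cong\Z$, so in particular $\al_1(\la_1)=I$. The splicing identification $\la_1=\mu_2$ then forces $\al_2(\mu_2)=I$. Since $\ker\al_2$ is a normal subgroup of $\pi_1(X_2)$ and $\mu_2$ normally generates $\pi_1(X_2)$, this yields $\al_2\equiv I$.

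It follows that $\al_2(\la_2)=I$, and using the other identification $\la_2=\mu_1$ we obtain $\al_1(\mu_1)=I$. Since $\al_1$ factors through $H_1(X_1)\cong\Z$ and the image of $\mu_1$ generates that group, $\al_1$ is trivial as well. Hence $\al=\al_1\cup_{\al_0}\al_2$ is trivial on all of $\pi_1(M)$. The argument is a short piece of group theory built from the splicing relations; the only substantive ingredients are the normal generation by the meridian (from the Dehn-filling / van Kampen observation, which uses that we are splicing in $S^3$) and the null-homology of the canonical longitude, and there is no real obstacle beyond making sure these two facts are invoked in the right order.
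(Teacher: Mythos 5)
Your proof is correct and follows essentially the same route as the paper's: use that $\la_1$ is a commutator to get $\al_1(\la_1)=I$, transfer this through the splicing identification $\la_1=\mu_2$, use normal generation by $\mu_2$ to kill $\al_2$, then run the identification the other way to kill $\al_1$. The only difference is cosmetic — you spell out the supporting facts (van Kampen, null-homology of the longitude) in more detail.
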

\begin{rem}
This lemma is true in general for representations $\al \co\pi_1(M) \to SU(n)$,
where $M$ is the spliced sum along knots in $S^3,$ but not for spliced
sums along knots in homology spheres.
\end{rem}
\begin{proof}
Suppose $\al_1$ is abelian. Because $\la_1$ lies in the commutator subgroup,
it follows that $\al(\la_1)=I.$
Splicing identifies $\mu_2$ with $\la_1,$ and it follows that $\al(\mu_2) = I.$
Because $\mu_2$ normally generates $\pi_1(X_2)$, we conclude that $\al_2$
is trivial. In particular $\al(\la_2)=I$, and splicing again shows $\al(\mu_1) =I$
and the same argument shows  $\al_1$ is also trivial.
\end{proof}

\begin{lem} \label{Lem3.3}
If $\al \co\pi_1(M) \to SU(3)$ is a representation
with $\al(\mu_1)$ or $\al(\mu_2)$ central, then $\al$ is trivial.
\end{lem}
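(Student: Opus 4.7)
The plan is to reduce Lemma 3.3 directly to Lemma 3.1 by showing that centrality of $\al(\mu_i)$ forces $\al_i$ to be abelian.

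First I would recall the standard fact that for a knot $K \subset S^3$, the meridian $\mu$ normally generates $\pi_1(S^3 \setminus \nu K)$. This follows because attaching a $2$-handle along $\mu$ recovers a simply connected manifold, namely $S^3$ (with a ball to be filled in). Thus $\pi_1(X_i) = \langle \langle \mu_i \rangle \rangle$, i.e.\ every element of $\pi_1(X_i)$ is a product of conjugates of $\mu_i^{\pm 1}$.

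Next, suppose without loss of generality that $\al(\mu_1)$ is central in $SU(3)$. Since central elements are fixed under conjugation, every conjugate of $\al(\mu_1)$ equals $\al(\mu_1)$ itself. Combined with the normal generation just recalled, this shows that the image of $\al_1$ is contained in the cyclic subgroup generated by the central element $\al(\mu_1)$. In particular, $\al_1$ is abelian (indeed, its image lies in the center $\{I,\om I,\om^2 I\}$ of $SU(3)$, where $\om = e^{2\pi i/3}$).

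Now I would invoke Lemma 3.1: since $\al_1$ is abelian, $\al$ is trivial. The case where $\al(\mu_2)$ is central is handled by the symmetric argument, swapping the roles of $X_1$ and $X_2$. There is no real obstacle here; the only point to double-check is the normal generation statement, which is standard for knots in $S^3$ (and is in fact the reason the lemma is stated for this ambient manifold rather than for knots in arbitrary homology spheres, as noted in the remark after Lemma 3.1).
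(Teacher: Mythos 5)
Your proof is correct and follows essentially the same path as the paper's: use normal generation of $\pi_1(X_1)$ by $\mu_1$ to deduce that $\al_1$ is abelian, then invoke Lemma \ref{Lem3.1}. You simply spell out in more detail why centrality of $\al(\mu_1)$ plus normal generation forces the image of $\al_1$ into the (cyclic, central) subgroup generated by $\al(\mu_1)$, whereas the paper compresses this to one sentence.
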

\begin{proof}
Suppose $\al(\mu_1)$ is central. Since $\mu_1$ normally generates $\pi_1(X_1)$,
it follows that $\al_1$ is abelian, and we apply Lemma \ref{Lem3.1} to make the conclusion.
\end{proof}

Because $\pi_1(T) = \Z^2$ is abelian, we can conjugate $\al$ so that both $\al_0$ is diagonal.
Thus, the stablizer subgroup $\Stab(\al_0)$ must contain
the maximal torus $T_{SU(3)} \cong T^2.$ The next two results show that, for the purposes
of computing the $SU(3)$ Casson invariant, we can restrict our attention to representations
with $\Stab (\al_0) = T_{SU(3)}.$

\begin{prop} \label{Prop3.4}
If $\al \co\pi_1(M) \to SU(3)$ is a nontrivial representation
with $\Stab(\al_0) \neq T_{SU(3)}$, then
$\al_1$ and $\al_2$ are both irreducible.
\end{prop}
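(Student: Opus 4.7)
I would argue by contradiction: assume $\alpha_1$ is reducible. Since $\alpha$ is nontrivial, Lemma~\ref{Lem3.1} rules out $\alpha_1$ being abelian, so after conjugating, the image of $\alpha_1$ lies in $S(U(2) \times U(1))$ with a decomposition $V_2 \oplus V_1$ where $V_2$ is irreducible. Because $\la_1 \in [\pi_1(X_1), \pi_1(X_1)]$ and $[S(U(2)\times U(1)), S(U(2)\times U(1))] = SU(2) \times \{1\}$, I may write $\alpha_1(\la_1) = \diag(A, 1)$ with $A \in SU(2)$, and $\alpha_1(\mu_1) = \diag(B, d)$ with $B \in U(2)$; moreover $AB = BA$ because $\mu_1$ and $\la_1$ commute in $\pi_1(T)$. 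The plan is to show that in every case either $\alpha$ is trivial or $\Stab(\alpha_0) = T_{SU(3)}$, contradicting one of the hypotheses.

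The first case is $A = I$: then $\alpha(\mu_2) = \alpha(\la_1) = I$ is central and Lemma~\ref{Lem3.3} forces $\alpha$ to be trivial. The second case is $A$ having two distinct eigenvalues $a_1 \neq a_2$: then $a_1 a_2 = 1$ forces $a_1, a_2 \neq 1$, and after further conjugating within $S(U(2) \times U(1))$ to diagonalize $A$ (which also diagonalizes $B$, by commutativity), $\alpha_0(\mu_2) = \diag(a_1, a_2, 1)$ has three distinct eigenvalues and hence centralizer $T_{SU(3)}$ in $SU(3)$; so $\Stab(\alpha_0) = T_{SU(3)}$.

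The remaining case $A = -I$ breaks into two sub-cases. If $B$ has distinct eigenvalues, diagonalize $B$ and check by a direct computation that the joint centralizer of $\diag(-1,-1,1)$ and $\diag(b_1, b_2, d)$ in $SU(3)$ is the maximal torus $T_{SU(3)}$ regardless of whether some $b_i$ coincides with $d$. If instead $B = bI_2$ is scalar, the relation $\det \alpha_1(\mu_1) = 1$ yields $d = b^{-2}$, so $\alpha_1(\mu_1) = \diag(bI_2, b^{-2})$ lies in the center of $S(U(2)\times U(1))$. Since $\mu_1$ normally generates $\pi_1(X_1)$ and $\alpha_1(\mu_1)$ is central in the ambient subgroup (hence in the image), the image of $\alpha_1$ is just the cyclic group $\langle \alpha_1(\mu_1)\rangle$, which is abelian --- contradicting the irreducibility of $V_2$.

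The irreducibility of $\alpha_2$ then follows by the symmetric argument, swapping the roles of the two knot complements via the splicing identification $\mu_1 \leftrightarrow \la_2$, $\la_1 \leftrightarrow \mu_2$. The most delicate step should be the final sub-case ($A=-I$ and $B$ scalar): here one really needs normal generation of $\pi_1(X_1)$ by $\mu_1$, combined with centrality of $\alpha_1(\mu_1)$ in the ambient reducible subgroup, to collapse the image of $\alpha_1$ to a cyclic group and thereby contradict irreducibility of $V_2$.
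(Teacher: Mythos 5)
Your proof is correct and takes essentially the same approach as the paper's: conjugate $\alpha_1$ into $S(U(2)\times U(1))$, use $\la_1 \in [\pi_1(X_1),\pi_1(X_1)]$ to place $\alpha_1(\la_1)$ in $SU(2)\times\{1\}$, appeal to Lemmas~\ref{Lem3.1} and~\ref{Lem3.3} for the degenerate cases, and in the remaining case observe that $\alpha_1(\mu_1)$ is central in $S(U(2)\times U(1))$ so that normal generation by $\mu_1$ forces $\alpha_1$ to be abelian. The only real difference is organizational: the paper first uses $\Stab(\alpha_0)\neq T_{SU(3)}$ to reduce $\alpha_0$ to the form $\alpha(\mu_1)=\diag(a,a,\bar a^2)$, $\alpha(\la_1)=\diag(b,b,\bar b^2)$ before supposing $\alpha_1$ reducible, whereas you make that assumption first and then perform the eigenvalue case split on $A$ and $B$, verifying $\Stab(\alpha_0)=T_{SU(3)}$ by direct joint-centralizer computations.
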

\begin{proof}
Since $\pi_1(T) =\Z^2$ is abelian, we can conjugate $\al$ so that
 $\al(\mu_1)$ and $\al(\la_1)$ are both diagonal.
 Now if either of these elements has three distinct eigenvalues,
then  $\Stab(\al_0) = T_{SU(3)}$. Thus our hypotheses imply that
$\al(\mu_1)$ and $\al(\mu_2)$ both have a double eigenvalue.
If their 2-dimensional eigenspaces do not coincide, then we can find integers $k,l$
such that the diagonal matrix
$\al(\mu^k \la^l)$ has three distinct eigenvalues, and it would then follow that
$\Stab(\al_0) = T_{SU(3)}$. Thus, we can assume that, up to conjugation,
$$\al(\mu_1) = \begin{pmatrix} a & 0 & 0 \\ 0 & a & 0 \\ 0&0&\bar{a}^2\end{pmatrix}
\quad \text{ and } \quad
\al(\la_1) = \begin{pmatrix} b & 0 & 0 \\ 0 & b & 0 \\ 0&0&\bar{b}^2\end{pmatrix}$$
for some $a,b \in U(1)$ not equal to a third root of unity.

Now suppose to the contrary that $\al_1$ is reducible. Then, up to conjugation,
$\al_1$ has image in
$S(U(2)\times U(1))$. Since $\la_1$ lies
in the commutator subgroup of $\pi_1(X_1)$, its image under $\al$ must lie in
the commutator group of $S(U(2)\times U(1))$, which
is $SU(2) \times \{1\}$. This shows that one of the eigenvalues of $\al(\la_1)$
must equal 1. If $b=1,$ then $\al(\mu_2) = \al(\la_1) =I$ and Lemma \ref{Lem3.3} implies
$\al$ is trivial, a contradiction.  Otherwise, $b^2=1$ and $b=-1$ and we see then
that $\al(\mu_1)$ lies in the center of  $\al_1(\pi_1(X_1)).$ Because $\mu_1$
normally generates this group, this shows that $\al_1$ is abelian and
Lemma \ref{Lem3.1} gives the desired contradiction.
\end{proof}

For further results, we need to make the additional assumptions that the representation
varieties $R(X_1,SU(3))$ and $R(X_2,SU(3))$ are in general position in the
``$SU(3)$ pillowcase" $R(T,SU(3)).$ Specifically, we assume
 that $R(X_1,SU(3))$ and $R(X_2,SU(3))$ intersect transversely in $R(T,SU(3))$,
 and that the restriction maps
 $$R(X_1,SU(3)) \to R(T,SU(3))\quad \text{ and } \quad R(X_2,SU(3)) \to R(T,SU(3))$$ are both local immersions in a neighborhood of each intersection point.

These assumptions will not hold in general for spliced sums along knots in $S^3$,
but one can check that they do hold for spliced sums along $(2,q)$ torus knots.

In the following result, we use $[\al]$ to denote the conjugacy class of a representation $\al \co\pi_1(M)\to SU(3)$.

\begin{prop} \label{Prop3.5}
Suppose the above assumption holds for all representations $\al \co\pi_1(M) \to SU(3)$
and suppose $\al$ is nontrivial with $\Stab(\al_0) \neq T_{SU(3)}$.
Set
$$C = \{ [\be] \in R(M,SU(3)) \mid \text{ $\be_i$ is conjugate to $\al_i$ for $i=1,2$} \}.$$
Then $C \subset R^*(M,SU(3))$ and is diffeomorphic to $S(U(2)\times U(1)) / Z_{SU(3)}$,
where $Z_{SU(3)} \cong \Z_3$ is the center
of $SU(3)$. In particular, we have $\chi(C)=0.$
\end{prop}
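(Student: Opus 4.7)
The plan is to parametrize $C$ explicitly as a homogeneous space and then read off the Euler characteristic.

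First, by Proposition \ref{Prop3.4} both $\al_1$ and $\al_2$ are irreducible. For any $[\be] \in C$, the restriction $\be_1$ is conjugate to $\al_1$ and hence irreducible, so $\be(\pi_1(M))$ contains the irreducible subgroup $\be_1(\pi_1(X_1))$. Consequently $\be$ itself is irreducible, giving $C \subset R^*(M,SU(3))$.

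Next, I would pin down $\Stab(\al_0)$ under the hypothesis. Following the argument in the proof of Proposition \ref{Prop3.4}, after conjugation we may assume
$$\al(\mu_1) = \diag(a,a,\bar a^2), \qquad \al(\la_1) = \diag(b,b,\bar b^2),$$
because if the two $2$-dimensional eigenspaces failed to coincide, some $\al(\mu_1^k \la_1^\ell)$ would have three distinct eigenvalues, forcing $\Stab(\al_0) = T_{SU(3)}$, contrary to hypothesis. Hence $\Stab(\al_0) = S(U(2) \times U(1))$.

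I would then parametrize $C$ directly. Given $[\be] \in C$, a global conjugation lets us assume $\be_1 = \al_1$, so $\be_0 = \al_0$. Writing $\be_2 = g\al_2 g^{-1}$, the gluing condition $\be_2|_T = \al_0$ forces $g\al_0 g^{-1} = \al_0$, i.e.\ $g \in \Stab(\al_0) = S(U(2)\times U(1))$. Two elements $g,g' \in \Stab(\al_0)$ produce the same $\be_2$ iff $g^{-1}g' \in \Stab(\al_2) = Z_{SU(3)}$, using irreducibility of $\al_2$. The residual global conjugation that preserves $\be_1 = \al_1$ is $\Stab(\al_1) = Z_{SU(3)}$, which acts on $g$ by left multiplication. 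Since $Z_{SU(3)}$ is central, the left and right $Z_{SU(3)}$-actions on $\Stab(\al_0)$ coincide and together define a single free $Z_{SU(3)}$-action, so
$$C \cong S(U(2)\times U(1))/Z_{SU(3)}.$$
The transversality and local immersion hypotheses enter here: they guarantee that this parametrization genuinely cuts out $C$ as a smooth submanifold, rather than producing unexpected multiplicity or extra components from nearby pieces of $R(X_i,SU(3))$.

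Finally, $S(U(2) \times U(1)) \cong U(2)$ is a compact connected Lie group of positive dimension, so $\chi(S(U(2) \times U(1))) = 0$. The free action of the finite group $Z_{SU(3)}$ then gives $\chi(C) = \chi(S(U(2) \times U(1)))/|Z_{SU(3)}| = 0$. The step I expect to be most delicate is the careful bookkeeping in the parametrization, where one must verify that the two overlapping $Z_{SU(3)}$-actions (from $\Stab(\al_1)$ on the left and $\Stab(\al_2)$ on the right) collapse to a single $\Z_3$-quotient rather than a $\Z_3 \times \Z_3$-quotient.
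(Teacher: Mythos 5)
Your proof is correct and follows essentially the same route as the paper: reducing $C$ to the double coset $\Stab(\al_1)\backslash\Stab(\al_0)/\Stab(\al_2) = Z_{SU(3)}\backslash S(U(2)\times U(1))/Z_{SU(3)}$ and using $\chi(U(2))=0$. You spell out the parametrization and the collapse of the two central $Z_{SU(3)}$-actions into one more explicitly than the paper does, which is a small but genuine improvement in clarity.
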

\begin{proof}
Proposition \ref{Prop3.4} implies that $C$ consists entirely of irreducible representations, and under
the transversality assumption, this component can be described as the double coset
$\Ga_1 \backslash \Ga_0 /\Ga_2$, where $\Ga_i = \Stab(\al_i)$.  Proposition \ref{Prop3.4}
shows that $\Ga_1 = \Ga_2 = Z_{SU(3)}$, and  its proof shows that
$\Ga_0 = S(U(2)\times U(1)).$ Since $S(U(2)\times U(1))$ is diffeomorphic to $U(2)$,
it has zero Euler characteristic.
\end{proof}

If $\al \co\pi_1(M) \to SU(3)$.
is a nontrivial representation with  $\Stab(\al_0) = T_{SU(3)}$,
then we have exactly  three possibilities:
\begin{enumerate}
\item[A.]  Both $\al_1$ and $\al_2$ are irreducible,
\item[B.]  One of $\al_1, \al_2$ is irreducible, the other is reducible and nonabelian, or
\item[C.]  Both $\al_1$ and $\al_2$ are reducible and nonabelian.
\end{enumerate}
The next result shows that, for the purposes of computing the $SU(3)$ Casson
invariant of spliced sums, the only contributions come from case C.

\begin{prop} \label{Prop3.6}
Suppose the above assumption holds for all representations $\al \co\pi_1(M) \to SU(3)$,
and suppose $\al$ is a nontrivial representation with $\Stab(\al_0) = T_{SU(3)}$
and  one of $\al_1$ or $\al_2$ irreducible. (So we are in case A or case B.)
Set
$$C = \{ [\be] \in R(M,SU(3)) \mid \text{ $\be_i$ is conjugate to $\al_i$ for $i=1,2$} \} .$$
Then $C  \subset R^*(M,SU(3))$ with $C \cong T_{SU(3)}/Z_{SU(3)}$ in Case A
and $C \cong T_{SU(3)}/U(1)$ in case B.
In either case, we see that   $\chi(C)=0.$
\end{prop}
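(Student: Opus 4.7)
The plan is to mirror the argument of Proposition \ref{Prop3.5}, using the transversality hypothesis to realize $C$ as a double coset of stabilizers and then computing those stabilizers case by case. First I would verify that $C \subset R^*(M, SU(3))$: since at least one of the restrictions $\al_i$ is irreducible by hypothesis, any $\al$-invariant proper subspace of $\C^3$ would restrict to an $\al_i$-invariant proper subspace, contradicting irreducibility. Hence every representation in $C$ is irreducible.

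Next, by the transversality and local immersion assumption, the same computation that opens the proof of Proposition \ref{Prop3.5} identifies $C$ with the double coset $\Stab(\al_1) \backslash \Stab(\al_0) / \Stab(\al_2)$. We are given $\Stab(\al_0) = T_{SU(3)} \cong T^2$, and the crucial step is the stabilizer computation. If $\al_i$ is irreducible, Schur's lemma gives $\Stab(\al_i) = Z_{SU(3)} \cong \Z_3$. If instead $\al_i$ is reducible and nonabelian, I would conjugate its image into $S(U(2) \times U(1))$ so that the $U(2)$-component acts irreducibly on $\C^2$; any matrix in $SU(3)$ centralizing the image must then preserve the splitting $\C^3 = \C^2 \oplus \C$, forcing it into the center of $S(U(2) \times U(1))$. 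This gives $\Stab(\al_i) = \{\diag(a,a,\bar a^2) \mid a \in U(1)\} \cong U(1)$, sitting inside $T_{SU(3)}$ and itself containing $Z_{SU(3)}$.

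With these ingredients the two cases are bookkeeping. In Case A, both stabilizers equal $Z_{SU(3)}$; since $T_{SU(3)}$ is abelian, the left and right $Z_{SU(3)}$ translations coincide, yielding $C \cong T_{SU(3)}/Z_{SU(3)}$, which is again a $2$-torus, so $\chi(C)=0$. In Case B, one stabilizer is $Z_{SU(3)}$ and the other is $U(1)$; since both sit inside the abelian group $T_{SU(3)}$ and $Z_{SU(3)} \subset U(1)$, the double coset collapses to $C \cong T_{SU(3)}/U(1) \cong S^1$, so $\chi(C)=0$ again.

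The main technical point will be the centralizer computation in the reducible nonabelian case—verifying that irreducibility of the $U(2)$-summand forces the $SU(3)$-centralizer down to the diagonal $U(1)$ rather than any larger intermediate subgroup, and confirming the inclusion $Z_{SU(3)} \subset U(1)$ that makes the Case B quotient reduce cleanly. The identification of $C$ with the double coset also depends essentially on the transversality assumption preceding Proposition \ref{Prop3.5}; without it one could have additional intersection components to control.
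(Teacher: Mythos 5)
Your proposal is correct and follows essentially the same route as the paper: invoke the double coset description $C \cong \Stab(\al_1)\backslash\Stab(\al_0)/\Stab(\al_2)$ under the transversality hypothesis, then compute the stabilizers via Schur's lemma (irreducible $\Rightarrow Z_{SU(3)}$, reducible nonabelian $\Rightarrow$ the diagonal $U(1)$). You usefully make explicit two points the paper leaves implicit—that irreducibility of one restriction forces irreducibility of $\al$ (hence $C\subset R^*$), and that $Z_{SU(3)}\subset U(1)$ so the Case B double coset collapses to $T_{SU(3)}/U(1)$—but the underlying argument is identical.
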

\begin{proof}
Using the double coset description of the component, we see that
$C = \Ga_1 \backslash \Ga_0 / \Ga_2$ where $\Ga_0 = T_{SU(3)}$.
in case A. we get that $\Ga_1 = \Ga_2 = Z_{SU(3)}$ and the first
result follows. In case B, assuming (wlog) that $\al_1$ is irreducible
and $\al_2$ is reducible, we find that $\Ga_1= Z_{SU(3)}$
and
$$\Ga_2=\left\{\left. \begin{pmatrix} e^ {\th i} & 0 & 0 \\ 0& e^{\th i} & 0 \\ 0 & 0& e^{-2\th i}\end{pmatrix} \right| \th \in [0,2\pi] \right\} \cong U(1),$$
 and the second result follows.
\end{proof}

The only remaining case is Case C, where both $\al_1$ and $\al_2$ are reducible and nonabelian.
There are two possibilities here:
\begin{enumerate}
\item[C1.]  Both $\al_1$ and $\al_2$ can be simultaneously conjugated to lie in $S(U(2)\times U(1)$.
In this case,   $\al=\al_1 \cup_{\al_0} \al_2$ is reducible and lies on a component $C \cong S^1$
 consisting entirely of reducible representations.
 \item[C2.]  After conjugating, $\al_1$ lies in $S(U(2)\times U(1))$
and $\al_2$ lies in $S(U(1)\times U(2))$. In this
case $\al=\al_1 \cup_{\al_0} \al_2$ is irreducible and
its  conjugacy class $[\al]$ is an isolated point in $R^*(M,SU(3)).$
\end{enumerate}

The next result summarizes our discussion and gives a classification of the
possible connected components of $R(M,SU(3))$ for spliced sums satisfying the
transversailty assumption.

\begin{thm}

Suppose $M$ is a spliced sum along knots in $S^3$
and satisfies the transversality assumption.
Then the representation variety $R(M,SU(3))=\bigcup_{j \in J} C_j$ is a disjoint union of components
$C_j$ that are either entirely contained in $R^*(M,SU(3))$ or disjoint from $R^*(M,SU(3))$.
In the first case,
$C_j$ equals one of $$S(U(2)\times U(1))/Z_{SU(3)}, \;\; T_{SU(3)}/Z_{SU(3)},  \;\; T_{SU(3)}/U(1), \;\; \{*\},$$
depending on the level of reducibility of $\al_0, \al_1,\al_2$.
In the second case,  $C_j$ equals $S^1$ or $\{*\}$, the latter occurring only when
$C_j = \{ [\Th]\}$, the trivial representation.
\end{thm}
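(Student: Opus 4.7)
The plan is to give a case-by-case analysis, stratifying representations $\al \co \pi_1(M) \to SU(3)$ by (i) whether $\al$ is trivial, (ii) the stabilizer $\Stab(\al_0)$, and (iii) the reducibility type of the restrictions $\al_1, \al_2$. Each case will be matched to one of the listed component types. Since $\pi_1(T) \cong \Z^2$ is abelian, conjugating so that $\al_0$ is diagonal always gives $\Stab(\al_0) \supseteq T_{SU(3)}$, and the transversality assumption implies that the component $C_{[\al]}$ containing $[\al]$ is diffeomorphic to the double coset $\Stab(\al_1) \backslash \Stab(\al_0) / \Stab(\al_2)$.

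The trivial representation contributes the singleton component $\{[\Theta]\}$, which is disjoint from $R^*(M,SU(3))$. For nontrivial $\al$ with $\Stab(\al_0) \neq T_{SU(3)}$, Propositions \ref{Prop3.4} and \ref{Prop3.5} together yield $C_{[\al]} \cong S(U(2)\times U(1))/Z_{SU(3)}$, entirely inside $R^*(M,SU(3))$. For nontrivial $\al$ with $\Stab(\al_0) = T_{SU(3)}$, I would invoke the trichotomy in Cases A, B, C from the preceding discussion. Proposition \ref{Prop3.6} immediately covers Cases A and B, producing $T_{SU(3)}/Z_{SU(3)}$ and $T_{SU(3)}/U(1)$ respectively, both in $R^*(M,SU(3))$.

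Case C (both $\al_1$ and $\al_2$ reducible and nonabelian) is the remaining piece and must be handled by hand. After conjugation we may put $\al_1$ into $S(U(2)\times U(1))$, and since $\al_0$ commutes with $\al_1$ and preserves the invariant $2$-plane, the image of $\al_2$ must preserve a $2$-dimensional subspace determined by $\al_0$; this gives the subcases C1 (where $\al_2 \in S(U(2)\times U(1))$) and C2 (where $\al_2 \in S(U(1)\times U(2))$). In C1 the full representation $\al$ is reducible, $\Stab(\al_i)$ equals the common central circle $\{\diag(c,c,c^{-2}) : c \in U(1)\}$ for $i=1,2$, and the double coset collapses to $T_{SU(3)}/U(1) \cong S^1$, disjoint from $R^*(M,SU(3))$. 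In C2 one finds $\Stab(\al_1) = \{\diag(c,c,c^{-2})\}$ and $\Stab(\al_2) = \{\diag(d^{-2},d,d)\}$: these are two distinct circles in $T_{SU(3)}$ whose product generates the whole maximal torus with intersection $Z_{SU(3)}$, so the double coset reduces to a single point and $[\al]$ is an isolated irreducible representation.

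The main obstacle is rigorously identifying the local structure of each component with the stated double coset, which is where the transversality assumption is essential: it ensures that the restriction maps $R(X_i,SU(3)) \to R(T,SU(3))$ meet transversely and are local immersions near each intersection point, ruling out spurious deformations mixing different reducibility types. Once this local model is justified, the remaining work is routine centralizer computation in $SU(3)$, and assembling all the cases above yields the full classification.
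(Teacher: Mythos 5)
Your proposal is correct and reconstructs the paper's own argument quite faithfully. The paper does not give a separate proof for this theorem; it explicitly labels it a summary of the preceding discussion, which is exactly Lemmas \ref{Lem3.1} and \ref{Lem3.3}, Propositions \ref{Prop3.4}--\ref{Prop3.6}, and the $C1/C2$ dichotomy -- the same chain you assemble. Your centralizer computations ($\Stab(\al_1)=\{\diag(c,c,c^{-2})\}$, $\Stab(\al_2)=\{\diag(d^{-2},d,d)\}$, their product filling $T_{SU(3)}$ with intersection $Z_{SU(3)}$) match the paper, and your identification of the four component types with the double cosets $\Ga_1 \backslash \Ga_0/\Ga_2$ under the transversality hypothesis is precisely what underlies Propositions \ref{Prop3.5} and \ref{Prop3.6}. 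Two small points worth tightening: the phrase ``$\al_0$ commutes with $\al_1$'' is imprecise since $\al_0$ is literally the restriction of $\al_1$ to $\pi_1(T)$ -- the correct statement is that $\Stab(\al_0)=T_{SU(3)}$ forces any $\al_2$-invariant subspace to be a coordinate subspace (because some element of $\al_0(\pi_1(T))$ has three distinct eigenvalues), which is what produces the $C1/C2$ split; and the role of the transversality assumption is not so much to ``ensure'' those restriction maps are immersions (that is the hypothesis itself) as to guarantee that the set $C=\{[\be]:\be_i\sim\al_i\}$ is open, hence a full connected component rather than merely a subset of one.
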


\begin{rem} \label{Rem3.8}
For components of type C2, which are the isolated points of $R^*(M,SU(3))$, it is possible
to have $\al_1$ conjugate to  $\al_1'$ as $S(U(2) \times U(1))$ representations of $\pi_1(X_1)$,
and $\al_2$ conjugate to $\al_2'$  as $S(U(1) \times U(2))$ representations of $\pi_1(X_2)$,
but $\al_1 \cup_{\al_0} \al_2$ not conjugate to $\al'_1 \cup_{\al'_0} \al'_2$ as $SU(3)$ representations
of $\pi_1(M)$ for the spliced sum $M = X_1 \cup_T X_2.$ This is a consequence of  
the existence of discrete gluing parameters in this context, and
we will return to this issue in Theorem \ref{Thm5.2},
where we enumerate the isolated components of $R^*(M,SU(3))$.
\end{rem}

\section{$SU(3)$ representation varieties of knot complements} \label{Sec4}
In the previous section, we examined the $SU(3)$ representation varieties of spliced sums
and discovered that the only contributions to the $SU(3)$ Casson invariant come from
representations $\al = \al_1 \cup_{\al_0} \al_2$ with $\al_1$ and $\al_2$
reducible, nonabelian representations of the knot complements. In this section, we
study the representation varieties $R(X,SU(3))$ for knot complements.
In general, $R(X,SU(3))$ is a union of three different strata:
\begin{enumerate}
\item[1.] $ R^*(X,SU(3)) $ the stratum of irreducible representations,
\item[2.] $ R^{red}(X,SU(3)) $ the stratum of reducible nonabelian representations, and
\item[3.] $ R^{ab}(X,SU(3)) $ the stratum of abelian representations.
\end{enumerate}

Because our computations of $\tau_{SU(3)}(M)$ for spliced sums
involve only those representations that restrict to reducible, nonabelian
representations on $X_1$ and $X_2$, we concentrate on the stratum
$R^{red}(X,SU(3))$. We shall use the results of \cite{K} to give a
useful description in case $X$ is the complement of a
$(2,q)$ torus knot.
The curious reader is referred to \cite[\S 3]{BHK2} for descriptions
of the other strata. The results presented here are complementary to
those in \cite{BHK2}.

Let $K$ be the $(2,q)$ torus knot and $X = S^3 \setminus \nu K$ its complement.
The knot group $\pi_1(X)$ has presentation
\begin{equation}\label{Eq4.1}\pi_1(X) \cong \langle x,y \mid x^2 = y^q\rangle,\end{equation}
with meridian $\mu = x y^\frac{1-q}{2}$ and longitude $\la = x^2 \mu^{-2q}.$

Every reducible representation $\al \co\pi_1(X) \to SU(3)$
can be conjugated to lie in $S(U(2) \times U(1))$.
Furthermore, every $S(U(2) \times U(1))$ representation of $\pi_1(X)$ is obtained
by twisting an $SU(2)$ representation.  In \cite{K}, Klassen proves that
$R^*(X,SU(2))$ is a union of $q-1$ open arcs, and using this, we shall show
that $ R^{red}(X,SU(3)) $ is a union of $q-1$ open M\"obius bands.

In the next result, we identify $SU(2)$ with the unit quaternions by the map
 $$\begin{pmatrix} a & b \\
    -\bar b & \bar a \end{pmatrix} \mapsto a + bj  \quad \text{ for $a,b\in \C$ with $|a|^2+|b|^2=1$}.$$
 To each $t \in [0,\frac{1}{2}]$ we associate the abelian representation $\be_t:\pi_1(X) \to SU(2)$
 with $\be_t(\mu) = e^{2 \pi i t}$. In this way, we parameterize $R^{ab}(X,SU(2))$ by the closed interval  $[0,\frac{1}{2}]$.

\begin{prop}[Klassen]  \label{Prop4.1}
The representation variety
$R^*(X,SU(2))$ consists of  $(q-1)/2$ open arcs
 given  as follows. For    $k\in\{1, 3,\dots, q-2\}$
and $s\in [0,1]$, define $\be_{k,s}$ by setting
  \begin{eqnarray*}
\be_{k,s}(x) &=& i\cos(\pi s) + j \sin(\pi s),  \\
\be_{k,s}(y) &=&  \cos(\pi k/q) + i\sin(\pi k/q) = e^{k \pi i/q}.
   \end{eqnarray*}
Then  the resulting  path of $SU(2)$ representations $\be_{k,s}$
 are irreducible and have $H^1(X;su(2)_{\be_{k,s}}) = \R$ and
$H^1(Z;\C^2_{\be_{k,s}}) = 0$ for $s\in (0,1).$

When $s=0,1$ the representations $\be_{k,0}$ and $\be_{k,1}$,
are abelian with
$$\be_{k,0}(\mu) =  (-1)^\frac{k-1}{2} e^\frac{ k\pi i}{2q} \quad
\text{ and } \quad \be_{k,1}(\mu) = (-1)^\frac{k+1}{2}  e^\frac{k\pi i}{2q}.$$
Using $[0,\frac{1}{2}]$ to parameterize the abelian  representations,
we see that the arc $\be_{k,s}$ is attached
at the bifurcation points $\left\{\frac{k}{4q}, \frac{2q-k}{4q} \right\}$  (see Figure \ref{Fig2}).
\end{prop}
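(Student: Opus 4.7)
The plan is to use the presentation $\pi_1(X) = \langle x, y \mid x^2 = y^q \rangle$ from (\ref{Eq4.1}) together with the quaternionic identification $SU(2) \cong \{a + bj \mid a,b \in \C, \, |a|^2 + |b|^2 = 1\}$. The element $x^2 = y^q$ generates the center of $\pi_1(X)$, so Schur's lemma forces $\be(x^2) = \be(y)^q \in \{\pm I\}$ for any irreducible representation $\be$. After conjugating so that $\be(y)$ lies in the standard maximal torus $U(1) \subset SU(2)$, write $\be(y) = e^{ik\pi/q}$ for some integer $k$; the centrality condition $\be(y)^q = \pm I$ is then automatic.

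A parity split pins down the range of $k$. If $k$ is even, $\be(x)^2 = I$ gives $\be(x) = \pm I$, so $\be$ is abelian; hence irreducibility requires $k$ odd. With $k$ odd, $\be(x)^2 = -I$ means $\be(x)$ is a trace-zero unit quaternion, i.e.\ a point on the 2-sphere of unit pure-imaginary quaternions. The residual stabilizer of $\be(y)$, namely the circle $\{e^{i\al}\} \subset SU(2)$, acts on this 2-sphere by rotation about the $i$-axis, so after a further conjugation we may normalize $\be(x) = i\cos(\pi s) + j \sin(\pi s)$ with $s \in [0,1]$. The boundary values $s = 0,1$ give $\be(x) = \pm i$, which commutes with $\be(y)$, producing abelian representations; for $s \in (0,1)$, the pair $\be(x), \be(y)$ generates a noncommutative subgroup and $\be_{k,s}$ is irreducible. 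Finally, because $e^{ik\pi/q}$ and $e^{-ik\pi/q}$ are conjugate by $j$, and the exponential is $2q$-periodic in $k$, we may restrict $k$ to odd integers with $1 \le k \le q-2$, yielding exactly $(q-1)/2$ arcs.

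The endpoint values $\be_{k,\ep}(\mu)$ for $\ep = 0,1$ are obtained by direct substitution into $\mu = xy^{(1-q)/2}$. Using $\pm i = e^{\pm i\pi/2}$ and collecting exponents modulo $2\pi$ yields $(-1)^{(k \mp 1)/2} e^{ik\pi/(2q)}$, matching the statement. Comparing with the abelian parametrization $\be_t(\mu) = e^{2\pi i t}$ and exploiting the symmetry $t \leftrightarrow 1-t$ (induced by conjugation by $j$, which identifies $\be_t$ with $\be_{1-t}$) gives the bifurcation points $\{k/(4q),\, (2q-k)/(4q)\}$ on the abelian stratum $[0, \tfrac{1}{2}]$.

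The main technical content is the cohomology assertion $H^1(X; su(2)_{\be_{k,s}}) = \R$ for $s \in (0,1)$. I would carry this out via Fox calculus applied to the single relator $x^2 y^{-q}$ with coefficients twisted by $\ad \circ \be_{k,s}$: the Fox derivative matrix is $3 \times 6$, and one must show that its rank is exactly $4$ at every point of the arc. Together with $H^0 = 0$ (forced by irreducibility), the Euler characteristic identity $\chi(X; su(2)_{\be_{k,s}}) = 0$, Poincar\'e--Lefschetz duality, and the half-lives--half-dies dimension count on the boundary torus, this pins down all the cohomology. The vanishing $H^1(Z; \C^2_{\be_{k,s}}) = 0$ is an analogous and easier computation, using that the standard representation $\C^2$ has no invariant subspaces under the irreducible $\be_{k,s}$. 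The principal obstacle is verifying that the Fox-derivative matrix has uniform rank $4$ across the entire open arc $s \in (0,1)$ rather than dropping rank at special parameter values; this reduces to showing that an explicit trigonometric polynomial in $s$ does not vanish on $(0,1)$.
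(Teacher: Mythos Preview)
The paper does not supply its own proof of this proposition: it is quoted from Klassen's paper \cite{K} (see the sentence preceding the statement), so there is no in-paper argument to compare against.  Your classification of the irreducible arcs is correct and is the standard argument; the endpoint and bifurcation-point computations are routine as you indicate.

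Your cohomology sketch has a numerical slip: a $3\times 6$ matrix cannot have rank $4$.  In the one-relator CW complex for $\langle x,y\mid x^2=y^q\rangle$ the twisted cochain complex is
\[
su(2)\xrightarrow{\;d^0\;} su(2)^{2}\xrightarrow{\;d^1\;} su(2),
\]
with $d^1$ built from the Fox derivatives; what you need is $\operatorname{rank}(d^1)=2$, equivalently $\dim\ker d^1=4$, so that together with $\dim\operatorname{im} d^0=3$ (from $H^0=0$) you obtain $\dim H^1=1$.  Incidentally, the paper itself later proves a statement that yields this immediately: Proposition~\ref{Prop6.1} gives $\dim H^1(X;u(2)_\alpha)=2$ for irreducible $\alpha$, and since $u(2)=su(2)\oplus\R$ with the $\R$ summand trivial and $H^1(X;\R)\cong\R$, subtraction gives $\dim H^1(X;su(2)_{\beta_{k,s}})=1$.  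That route avoids the trigonometric nonvanishing check you flag as the principal obstacle.  (The symbol $Z$ in the $\C^2$ assertion is evidently a typo for $X$; your proposed treatment of that case is adequate.)
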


\begin{figure}[th]
\leavevmode
\includegraphics[scale=.35]{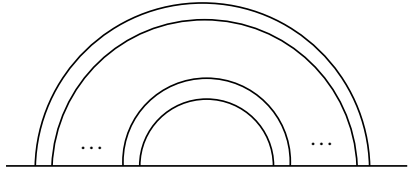}
\caption{The $SU(2)$ representation variety of a $(2,q)$ torus knot} \label{Fig2}
\end{figure}

Observe that the image of the meridian is given by
\begin{equation*}
\be_{k,s}(\mu) =  (i\cos(\pi s) + j \sin(\pi s)) e^{k\pi i \left(\frac{1-q}{2q}\right)},
\end{equation*}
and a quick calculation shows that
$\be_{k,s}(\mu)$ is conjugate to the diagonal matrix
$$\begin{pmatrix}
e^{2 u \pi i} & 0 \\ 0 & e^{- 2 u \pi i}
\end{pmatrix},$$ where $u \in [0,\frac{1}{2}]$ satisfies
$$\cos (2 \pi u)= \cos (\pi s) \sin\left(\tfrac{k(q-1) \pi}{2q}\right).$$
Since $s \in [0,1]$ and
$$\sin\left(\tfrac{k(q-1) \pi}{2q} \right) = \sin \left(
k \left( \tfrac{\pi}{2} - \tfrac{\pi}{2q} \right) \right) =
(-1)^{(k-1)/2}\cos \left(\tfrac{k i }{2q}\right),$$ we see that
\begin{equation}\label{Eq4.2}
u \in
\left(\tfrac{k}{4q},\tfrac{2q-k}{4q}\right).
\end{equation}

Since $\la = x^2 \mu^{-2q}$, then $\be_{k,s}(\la)$ is conjugate to
$$\begin{pmatrix}
-e^{-2q (2u \pi i)} & 0 \\ 0 & -e^{2 q (2 u \pi i)}
\end{pmatrix}.$$
We are interested in the restriction of $\be_{k,s}$ to the boundary torus,
and recall that $R(T,SU(2))$ is modelled by the pillowcase,
which is the quotient of the 2-torus $T^{2}$ by the involution sending $(x,y)$ to $(1-x,1-y)$,
where we think of $T^2$ as $[0,1]\times [0,1]$ with opposite sides identified.
Under this identification, the point $(u,v) \in [0,\frac{1}{2}] \times [0,1]$ in the pillowcase corresponds to the
diagonal representation $\be \co \pi_1(T) \to SU(2)$ with
$$\be(\mu) =\begin{pmatrix}
e^{2 u \pi i} & 0 \\ 0 & e^{-2 u \pi i}
\end{pmatrix} \text{ and }
\be(\la) =\begin{pmatrix}
e^{2 v \pi i} & 0 \\ 0 & e^{-2 v \pi i}
\end{pmatrix}$$
For $s \in [0,1]$, the restriction of $\be_{k,s}$ to the boundary torus
gives a line of slope $-2q$ in the pillowcase connecting
$\left(\frac{k}{4q},0\right)$ to $\left(\frac{2q-k}{4q},0\right)$ and wrapping around vertically $q-k$ times.

Using the operation of {\em twisting}  \cite[\S
3.2]{BHK2}, we give an explicit description of
$R^{red}(X,SU(3))$ as a union of $(q-1)/2$ M\"obius bands, which are
2-dimensional families obtained by twisting the arcs $\be_{k,s}$ by
characters $\chi:\pi_1(X) \to U(1)$.

First, in terms of matrices, if $A = \begin{pmatrix} a & b \\ -\bar{b} & \bar{a} \end{pmatrix}\in SU(2)$
 and $e^{i \th} \in U(1)$, we define the twist of $A$ by  $e^{i \th}$ to be the $S(U(2) \times U(1))$ matrix
$$
 \begin{pmatrix} e^{i \th} & 0 &  0 \\
0 & e^{i \th} & 0\\
0 & 0 & e^{-2i \th} \end{pmatrix}
 \begin{pmatrix} a & b &0 \\ -\bar{b} & \bar{a} & 0 \\
 0 & 0 & 1 \end{pmatrix}
=
\begin{pmatrix} e^{i\th} a & e^{i\th}b &0 \\ -e^{i\th}\bar{b} & e^{i\th}\bar{a} & 0 \\
 0 & 0 & e^{-2i\th} \end{pmatrix}
$$

Given an irreducible representation $\be \co \pi_1(X) \to SU(2)$ and an abelian
representation $\chi: \pi_1(X) \to U(1)$, we define the reducible $SU(3)$ representation
obtained by twisting $\be$ by $\chi$, denoted $\chi \odot \be,$ to be the $S(U(2)\times U(1))$
representation taking an element $\ga \in \pi_1(X)$ to the twist of $\be(\ga)$ by $\chi(\ga).$

Since abelian representations factor through the homology group $H_1(X,\Z)$,
which is generated by the meridian $\mu,$ we see that a representation $\chi\co \pi_1(X) \to U(1)$ is
determined by $\chi(\mu)$.

\begin{defn} \label{Def4.2}
For $e^{i\th} \in U(1)$, let $\chi_{\th}$ be the $U(1)$ representation with $\chi_\th(\mu)=e^{i\th}$.
For $k \in \{1, 3, \ldots, q-2\}$ and $s \in (0,1)$, let $\be_{k,s}$ be the $SU(2)$ representation
described in Proposition \ref{Prop4.1} and define $\al_{k,s,\th} = \chi_\th \odot \be_{k,s}$ to be
the reducible $SU(3)$ representation obtained by twisting
$\be_{k,s}$ by $\chi_\th$.
\end{defn}
Notice that if $\th=\pi,$ the twist of an $SU(2)$ representation
$\be$ by $\chi_\pi$ takes values in the $SU(2)\times \{1\}$
matrices, and a quick calculation shows that
\begin{equation}\label{Eq4.3}
\chi_{\pi} \odot \be_{k,s}\text{ is conjugate to }  \be_{k,1 -s}.
\end{equation}
Thus, for $k \in \{ 1,3,  \ldots, q-2\},$ the 2-dimensional family
$\al_{k,s,\th}$ is parameterized  by
$(s,\th) \in (0,1) \times [0,\pi]$ with identification $(s,0) \sim(1-s,\pi)$.
This gives an open M\"obius band. The next result summarizes our discussion.

 \begin{prop}
 If $X$ is the complement of the $(2,q)$ torus knot, then $R^{red}(X,SU(3))$
 is a union of $\frac{q-1}{2}$ open
  M\"obius bands. The closure of each stratum is an immersed circle in the abelian
  stratum $R^{ab}(X,SU(3))$ with isolated double points.
  \end{prop}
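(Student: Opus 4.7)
The plan is to parameterize $R^{red}(X, SU(3))$ using Klassen's description of the irreducible $SU(2)$ arcs combined with the twisting construction, and then analyze the boundary behaviour in $R^{ab}(X, SU(3))$. First, any reducible nonabelian $\al\co\pi_1(X) \to SU(3)$ can be conjugated into $S(U(2) \times U(1))$, and any such representation arises as $\chi \odot \be$ with $\be$ an $SU(2)$ representation and $\chi$ a $U(1)$ character; the nonabelian hypothesis forces $\be$ itself to be irreducible. By Proposition \ref{Prop4.1}, these irreducible $SU(2)$ representations form $\tfrac{q-1}{2}$ open arcs $\be_{k,s}$ indexed by $k \in \{1, 3, \ldots, q-2\}$, $s \in (0,1)$. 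Since $H_1(X; \Z) \cong \Z$ is generated by $\mu$, the characters form a circle $\chi_\th$ with $\chi_\th(\mu) = e^{i\th}$, $\th \in \R/(2\pi\Z)$. Thus for each $k$ one obtains a 2-dimensional family $\al_{k,s,\th} = \chi_\th \odot \be_{k,s}$ naturally parameterized by a cylinder $(0,1) \times S^1$.

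Next I would determine the quotient by the remaining $SU(3)$-conjugations. The key identification comes from (\ref{Eq4.3}): $\chi_\pi \odot \be_{k,s}$ is conjugate to $\be_{k, 1-s}$ inside $SU(3)$. Twisting by $\chi_\th$ on both sides yields $\al_{k, s, \th + \pi}$ conjugate to $\al_{k, 1-s, \th}$, giving the involution $(s, \th) \mapsto (1-s, \th + \pi)$ on the cylinder. This involution is free (there is no simultaneous solution to $s = 1/2$ and $\th \equiv \th + \pi$) and orientation-reversing (its Jacobian has determinant $-1$), so the quotient is an open M\"obius band. To complete this step I must rule out further identifications: different values of $k$ are distinguished by $\tr \al_{k,s,0}(y) = e^{k\pi i/q}+e^{-k\pi i/q} + 1$, and within a fixed $k$ any additional conjugation would have to come from the normalizer of $S(U(2) \times U(1))$ in $SU(3)$, which modulo $S(U(2) \times U(1))$ itself is the $\Z/2$ generating the above involution.

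For the closure, as $s \to 0^+$ or $s \to 1^-$ the representation $\be_{k,s}$ becomes abelian, with $\be_{k,0}(\mu) = z_0$ and $\be_{k,1}(\mu) = -z_0$, where $z_0 = (-1)^{(k-1)/2} e^{k\pi i/(2q)}$. Hence $\al_{k,0,\th}(\mu) = \diag(e^{i\th} z_0, e^{i\th}\bar z_0, e^{-2i\th})$ and $\al_{k,1,\th}(\mu) = \diag(-e^{i\th}z_0, -e^{i\th}\bar z_0, e^{-2i\th})$. Tracking the boundary of the M\"obius band under the M\"obius identification from the previous step, the two arcs merge into a single circle $\phi \mapsto \diag(e^{i\phi} z_0, e^{i\phi}\bar z_0, e^{-2i\phi})$, $\phi \in \R/(2\pi\Z)$, lying in the abelian stratum $R^{ab}(X,SU(3)) \cong T_{SU(3)}/S_3$. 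The derivative is nowhere zero, so the map lifts to a smooth immersion into the torus $T_{SU(3)}$; passing to the $S_3$ quotient, self-intersections of the image correspond to pairs $\phi_1 \ne \phi_2$ whose triples differ by some nontrivial $\sigma \in S_3$ (modulo $\Z^3$). Solving the resulting linear congruences shows that the two 3-cycles in $S_3$ each produce finitely many double points (three pairs for generic $z_0$), while the transpositions contribute additional solutions only when $\arg(z_0)$ satisfies special resonance conditions ($4\arg(z_0) \in \pi\Z$). Transversality at each crossing is automatic since the permutation $\sigma$ acts nontrivially on the common velocity vector, so the double points are isolated.

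The most delicate step is the M\"obius band parameterization: one must verify that the \emph{only} conjugation identifications on the cylinder $(0,1) \times S^1$ are those generated by (\ref{Eq4.3}). This requires a careful analysis of the normalizer of $S(U(2) \times U(1))$ in $SU(3)$ and of the possible intertwiners between two distinct $S(U(2) \times U(1))$-realizations of the same $SU(3)$ representation. The remaining steps reduce to explicit linear algebra that is routine once the parameterization is pinned down.
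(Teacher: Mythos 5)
Your Möbius-band argument follows the same route as the paper (Klassen's arcs plus the twisting construction plus the identification (\ref{Eq4.3})), and your closure analysis, which the paper states without proof, is a reasonable elaboration. However, the sentence invoking the normalizer is wrong: the normalizer of $S(U(2)\times U(1))$ in $SU(3)$ is $S(U(2)\times U(1))$ itself, because any normalizing element must preserve the eigenspace decomposition $\C^2\oplus\C$ and the two summands cannot be interchanged for dimension reasons, so the quotient you describe is trivial, not $\Z/2$. The $\Z/2$ identification on the cylinder is already present inside $U(2)$-conjugation: identifying $S(U(2)\times U(1))\cong U(2)$ as in (\ref{Eq6.1}), $\al_{k,s,\th}$ corresponds to $\chi_\th\cdot\be_{k,s}$, whose determinant character is $\chi_{2\th}$; if $\chi_\th\cdot\be_{k,s}$ is $U(2)$-conjugate to $\chi_{\th'}\cdot\be_{k,s'}$, comparing determinants forces $\th'\equiv\th$ or $\th'\equiv\th+\pi$, and dividing off the central character reduces the first case to $s=s'$ (Klassen) and the second to $\chi_\pi\cdot\be_{k,s}$ conjugate to $\be_{k,s'}$, i.e. $s'=1-s$ by (\ref{Eq4.3}). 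This determinant computation is exactly what fills the gap you correctly flag as the delicate step. Once that is patched, your count of isolated double points in $T_{SU(3)}/S_3$ (three from each $3$-cycle, with transpositions contributing only for excluded values of $\arg z_0$) is consistent with a direct check using the velocity $(1,1,-2)$ of the limiting abelian circle.
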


\section{Isolated components of $R^*(M,SU(3))$} \label{Sec5}

In this section, we enumerate the isolated components in $R^*(M,SU(3))$ for $M$ the spliced sum
along  torus knots of type $(2,q_1)$ and a $(2,q_2)$.
Let $K_1$ and $K_2$ be $(2, q_1)$ and $(2, q_2)$ torus knots with complements $X_1$ and $X_2$,
and write $\al= \al_1 \cup_{\al_0} \al_2$
according to the decomposition $M = X_1 \cup_T X_2.$
Recall from Section \ref{Sec3} that $[\al]$ is isolated
precisely when $\al$ is irreducible and both $\al_1$ and $\al_2$ are reducible. These are
the type C2 components from Section \ref{Sec3},
and they are the only components  that contribute nontrivially to the Casson $SU(3)$ invariant.
Note further that such a representation can be conjugated so that $\al_1$ reduces to
$S(U(2)\times U(1)), \; \al_2$ reduces to $S(U(1)\times U(2))$, and $\al_0$ is diagonal.

We can describe $\al_1$ as the twist  of an $SU(2)$ representation $\be_1$
by a character $\chi_{\th_1},$ and we get a similar statement for $\al_2$
using the following refinement of twisting.
For this purpose,  we set $\odot_1=\odot$ and define $\odot_2$ to be the twisting
induced by the map which, for $e^{i \th} \in U(1)$ and $A = \begin{pmatrix} a & b \\ -\bar{b} & \bar{a} \end{pmatrix}\in SU(2)$,
gives the $S(U(1)\times U(2))$ matrix
$$ \begin{pmatrix} e^{2i \th} & 0 &  0 \\
0 & e^{-i \th} & 0\\
0 & 0 & e^{-i \th} \end{pmatrix}
 \begin{pmatrix} 1&0&0\\ 0& a & b \\ 0& -\bar{b} & \bar{a} \end{pmatrix}
 =
\begin{pmatrix} e^{2i\th} & 0 & 0 \\ 0 & e^{-i \th} a & e^{-i\th}b\\ 0 & -e^{-i\th}\bar{b} & e^{-i\th}\bar{a}  \end{pmatrix}.
$$
On the level of representations, if $\be_2\co\pi_1(X_2) \to SU(2)$ and $\chi_{\th_2}\co\pi_1(X_2) \to U(1),$
then set $\chi_2 \odot_2 \be_{\th_2}$
to be the $S(U(1)\times U(2))$ representation obtained by twisting $\be_2$ by $\chi_{\th_2}$ in this way.
Assume now $\al_1 =  \chi_{\th_1}  \odot_1 \be_1$ and $\al_2 =  \chi_{\th_2}\odot_2 \be_2 $ for
 $SU(2)$ representations  $\be_1, \be_2$ and characters $\chi_{\th_1}, \chi_{\th_2}$

\begin{rem} \label{Rem5.1} Note that $\be_1$ and $\be_2$ are both irreducible by Lemma \ref{Lem3.1}
since  $\al = \al_1 \cup_{\al_0} \al_2$ is irreducible.
\end{rem}

The pair  $\al_1\co \pi_1(X_1) \to S(U(2)\times U(1))$
$\al_2\co \pi_1(X_2) \to S(U(1)\times U(2))$  will
extend to a representation $\al \co \pi_1(M) \to SU(3)$
if and only if their restrictions to $\pi_1(T)$ agree, namely if and only if
$\al_1(\mu_1) = \al_2(\la_2)$ and $\al_2(\mu_2) = \al_1(\la_1).$

\begin{thm}\label{Thm5.2}
Suppose $M$ is the spliced sum along torus knots $K_1$ and $K_2$ of type $(2,q_1)$ and $(2,q_2)$.
Then the number of isolated conjugacy classes in $R^*(M,SU(3))$ is given by
$$16 \la'_{SU(2)}(K_1) \la'_{SU(2)}(K_2) =  \frac{(q_1^2-1)(q_2^2-1)}{4}.$$
\end{thm}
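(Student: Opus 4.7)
The plan is to parameterize the isolated conjugacy classes via the $SU(2)$ arcs of Section~\ref{Sec4} together with character twists, reduce the gluing condition to a system of eigenvalue matchings, solve it case by case, and then quotient by residual symmetries.

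First, I would use Proposition~\ref{Prop4.1} and Definition~\ref{Def4.2} to write every C2 component as $\alpha = \alpha_1 \cup_{\alpha_0} \alpha_2$ with $\alpha_1 = \chi_{\theta_1} \odot_1 \beta_{k_1, s_1}$ and $\alpha_2 = \chi_{\theta_2} \odot_2 \beta_{k_2, s_2}$. After conjugating so that $\alpha_0$ is diagonal, the four boundary matrices $\alpha_1(\mu_1), \alpha_1(\lambda_1), \alpha_2(\mu_2), \alpha_2(\lambda_2)$ all become simultaneously diagonal, and an explicit computation using the relation $\lambda_i = x^2 \mu_i^{-2q_i}$ from (\ref{Eq4.1}) expresses their eigenvalues as trigonometric functions of $(u_i, \theta_i)$, where $u_i \in \left(\tfrac{k_i}{4q_i}, \tfrac{2q_i - k_i}{4q_i}\right)$ is the meridional parameter from Proposition~\ref{Prop4.1} and the longitudinal parameter satisfies $v_i \equiv \tfrac{1}{2} - 2q_i u_i \pmod 1$.

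Second, the splicing identifications $\mu_1 = \lambda_2$ and $\lambda_1 = \mu_2$ impose that the multi-set of eigenvalues of $\alpha_1(\mu_1)$ coincides with that of $\alpha_2(\lambda_2)$ (and symmetrically for $\lambda_1, \mu_2$). Since $\lambda_i$ lies in the commutator subgroup of $\pi_1(X_i)$, the twist satisfies $\chi_{\theta_i}(\lambda_i) = 1$, so $\alpha_i(\lambda_i)$ always has $1$ as an eigenvalue. Matching that $1$ against one of the three eigenvalues of $\alpha_j(\mu_j)$ for $j \ne i$ produces a three-way split at each knot, for nine cases in total. In each case the matching collapses to a small system of linear congruences in $(u_1, u_2, \theta_1, \theta_2)$ modulo $\Z$, whose solutions inside the open intervals supplied by Proposition~\ref{Prop4.1} can be enumerated directly. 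Summing across the nine cases and across all $\frac{(q_1-1)(q_2-1)}{4}$ pairs of arcs $(k_1, k_2)$ should yield the target $\frac{(q_1^2-1)(q_2^2-1)}{4}$, which agrees with $16\,\lambda'_{SU(2)}(K_1)\,\lambda'_{SU(2)}(K_2)$ via the classical formula $\lambda'_{SU(2)}((2,q)) = (q^2-1)/8$.

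The principal obstacle will be checking that the enumeration is faithful: distinct parameter tuples $(k_i, s_i, \theta_i)$ may give $SU(3)$-conjugate representations of $\pi_1(M)$ because of (i) the M\"obius identification $(s,0) \sim (1-s,\pi)$ on each band, (ii) the Weyl group action on $\alpha_0$ that permutes the three diagonal entries and thus cyclically rearranges the nine cases, and (iii) the discrete gluing parameters of Remark~\ref{Rem3.8} coming from $\Stab(\alpha_0)/(\Stab(\alpha_1) \cdot \Stab(\alpha_2))$. Balancing these over- and under-counts so that each isolated $[\alpha] \in R^*(M,SU(3))$ contributes exactly once is the heart of the argument, and is what ultimately fixes the numerical factor $16$ in the final formula.
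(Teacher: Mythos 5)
The strategy you outline is in the right family — parameterize via twisted $SU(2)$ arcs, impose the boundary matchings, count, and quotient by residual symmetries — but what you have written is a plan, not a proof, and the three things you defer are precisely the places where the real content lives.

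First, the paper's pivotal step is not among your ``nine cases.'' Once $\al_1$ is placed in $S(U(2)\times U(1))$, $\al_2$ in $S(U(1)\times U(2))$, and $\al_0$ made diagonal, the positions of the forced $1$'s are pinned down: $\al_1(\la_1)$ has its $1$ in the $(3,3)$ slot and $\al_2(\la_2)$ in the $(1,1)$ slot, so the splice relations force $\al_1(\mu_1)$ to have a $1$ in the $(1,1)$ slot and $\al_2(\mu_2)$ a $1$ in the $(3,3)$ slot. Working out what this means for $\al_1 = \chi_{\th_1}\odot_1 \be_1$ shows that $\be_1(\mu_1)$ must be $\diag(e^{-i\th_1},e^{i\th_1})$, i.e.\ that the twist parameter $\th_1$ is \emph{determined} by the $SU(2)$ parameter $u_1$ via $\th_1 = 2\pi u_1$. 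This collapses the two-dimensional M\"obius band of Definition~\ref{Def4.2} to a one-dimensional curve $\ga_1$ in $T^2$, and likewise for $\ga_2$. Without this reduction the intersection count has no meaning; your proposal neither proves it nor flags it as the step that makes the problem one-dimensional, and instead sets up a larger nine-way case analysis whose resolution you do not attempt.

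Second, you assert that summing ``should yield'' $\frac{(q_1^2-1)(q_2^2-1)}{4}$, but the curve-intersection count $(q_1-k_1)(q_2-k_2)$ per pair $(k_1,k_2)$ — which is where all the slopes and wrapping numbers of $\ga_1,\ga_2$ in the torus enter, and which sums to $\frac{(q_1^2-1)(q_2^2-1)}{16}$ — is never derived. Third, the discrete gluing factor you defer to the end is exactly a $\Z_2\times\Z_2$ action by explicit normalizer elements (conjugation by $A_1$ and $A_2$ in the paper's notation), which multiplies the count by $4$, not $9$, not something to be ``balanced''; identifying this group and verifying it acts freely on the count is not an afterthought but a computation in its own right. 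As written, your proposal assumes the answer it is meant to establish and leaves the two numerical inputs (the $16$-denominator and the $\times 4$) unproven.
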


\begin{proof}
Using  \eqref{Eq4.1}  and  the splice relations,
we find that $\pi_1(M)$ has presentation
$$\pi_1(M) = \langle x_1, y_1, x_2, y_2 \mid x_1^2 = y_1^{q_1}, x_2^2 = y_2^{q_2}, \mu_1=\la_2, \la_1=\mu_2 \rangle,$$
where $\mu_1= x_1 y_1^\frac{q_1-1}{2}, \la_1 = x_1^2 \mu_1^{-2q_1}$ and $\mu_2= x_2 y_2^\frac{q_2-1}{2}, \la_2 = x_2^2 \mu_2^{-2q_2}.$
Assume $\al=\al_1\cup_{\al_0}\al_2$ is an irreducible representation of $\pi_1(M)$ with $\al_1$ and $\al_2$ both
reducible, and conjugate so that $\al_1$ is in $S(U(2)\times U(1))$ and $\al_2$ is in $S(U(1)\times U(2)).$

Because the longitudes lie in their commutator subgroups, reducibility of $\al_1$ implies that
$\al_1(\la_1)$ must have a 1 in the lower right-hand corner, and similarly
$\al_2(\la_2)$ must have a 1 in the upper
left-hand corner.
Notice that twisting does not alter the image of the longitude since $\chi_{\th_i}(\la_i) = 1$ for any $\th_i\in[0,\pi].$
Thus, if  $\al_1 = \chi_{\th_1} \odot_1 \be_1 $ and $\al_2 = \chi_{\th_2} \odot_2 \be_2 $, then
the only way to have a 1 in the upper right-hand corner of $\al_1(\mu_1)$
and also in the lower right-hand corner of $\al_2(\mu_2)$ is if
$$\be_1(\mu_1) = \begin{pmatrix} e^{-\th_1 i} & 0\\
0 & e^{\th_1  i}
\end{pmatrix}
\quad \text{ and } \quad
\be_2(\mu_2) = \begin{pmatrix} e^{-\th_2 i} & 0\\
0 & e^{\th_2 i}
\end{pmatrix}.$$

In that case,
$$\be_1(\la_1) = \begin{pmatrix} -e^{2q_1\th_1 i} & 0\\
0 & -e^{-2 q_1 \th_1  i} \end{pmatrix}
\quad \text{ and } \quad
\be_2(\la_2) = \begin{pmatrix} -e^{2q_2\th_2 i} & 0\\
0 & -e^{-2q_2 \th_2 i} \end{pmatrix}.$$

If $\al_1 = \chi_{\th_1} \odot_1 \be_1 $ and $\al_2 =  \chi_{\th_2}   \odot_2 \be_2$,
an easy compuation shows
 $$ \al_1(\mu_1) =   \begin{pmatrix}
1 & 0 & 0\\ 0 & e^{2\th_1  i} & 0\\ 0 & 0 & e^{- 2\th_1  i} \end{pmatrix},
\quad \al_1(\la_1) = \begin{pmatrix}
-e^{ 2q_1\th_1  i} & 0 & 0\\ 0 & -e^{- 2q_1\th_1  i} & 0\\ 0 & 0 & 1 \end{pmatrix}, $$
$$ \al_2(\mu_2) =   \begin{pmatrix}
e^{2\th_2  i} & 0 & 0\\ 0 & e^{-2\th_2  i} & 0\\ 0 & 0 & 1 \end{pmatrix},
\quad  \al_2(\la_2) = \begin{pmatrix}
1 & 0 & 0\\ 0 & -e^{ 2q_2\th_2 i} & 0\\ 0 & 0 & -e^{ - 2q_2\th_2 i}\\ \end{pmatrix}.$$

The results of the previous section imply  that $\be_1$ and $\be_2$ are conjugate to
representations $\be_{k_1,s_1}$ and $\be_{k_2,s_2}$ of Proposition \ref{Prop4.1}
for some $k_1 =1,3,\dots, q_1-2$ and $k_2 =1,3,\dots, q_2-2$ and $s_1,s_2 \in (0,1)$.
Notic
As noted in Section \ref{Sec4}, $\be_{k_1,s_1}(\mu_1)$  and $  \be_{k_2,s_2}(\mu_2)$ are
conjugate to
$$\begin{pmatrix} e^{ -2 u_1 \pi i} & 0 \\ 0 &e^{ 2 u_1 \pi i}\end{pmatrix}
  \quad \text{ and } \quad \begin{pmatrix} e^{-2u_2 \pi i}& 0 \\ 0 &e^{ 2u_2 \pi i}\end{pmatrix}$$
respectively, where $u_1,u_2$ satisfy
$$\cos (2\pi u_1)= \cos (\pi s_1) \sin\left(\tfrac{k_1(q_1-1) \pi}{2q_1}\right)  \text{ and }
\cos (u_2)= \cos (\pi s_2)
 \sin\left(\tfrac{k_2(q_2-1) \pi}{2q_2}\right)$$
and $u_1 \in \left( \frac{k_1}{4q_1}, \frac{2q_1-k_1}{4q_1}\right)$ and $u_2 \in \left( \frac{k_2}{4q_2}, \frac{2q_2-k_1}{4q_2}\right)$.

Fix $k_1$ and $k_2$ as above and set $\th_1 = 2\pi u_1$ and $\th_2 = 2 \pi u_2$.
Consider the two paths
$\al_{1,s_1} = \chi_{\th_1}  \odot_1   \be_{k_1,s_1}$ and $\al_{2,s_2} =  \chi_{\th_2}\odot_2 \be_{k_2,s_2} $
of reducible $SU(3)$ representations defined for $s_1,s_2 \in (0,1).$
(We conjugate $\be_{k_1,s_1}$ and $\be_{k_2,s_2}$ so that $\be_{k_1,s_1}(\mu_1)$
and $\be_{k_2,s_2}(\mu_2)$ are both diagonal in $SU(2).$)
Notice that the upper left-hand entry of $\al_{1,s_1}(\mu_1)$ is always equal to 1,
as is the lower right-hand entry of $\al_{2,s_2}(\mu_2)$.

We project $\al_{1,s_1}$ and $\al_{2,s_2}$ to the torus $T^2$
by sending $\al_{1,s}$ to $(e^{2\th_1 i}, -e^{2q_1\th_1 i})$
and  $\al_{2,s}$ to $(-e^{2q_2\th_1 i}, e^{2\th_2 i})$.
In terms of the meridians and longitudes, these maps can be seen as follows.
For $\al_{1,s_1}$ the first coordinate is just the $(2,2)$ entry of the image of $\mu_1$
and the second is the $(1,1)$ entry of the image of $\la_1$.
For $\al_{2,s_2}$, this is reversed, the first coordinate is the $(2,2)$ entry of the image of $\la_2$
and the second is the $(1,1)$ entry of the image of $\mu_2$.

Let $\ga_i$ denote the curve in  $T^2$ resulting from this mapping of $\al_{i,s_i}$ for $i=1,2.$
Then $\ga_1$ has slope $q_1$ and is parameterized by
$\th_1 \in \left( \frac{k_1 \pi}{2q_1}, \frac{(2q_1-k_1)\pi}{2q_1}\right)$.
Thus we see that $\ga_1$ wraps around the 2-torus vertically $q_1-k_1$ times.
Likewise, $\ga_2$ has slope $\frac{1}{q_2}$ and is parameterized by
$\th_2 \in \left( \frac{k_2 \pi}{2q_2}, \frac{(2q_2-k_2)\pi}{2q_2}\right)$,
and it follows that $\ga_2$ wraps around the 2-torus horizontally $q_2-k_2$ times.
From this, one sees that $\ga_1$ and $\ga_2$ intersect in $(q_1-k_1)(q_2-k_2)$ points.
(One can perform the computation in homology by adding a horizontal segment to $\ga_1$ that misses $\ga_2$
and   a vertical segment to $\ga_2$ that misses $\ga_1.$)

Of course, the intersection points of $\ga_1$ and $\ga_2$
exactly coincide with choices of $\al_{1,s_1}$ and $\al_{2,s_2}$ that extend
to an irreducible $SU(3)$ representation of $\pi_1(M)$, and each of these is an
isolated points in $R^*(M,SU(3))$.

Summing over $k_1 \in \{1,3,\dots, q_1-2\}$ and $k_2 \in \{1,3,\dots, q_2-2\}$
and setting $j_1 = \frac{k_1-1}{2}$
and  $j_2 = \frac{k_2-1}{2}$, we compute  that
\begin{align*}
 \sum_{j_1=1}^\frac{q_1-1}{2} \sum_{j_2=1}^\frac{q_2-1}{2}
(q_1-2j_1+1)(q_2-2j_2+1) &=  \left( \sum_{j_1=1}^\frac{q_1-1}{2}
q_1-2j_1+1 \right)
\left( \sum_{j_2=1}^\frac{q_2-1}{2} q_2-2j_2+1 \right)\\
&= \frac{(q_1^2-1)(q_2^2-1)}{16}.
\end{align*}
 
We now take into account the fact that the conjugacy class of 
$\al_1 \cup_{\al_0} \al_2$ on  
$M= X_1 \cup_T X_2$ is not  determined by the 
conjugacy classes of $\al_1$ on $X_1$ and $\al_2$ on $X_2$ (cf. Remark \ref{Rem3.8}).
Suppose as above  $\al_0\co\pi_1(T) \to SU(3)$  is abelian with $\Stab(\al_0) = T_{SU(3)}$,
the maximal torus, and  consider the effect of conjugating by an element in $SU(3)$ that normalizes $T_{SU(3)}$.
(Recall   $N_{T_{SU(3)}}/Z_{T_{SU(3)}} \cong S_3$, the symmetric group on three letters.)
On $X_1$, we further require that the conjugating element preserve $S(U(2) \times U(1))$,
and on $X_2$  that it preserve $S(U(1) \times U(2))$.
Specific elements are given by the matrices
$$A_1= \begin{pmatrix} 0 & 1 & 0 \\ -1 & 0 & 0 \\ 0& 0 & 1 \end{pmatrix} \quad \text{ and } \quad 
A_2= \begin{pmatrix} 1 & 0 & 0 \\ 0 & 0 & 1 \\ 0& -1 & 0 \end{pmatrix}.$$
Conjugating   $\al_i$ by $A_i$ gives rise
to an action of $\Z_2$ which switches the order of the two eigenvalues of $\al_i(\mu_i)$ not equal to $1$. 
The $\Z_2$ actions gives us discrete gluing parameters, 
and their overall effect on our count is to multiply by a factor of four.
Thus, we see that
the total number of isolated components in $R^*(M,SU(3))$ is $\frac{1}{4} (q_1^2-1)(q_2^2-1),$
and because the Casson invariant of the $(2,q)$ torus knot equals $\frac{1}{8}(q^2-1)$,
we obtain the desired formula.
\end{proof}

\section{Cohomology Calculations for $(p,q)$-torus knots} \label{Sec6}

In the following sections we will show that the spectral flow to each of these $SU(3)$
representations is even. We choose a nice path of representations
connecting the trivial representation to these $SU(3)$ representations
and compute at which points the dimension of kernel of the odd signature operator
with the boundary conditions from Definition
\ref{Def2.4} jumps.

Let $K$ be the $(p,q)$-torus knot in $S^3$ and $X = S^3 \setminus \nu K$ its complement. 
We identify $T$ (as in Section \ref{Sec2}) with $\partial X$,
such that the inclusion $j\co T = \partial X \to X$ carries $\lambda$ to a
null-homologous loop in $X$. We orient $X$ so that $-\partial X =
T$, and we put a metric on $X$ such that a
collar of $X$ is isometric to $[0,1]\times T$. The form $dm$ on $T$
extends to a closed $1$-form on $X$ generating the first cohomology
$H^1(X;\R)$, which we will continue to denote $dm$. In
this
section we will compute $\ker(j^*)$ and $\im(j^*)$, where
$j^*\co H^{i}(X;u(3)_\al) \to
H^{i}(\partial X;u(3)_{j^*\al})$, $\al\co \pi_1(X) \to
S(U(2)\times U(1))$ is a representation, and $S(U(2)\times U(1))$ acts on $su(3)$ via the
adjoint representation.

If we identify $S(U(2)\times U(1))$ with $U(2)$ via
\begin{equation}\label{Eq6.1}
\left(\begin{matrix}
t A & 0\\
0 & t^{-2}
\end{matrix}\right) \mapsto
t A
\end{equation}
where $|t|=1$ and $A \in SU(2)$, then $su(3)$ decomposes invariantly
with respect to the adjoint action of $S(U(2)\times U(1))$ as
$$
su(3) = u(2) \oplus \C^2,
$$
where $t A \in U(2)$ acts on $u(2)$ via the adjoint representation
and on $\C^2$ via multiplication with $t^3A$.
If $F$ is the covering from the $U(2)$ representation space of
$\pi_1(X)$ to itself given by $F(\al)(w) := t^3A$ where $\al(w)=
tA$   with $|t|=1$
and $A \in SU(2)$, the twisted cohomology splits as
$$H^i(X;su(3)_\al) =
H^i(X;u(2)_\al) \oplus H^i(X;\C^2_{F(\al)}),$$
where $\al$ acts by the adjoint representation on $u(2)$ and
$F(\al)$ acts by the defining representation on $\C^2$. 
In this section, we concentrate on the case of $u(2)$ coefficients.
There are analogous computations for the cohomology groups with $\C^2$ coefficients,
see \cite[Section 6.1]{BHKK} and \cite[Section 3.1]{BHK2} for instance, but    
 these computations are not needed here.

\begin{prop}\label{Prop6.1}
Let $\al$ be an $U(2)$ representation of $\pi_1(X)$, where
$U(2)$ acts on
$u(2)$ via the adjoint representation. Then
\begin{align}
\label{Eq6.2}\dim H^0(X;u(2)_\al)&=
\begin{cases}
4 & \text{if } \al \text{ is central},\\
2 & \text{if } \al \text{ is abelian, but not central},\\
1 & \text{otherwise}.
\end{cases}\\
\label{Eq6.3}\dim H^1(X;u(2)_\al)&=
\begin{cases}
4 & \text{if }\al \text{ is abelian and }\al(x^p) \text{ is central},\\
2 & \text{otherwise}.\\
\end{cases}
\end{align}
\end{prop}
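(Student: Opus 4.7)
The plan is to use asphericity of the torus knot complement $X$ to pass to group cohomology, and then run a Mayer--Vietoris argument on the natural amalgamated product structure of $G := \pi_1(X) = \langle x, y \mid x^p = y^q\rangle$. Writing $z = x^p = y^q$ for the generator of the infinite cyclic center, $G$ is the amalgam $\langle x\rangle *_{\langle z\rangle} \langle y\rangle$. Since $X$ is a $K(G,1)$, $H^i(X;u(2)_\al) \cong H^i(G;u(2)_\al)$, and Mayer--Vietoris with $V := u(2)$ and the $\mathrm{Ad}\circ\al$ action gives
\[
0 \to H^0(G;V) \to V^A \oplus V^B \xrightarrow{\phi} V^Z \to H^1(G;V) \to H^1(\Z;V_A)\oplus H^1(\Z;V_B) \xrightarrow{\psi} H^1(\Z;V_Z) \to H^2(G;V) \to 0,
\]
where $A=\al(x)$, $B=\al(y)$, $Z=\al(z)$, and $V^T := \ker(\mathrm{Ad}(T)-I)$. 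Because $A^p = Z = B^q$, we have $V^A, V^B \subseteq V^Z$, and $\phi(v_1,v_2) = v_1 - v_2$.

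I would next exploit the fact that $\mathrm{Ad}(T)$ is orthogonal with respect to the trace form on $u(2)$ to identify $H^1(\Z;V_T) = V/\im(\mathrm{Ad}(T)-I)$ canonically with $V^T$, and under these identifications the second map $\psi$ becomes the transpose of $\phi$. Consequently $\mathrm{rank}(\phi) = \mathrm{rank}(\psi) =: k = \dim(V^A + V^B)$ as subspaces of $V^Z$, and extracting dimensions from the six--term sequence yields
\[
\dim H^0(G;V) = \dim V^A + \dim V^B - k, \qquad \dim H^1(G;V) = \dim V^A + \dim V^B + \dim V^Z - 2k.
\]

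The proof is finished by a short case analysis relying on two standard facts: for $T \in U(2)$ central one has $V^T = u(2)$ of dimension $4$, while for $T$ non-central $V^T$ is the Cartan subalgebra through $T$, of dimension $2$. If $\al$ is non-abelian then $A, B$ lie in distinct Cartans meeting only in $\R \cdot iI$, so $\dim(V^A + V^B) = 3$; moreover $Z$ commutes with the non-abelian image of $\al$ and hence must be central in $U(2)$, so $\dim V^Z = 4$, giving $\dim H^0 = 1$ and $\dim H^1 = 2$. If $\al$ is abelian then $A, B$ share a common Cartan and $V^A = V^B$ coincides with that Cartan unless $\al$ is fully central; $\dim H^1$ then comes out to $4$ when $Z$ is central in $U(2)$ (so $\dim V^Z = 4$) and to $2$ otherwise (in which case $V^A = V^B = V^Z$), matching the stated dichotomy. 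The $H^0$ formula falls out of the same data.

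The main obstacle is verifying cleanly that $\phi$ and $\psi$ are mutually dual under the trace pairing so that their ranks coincide; this is the step where the asphericity of $X$ (ensuring the Mayer--Vietoris sequence terminates at $H^2$) and the unitarity of $\mathrm{Ad}\circ\al$ both enter. Once this duality is in hand, the rest of the argument is pure bookkeeping about how abelian and non-abelian subgroups of $U(2)$ sit relative to each other and to the center.
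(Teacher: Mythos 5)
Your Mayer--Vietoris approach, splitting $\pi_1(X)$ as the amalgam $\langle x\rangle *_{\langle z\rangle}\langle y\rangle$ and reducing everything to invariants and coinvariants of cyclic subgroups, is a genuinely different route from the paper's proof, which diagonalizes $\al(x)$ and computes the cocycle and coboundary spaces directly from the bar resolution. Both exploit the presentation $\langle x,y\mid x^p=y^q\rangle$, but the amalgam argument isolates more cleanly the only data that matters, namely the relative positions of $V^A$, $V^B$, $V^Z$ inside $u(2)$, and it automatically produces $H^2$ as well. One repair is needed to the step you flag as the main obstacle: $\psi$ goes from $V_A\oplus V_B$ to $V_Z$, not from $V^Z$ to $V^A\oplus V^B$, so it cannot literally be the transpose of $\phi$. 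What is true, and is all you need, is that after identifying $V_T=V/\im(\mathrm{Ad}(T)-\Id)$ with $V^T$ by orthogonal projection, the restriction $H^1(\langle x\rangle;V)\to H^1(\langle z\rangle;V)$ becomes multiplication by $p$ on $V^A\hookrightarrow V^Z$ (and by $q$ for $y$), since the norm $1+\mathrm{Ad}(A)+\cdots+\mathrm{Ad}(A)^{p-1}$ acts by $p$ on $V^A$. Thus $\psi(v_1,v_2)=pv_1-qv_2$ and $\mathrm{rank}(\psi)=\dim(V^A+V^B)=\mathrm{rank}(\phi)$ over $\R$.

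The more serious issue is a missing case in the abelian analysis. You assume that when $\al$ is abelian but not central, $V^A=V^B$ is a common Cartan; this fails when exactly one of $A=\al(x)$, $B=\al(y)$ is central, which does occur. For instance, with $(p,q)=(2,3)$ and $\al$ factoring through $H_1(X)=\Z$ with $\al(\mu)=\diag(i,-i)$, one gets $\al(y)=-\Id$ central and $\al(x)=\diag(-i,i)$ non-central. Then $\dim V^A=2$, $\dim V^B=4$, $k=\dim(V^A+V^B)=4$, and $Z=\al(x^p)=-\Id$ gives $\dim V^Z=4$, so your formula yields $\dim H^1 = 2+4+4-8=2$; but $\al$ is abelian and $\al(x^p)$ is central, so \eqref{Eq6.3} as stated would give $4$. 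Your method, carried out carefully, therefore disagrees with the proposition in this edge case -- and I believe your answer is the correct one: a direct cocycle check confirms $\dim H^1 = 2$ here, and the paper's own proof invokes the analogue of \eqref{Eq6.5} for $y$, which silently presumes $\al(y)$ non-central and so also does not cover this case. You should either treat this subcase explicitly (and note the discrepancy with the stated formula), or record that it does not arise for the representations relevant to the later spectral flow computations.
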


\begin{proof}
The knot group $ \pi_1(X)$ of the $(p,q)$ torus knot $K \subset S^3$
admits the presentation $$ \pi_1(X) \cong \langle x,y \mid x^p = y^q \rangle.$$  
Since every $U(2)$ matrix is diagonalizable, any representation $\al \co \pi_1(X) \to U(2)$
can be conjugated so that $$\al(x) = s \left(\begin{matrix} a &
    0 \\
0 & \bar a
\end{matrix}
\right).$$  We will use the bar resolution to compute the cohomology. Let
$\left(\begin{matrix}u i & z \\ -\bar z & vi \end{matrix}\right) \in
u(2)$. Then $$s\mat{a}{0}{0}{\bar a} \mat{ui}{z}{-\bar z}{vi}
\left(s\mat{a}{0}{0}{\bar a}\right)^{-1} = \mat{ui}{0}{0}{vi} +
\mat{a^2}{0}{0}{\bar a^2}\mat{0}{z}{-\bar z}{0}$$ yields
\begin{equation}\label{Eq6.4}\delta^0\mat{ui}{z}{-\bar z}{vi}(x) =\left(\Id -
\mat{a^2}{0}{0}{\bar a^2}\right) \mat{0}{z}{-\bar z}{0}.\end{equation}

If $\al$ is central, then $\ker(\delta^0)=u(2)$. If $\al$ is abelian
and non-central, then $\al(y)$ is also diagonal, and  $$\ker(\delta^0) =\ker (\delta^0(\cdot)(x)) =
\ker(\delta^0(\cdot)(y))$$ is the  2-dimensional space of diagonal
$u(2)$ matrices. If $\al$ is not abelian, then $\al(y)$ is not
diagonal, and $\ker
(\delta^0(\cdot)(x))$ and $\ker (\delta^0(\cdot)(y))$ are not equal. Then $$\ker(\delta^0) = \ker(\delta^0(\cdot)(x)) \cap
\ker(\delta^0(\cdot)(y))$$ is 1-dimensional, because
$\mat{ui}{0}{0}{ui}$ commutes with conjugation. This shows
\eqref{Eq6.2}.

Let $\zeta$ be a $1$-cocycle. Then $\zeta(x) = X$   and
$\zeta(y) = Y$ for $X,Y \in u(2)$ satisfying the equation
$$
\sum_{i = 0}^{p-1} x^i \cdot X = \sum_{i=0}^{q-1} y^i \cdot Y.
$$
If $\al$ is central, the above equation simplifies to $p X  =
qY$ and the space of 1-cocycles is $4$-dimensional.
If $\al$ is non-central,  we compute
\begin{equation}\label{Eq6.5}
\begin{split}
\sum_{i = 0}^{p-1}  x^i \cdot X = \sum_{i = 0}^{p-1}\left(\begin{matrix} a^i &
    0 \\
0 & \bar a^i
\end{matrix}\right) \cdot X &= p\mat{ui}{0}{0}{vi} +
\sum_{i = 0}^{p-1}\mat{a^{2i}}{0}{0}{\bar
  a^{2i}}\mat{0}{z}{-\bar z}{0}\\
& =  p\mat{ui}{0}{0}{vi}+
\mat{\frac{a^{2p}-1}{a^2-1}}{0}{0}{\frac{\bar a^{2p}-1}{\bar a^2-1}}\mat{0}{z}{-\bar z}{0} .
\end{split}
\end{equation}
If $\al$ is abelian and non-central, note that $\al(x)^p =
\al(y)^q$ need not be central. A statement for $y$ analogous to
\eqref{Eq6.5} then shows that the
space of 1-cocycles is 4-dimensional if $\al(x)^p$ is non-central,
and is 6-dimensional if $\al(x)^p$ is central.
If $\al$ is irreducible, then $\al(x)^{2p} = \al(y)^{2q}=1 $.  Then, just like for the $0$-cocycles, $\ker(\delta^1)$ does not contain all diagonal matrices of $u(2)$, but only those with equal entries. Therefore, in view of \eqref{Eq6.5}, the space of 1-cocycles is 5-dimensional for $\al$ irreducible. Since  by \eqref{Eq6.4} the
space of $1$-coboundaries is 0-dimensional for $\al$ central, 2-dimensional for $\al$ abelian and non-central, and 3-dimensional otherwise, \eqref{Eq6.3} follows.
\end{proof}

\begin{prop}\label{Prop6.6}
Let $\al$ be an $U(2)$ representation of $\pi_1(T)$, where
$U(2)$ acts on
$u(2)$ via the adjoint representation. Then
\begin{align}
\label{Eq6.6}\dim H^0(T;u(2)_\al) &=
\begin{cases}
4 & \text{if } \al \text{ is central},\\
2 & \text{otherwise,}
\end{cases}\\
\label{Eq6.7}\dim H^1(T;u(2)_\al) &=
\begin{cases}
8 & \text{if } \al \text{ is central},\\
4 & \text{otherwise,}
\end{cases}
\end{align}
\end{prop}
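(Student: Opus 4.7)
The plan is to exploit the fact that $\pi_1(T) = \Z \oplus \Z$ is abelian, which allows us to conjugate $\al$ so that both $\al(\mu)$ and $\al(\la)$ lie simultaneously in the diagonal maximal torus $T_{U(2)}$. Once diagonalized, computing simultaneous invariants becomes a routine linear-algebra exercise. For $H^1$ I would not attack the cochain complex directly; instead I would compute $H^0$ and then invoke Euler characteristic together with Poincar\'e duality.

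For $H^0(T;u(2)_\al)$, I would decompose $u(2) = \R\, iI \,\oplus\, \R\, iH \,\oplus\, W$, where $H = \diag(1,-1)$ and $W$ is the two-real-dimensional off-diagonal complement of the diagonal Cartan subalgebra inside $su(2)$. Any element $\diag(a,b) \in T_{U(2)}$ acts trivially on $\R\,iI \oplus \R\, iH$ under the adjoint representation and acts on $W \cong \C$ by multiplication by $a/b$. If $\al$ is central, then $a=b$ for both generators, so all of $u(2)$ is fixed and $\dim H^0 = 4$. If $\al$ is non-central, then at least one generator has distinct diagonal entries $a \neq b$, and since $|a|=|b|=1$ this forces $a/b \neq 1$ on $W$, hence $W^\al = 0$; the invariants then reduce to the Cartan part of dimension 2. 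This establishes \eqref{Eq6.6}.

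For $H^1$, the idea is as follows. The trace form $\langle X,Y\rangle = -\tr(XY)$ is nondegenerate and invariant under the adjoint action of $U(2)$, so the local system $u(2)_\al$ is isomorphic to its dual as a $\pi_1(T)$-module. Since $T$ is a closed oriented surface, Poincar\'e duality with local coefficients yields
$$\dim H^2(T;u(2)_\al) \;=\; \dim H^0(T;u(2)_\al).$$
Combining this with the Euler--Poincar\'e formula $\dim H^0 - \dim H^1 + \dim H^2 = \chi(T) \cdot \dim_\R u(2) = 0$ gives
$$\dim H^1(T;u(2)_\al) \;=\; 2\,\dim H^0(T;u(2)_\al),$$
from which \eqref{Eq6.7} follows by plugging in the values already established in \eqref{Eq6.6}.

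The main obstacle is essentially bookkeeping rather than mathematical: one must check that in the non-central case the action on $W$ truly has no invariants, i.e.\ that no element of $\al(\pi_1(T))$ is central yet yields a fixed vector in $W$. This amounts to the observation that $\diag(a,b) \in T_{U(2)}$ with $a/b \neq 1$ acts on $W$ as multiplication by a nontrivial unit scalar, so the intersection of the fixed subspaces of $\al(\mu)$ and $\al(\la)$ on $W$ is automatically $0$ whenever at least one of the two is non-scalar.
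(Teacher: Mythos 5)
Your computation of $H^0$ is essentially the same as the paper's: conjugate to the diagonal torus, split $u(2)$ into diagonal and off-diagonal parts, and observe that the off-diagonal part has no invariants precisely when some generator is non-scalar. The paper simply refers back to the bar-resolution calculation in Proposition \ref{Prop6.1}, but the underlying linear algebra is identical.

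For $H^1$ your argument is genuinely different and, I would say, slicker. The paper computes $H^1$ directly: it writes down the single cocycle condition $\zeta(\la) - \mu\cdot\zeta(\la) = \zeta(\mu) - \la\cdot\zeta(\mu)$ coming from the relator $\mu\la\mu^{-1}\la^{-1}$ of $\pi_1(T)$, observes that in the non-central case a cocycle is determined modulo coboundaries by its diagonal components, and counts dimensions by hand. You instead invoke two structural facts: $\chi(T)=0$ forces $\dim H^0 - \dim H^1 + \dim H^2 = 0$, and the $\Ad$-invariant nondegenerate trace form on $u(2)$ together with Poincar\'e duality for the closed oriented surface $T$ gives $\dim H^2 = \dim H^0$, whence $\dim H^1 = 2\dim H^0$. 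This is correct (the trace form $-\tr(XY)$ is positive definite on skew-Hermitian matrices, so the local system $u(2)_\al$ is self-dual), and it has the advantage of explaining the factor of two uniformly and avoiding any explicit cocycle bookkeeping. What it does not give you, and what the paper's direct approach implicitly supplies, is a description of actual cocycle representatives, which is what Theorem \ref{Thm6.3} needs when identifying $\sL_A$; but for the dimension count asserted in this proposition your route is complete and arguably cleaner.
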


\begin{proof}
The computation of \eqref{Eq6.6} works just like the computation for
\eqref{Eq6.2}, keeping in mind that all representations are abelian and we may
assume that they are diagonal. For \eqref{Eq6.7} note that a 1-cocycle
$\zeta$ satisfies $\zeta(\la) - \mu \cdot \zeta(\la) =
\zeta(\mu) - \la \cdot \zeta(\mu)$. For $\al$ non-central
$\zeta$ is therefore  uniquely determined up to coboundary
(compare with \eqref{Eq6.4})
by its values in the diagonal
matrices.
\end{proof}

Together with the computations from Propositions \ref{Prop6.1} and \ref{Prop6.6} we can prove the following
result. In the following, we decompose $u(2) = U \oplus W$ into diagonal and off-diagonal matrices, and further decompose
$U = U' \oplus U''$, where  
$$U = \left\{\begin{pmatrix} i a & 0 \\ 0 & ib \end{pmatrix} \right\} , \; U' = \left\{\begin{pmatrix} i a & 0 \\ 0 & ia \end{pmatrix} \right\} \; \text{ and } \; U'' = \left\{\begin{pmatrix} i a & 0 \\ 0 &-ia \end{pmatrix} \right\}.$$
Define $Q_{\al,\be} = Q^{12}_{\al,\be} \subseteq \Om^0(T;W)$ to be the $u(2)$-analogue of the subspace described for $su(n)$
in equations  \eqref{Eq2.1} and \eqref{Eq2.3}, and recall the representation $\varphi_{\al,\be}$ of $\pi_1(T)$
given just after Definition \ref{Def2.1}.

\begin{thm}\label{Thm6.3}
Suppose $A$ is a $U(2)$ connection on $X$ with $\hol(A) = \rho$ and
$\rho|_T = \varphi_{\al,\be}$. 
Then
\begin{equation}\label{Eq6.8}
\sL_A =
\begin{cases}
U \oplus Q_{\al,\be} \oplus U\, dm \oplus Q_{\al,\be}\, dm& \text{if $\rho$ is central},\\
U \oplus U\, dm & \text{if $\rho$ is abelian, but not central},\\
U' \oplus U\, (dm-pq\, d\ell) \oplus U'' \, dm \wedge d\ell & \text{otherwise},
\end{cases}
\end{equation}
and for $W_A :=\ker(H^{1}(X,u(2)_{\rho})\to
H^{1}(\partial X,u(2)_{\rho}))$
\begin{equation}\label{Eq6.9}
\dim(W_A) = \begin{cases}
2 &  \text{ if $\rho$ is non-central and $\rho(x^p)$ is central},\\
0 & \text{otherwise}.
\end{cases}
\end{equation}
Note that the non-central abelian representations with $\rho(x^p)$ central
are twisted bifurcation points of the $SU(2)$ representation variety of the knot complement.
\end{thm}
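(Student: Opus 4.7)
The plan is to identify $\sL_A$ with the image of the restriction map $j^{*}\co \sH^{*}(X;u(2)_{\rho})\to\sH^{*}(T;u(2)_{\rho|_{T}})$ on harmonic forms, which is standard from Atiyah--Patodi--Singer theory and Hodge theory for flat $U(2)$ connections; the kernel $W_A = \ker j^{1}$ is defined directly. The proof then becomes a cohomological computation, organized around the $\rho$-invariant decomposition $u(2) = U \oplus W$ into the center (diagonals) and its orthogonal complement (off-diagonals). Since $U$ is central, the adjoint action of $\rho$ is trivial on $U$, while on $W$ it is trivial precisely when $\rho$ is central, so the cochain complex splits as $C^{*}(X;U) \oplus C^{*}(X;W_\rho)$ and likewise over $T$.

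For the central case, both summands have trivial adjoint action, so $H^{*}(X;u(2)_{\rho}) \cong u(2) \otimes H^{*}(X;\R)$ and similarly on $T$. The generators $1$ and $[dm]$ of $H^{*}(X;\R)$ restrict to their counterparts on $T$, and in the $W$-summand the constant off-diagonal matrices are identified with $Q_{\al,\be}$, which for $\al_i - \al_j \in \Z$ consists of constant matrices by \eqref{Eq2.2}. This yields the formula $\sL_A = U \oplus Q_{\al,\be} \oplus U\,dm \oplus Q_{\al,\be}\,dm$. The abelian non-central case is analogous: the $U$-part again contributes $U \oplus U\,dm$, while generically $\rho|_T$ acts nontrivially on $W$, so $\sH^{*}(T;W_{\rho|_T}) = 0$ by Proposition \ref{Prop2.2} and the $W$-contribution vanishes.

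The non-abelian case is the most delicate. Proposition \ref{Prop6.1} gives $\dim H^{0}(X;u(2)_\rho) = 1$ and $\dim H^{1}(X;u(2)_\rho) = 2$, and by Poincar\'e--Lefschetz duality for $(X,T)$ one obtains $\dim H^{2}(X;u(2)_\rho) = 1$. The $\rho$-invariants in $u(2)$ are exactly the scalars $U'$, so the $H^{0}$ image is $U'$. The Lagrangian property of $\im j^{*}$ under the trace-based symplectic form on $\sH^{*}(T;u(2)_\rho)$ then forces the $H^{2}$ image to be the trace-orthogonal complement $U''\,dm \wedge d\ell$. The $H^{1}$ image is a Lagrangian $2$-plane in $\sH^{1}(T;U) \cong U\,dm \oplus U\,d\ell$; identifying it with $U\,(dm - pq\,d\ell)$ requires an explicit cocycle calculation using the presentation \eqref{Eq4.1} together with the longitude relation $\la = x^{p}\mu^{-pq}$, tracking how the relations in $\pi_{1}(X)$ constrain the $1$-cocycle values on $\mu$ and $\la$ after the twisting that mixes the trivial $U$-action with the nontrivial $W$-action.

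Finally, $W_A = \ker j^{1}$ is computed by dimension counting. Proposition \ref{Prop6.1} records a jump from $\dim H^{1}(X;u(2)_\rho) = 2$ to $\dim H^{1}(X;u(2)_\rho) = 4$ precisely when $\rho$ is abelian non-central with $\rho(x^{p})$ central. In that case the extra $2$-dimensional piece comes from $H^{1}(X;W_\rho)$, and since at such generic representations $\sH^{1}(T;W_{\rho|_T}) = 0$ by Proposition \ref{Prop2.2} (the condition $(\al_1-\al_2,\be_1-\be_2)\in\Z^2$ fails at twisted bifurcation points), this extra piece lies in $\ker j^{1}$, giving $\dim W_A = 2$; in all other cases $\dim W_A = 0$ by matching the dimensions computed from Propositions \ref{Prop6.1} and \ref{Prop6.6}. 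The main obstacle is the explicit identification of the $H^{1}$ image in the non-abelian case, where pinning down the slope $-pq$ requires careful bookkeeping of the twisting structure and the longitude relation in the torus knot group.
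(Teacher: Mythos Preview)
Your approach is essentially the same as the paper's: both identify $\sL_A$ with cohomological restriction data, split along $u(2)=U\oplus W$, handle the three cases separately, and deduce \eqref{Eq6.9} by dimension counting from Propositions~\ref{Prop6.1} and~\ref{Prop6.6}. Your use of the Lagrangian property to pin down the $H^2$ part in the irreducible case (forcing $U''\,dm\wedge d\ell$ as the trace-orthogonal complement of $U'$) is a nice addition that the paper leaves implicit.

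Where you diverge is at the step you yourself flag as the main obstacle: the identification of the $H^1$ image with $U(dm-pq\,d\ell)$ in the irreducible case. You describe this as requiring ``tracking how the relations in $\pi_1(X)$ constrain the $1$-cocycle values on $\mu$ and $\la$ after the twisting that mixes the trivial $U$-action with the nontrivial $W$-action.'' This framing is off: the $U$ and $W$ summands do not mix under the adjoint action, and since $\rho|_T$ is non-central the $W$-part of $H^1(T)$ vanishes outright, so the image lands cleanly in $H^1(T;U)$. The paper's actual argument is much more direct: take a cocycle with $\zeta(\mu)=M$ diagonal, observe that $\rho(x^p)$ is central (Schur, since $x^p$ is central in $\pi_1(X)$ and $\rho$ is irreducible), and then the longitude relation $\la=x^p\mu^{-pq}$ together with the fact that adjoint action on diagonal matrices is trivial gives $\zeta(\la)=-pq\,M$. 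This is the substantive computation, and you should carry it out rather than defer it.

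A minor point: your reference to \eqref{Eq4.1} is the $(2,q)$ presentation from Section~\ref{Sec4}, whereas Theorem~\ref{Thm6.3} is stated for general $(p,q)$ torus knots with the presentation $\langle x,y\mid x^p=y^q\rangle$ used in the proof of Proposition~\ref{Prop6.1}.
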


\begin{proof}
First observe, that $\rho$ is central if and only if its pull-back to $\pi_1(T)$ is
central, because the meridian normally generates the fundamental group
of the knot complement. Let us compute the limiting values of extended $L^2$-solutions. Notice that $$\im(H^{1}(X,u(2)_{\rho})\to
H^{1}(\partial X,u(2)_{\rho}))$$ is the differential of the
restriction map $R(X, U(2)) \to R(T, U(2))$ for $\rho$
non-central.  For $\rho$ central or $\rho$ abelian with $\rho(x^p)$
non-central the computations are simple, and the result is obvious. If
$\rho$ is non-central and abelian with $\rho(x^p)$ central, we make
use of the fact that $\im(H^{1}(X,u(2)_{\rho})\to
H^{1}(\partial X,u(2)_{\rho}))$ is 2-dimensional and that it
contains $U\, dm$. Let $\rho$ be irreducible. We know that $\rho (\mu)
= \varphi_{\al,\be}(\mu)$ is diagonal. Then $\zeta(\mu) = M$ is
diagonal and $\rho(x^p)$ is central. Therefore, $\zeta(\la) = -pq
M$. Again, we make use of the fact that
$\im(H^{1}(X,u(2)_{\rho})\to
H^{1}(\partial X,u(2)_{\rho}))$ is 2-dimensional. Then we employ the de Rham theorem to prove \eqref{Eq6.8}.

Equation \eqref{Eq6.9} follows directly from Propositions \ref{Prop6.1} and
\ref{Prop6.6}.
\end{proof}

\section{The $SU(3)$ Casson invariant of spliced sums} \label{Sec7}
Suppose $K_1$ and $K_2$ are $(2,q_1)$ and
$(2,q_2)$ torus knots with complements $X_1$ and $X_2$ in $S^3$, respectively,
and let $M =  X_1\cup_T X_2$ denote their spliced sum.
We shall relate the $SU(3)$
Casson invariant of $M$ to the $SU(2)$ Casson invariants of $+1$
surgeries on $K_1$ and $K_2$, which are equal to the Casson
knot invariants $\la'_{SU(2)}(K_1)$ and $\la'_{SU(2)}(K_2)$, using the approach of Taubes \cite{T}
to make the connection.
This involves comparing various spectral flows, and in applying the results from the previous sections to
$X_2$, we have to be careful with our parametrizations of the
boundary: The parameters $\ell_1$ and $m_1$ of $\partial X_1$ are
identified with $m_2$ and $\ell_2$. Let $X_2$ be with a metric and oriented as in Section
\ref{Sec6}. We orient $X_1$ such that $\partial X_1 = -T$ and place
a metric on $X_1$, such that a collar of $X_1$ is isometric to $[-1,0]
\times T$. It will be convenient to use the notation $\cP^1 =\cP^+$
and $\cP^2 = \cP^-$.

Let $B(t)$ be a path of $SU(3)$ connections on $M$ with $B(0) = \Th$ and $B(1)$ irreducible, such that
$B(1)$ is reducible on either knot complement. By Lemma
\ref{Lem3.1} and Theorem \ref{Thm2.10}, it
suffices
 to consider the spectral flow along a path of $S(U(2)\times
U(1))$ and $S(U(1)\times
U(2))$ connections on $X_1$ and $X_2$. Whenever convenient, identify $S(U(2)\times
U(1))$ (and similarly $S(U(1)\times U(2))$) with $U(2)$ as in
\eqref{Eq6.1} with the induced action on $su(3) = u(2) \oplus
\C^2$ as before. We can assume that each path is the composition of a path of $SU(2)$
connections with a path of twists of a fixed
$SU(2)$ connection. The following definition makes this more precise.
\begin{defn}\label{Def7.1} Arrange
paths $\tilde A_1(t)$ and $\tilde A_2(t)$ of $SU(2)$ connections, $t \in
[0,\frac{1}{2}]$, as well as paths $A_1(t)$
and $A_2(t)$ of $SU(3)$ connections, $t\in [0,1]$, on the knot
complement $X_1$ and $X_2$ respectively, satisfying
\begin{enumerate}
\item $A_1(0) = \Th$, $A_2(0)=\Th$, $ A_1(1) = B(1)|_{X_1}$, $A_2(1) = B(1)|_{X_2}$,
\item $\tilde A_1(t)$ and $\tilde A_2(t)$ are paths of flat $SU(2)$ connections, and we denote by $A_1(t)$ and $A_2(t)$
the corresponding paths of $SU(2)\times \{1\}$ and $\{1 \} \times SU(2)$ connections, and
\item $\rho_1(t):=\hol(A_1(t))$ is a $\odot_1$-twist of $\hol(\tilde A_1(\frac{1}{2}))$ for $t\in
  [\frac{1}{2},1]$, and $\rho_2(t):=\hol(A_2(t))$ is a $\odot_2$-twist of $\hol(\tilde A_2(\frac{1}{2}))$ for $t\in
  [\frac{1}{2},1]$.
\item $\tilde\varrho_1$ and
$\tilde\varrho_2$ are paths in $\widetilde{\La^2}$ with $A_i(t)|_T
= a_{\varrho_i(t)}$ as in Definition \ref{Def2.1}, $\tilde\varrho_1(0) = \tilde\varrho_2(0)$ and $\tilde\varrho_1(1)
= \tilde\varrho_2(1)$, where $\pi \circ \tilde\varrho_i = \varrho_i$ and $\pi \co
\widetilde{\La^2} \to \La^2\cong \R^4$ the projection.
\end{enumerate}
\end{defn}

Figure \ref{Fig3} describes the situation in the case of a
spliced sum of two trefoil complements. It shows their $SU(2)$
representation varieties immersed in the $SU(2)$ pillow case and the
holonomy of $\tilde A_i(t)$, which is the untwisted part of the paths
$A_i(t)$. The grey  line is on the back of the pillowcase and the
black line is on the front of the pillowcase. Let $\be_{1,j}\co\pi_1(X_1) \to SU(2)$
and $\be_{2,j}\co\pi_1(X_2) \to SU(2)$ be representations for $j=1,\ldots, 4$ such that
 $$(\chi_{\th_{1,j}} \odot_1 \be_{1,j}) \cup (\chi_{\th_{2,j}}
\odot_2 \be_{2,j})$$ 
are
$SU(3)$ representations of $\pi_1(M)$. As in the proof of Theorem
\ref{Thm5.2}, we find four of the isolated $SU(3)$ representations of $\pi_1(M)$, and the 
others (there are 16 total) are obtained by applying the discrete gluing parameters.

\begin{figure}[th]
\leavevmode
\psfrag{p11}{$\be_{1,1}$}
\psfrag{p12}{$\be_{1,2}$}
\psfrag{p13}{$\be_{1,3}$}
\psfrag{p14}{$\be_{1,4}$}
\psfrag{p21}{$\be_{2,1}$}
\psfrag{p22}{$\be_{2,2}$}
\psfrag{p23}{$\be_{2,3}$}
\psfrag{p24}{$\be_{2,4}$}
\psfrag{theta1}{$\th_1$}
\psfrag{theta2}{$\th_2$}
\psfrag{A1}{$\tilde A_1$}
\psfrag{A2}{$\tilde A_2$}
\includegraphics[scale=.7]{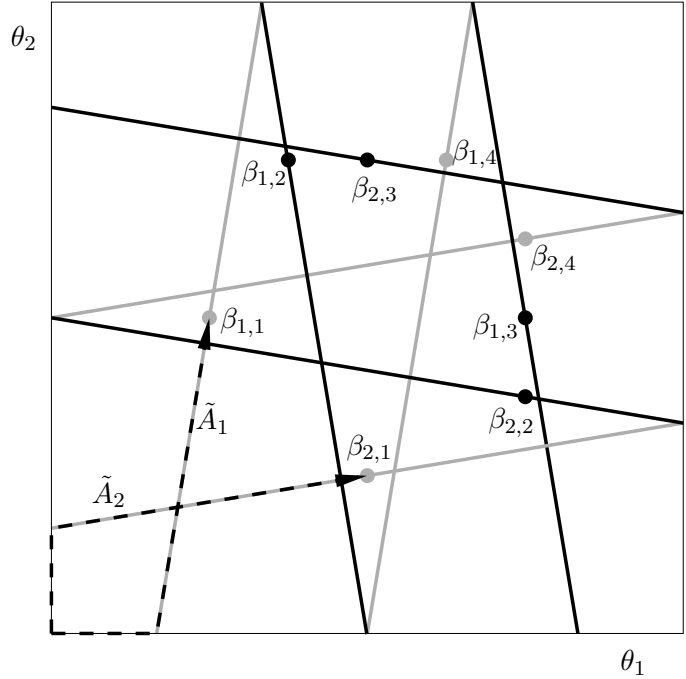}
\caption{The $SU(2)$ representation varieties of two trefoils} \label{Fig3}
\end{figure}

By Theorem \ref{Thm2.10} we have
\begin{align*}
\SF(B(t)) = & \; \SF(A_1(t);\APS^+_{\tilde\varrho_1(t)}) +
\SF(A_2(t);\APS^-_{\tilde\varrho_2(t)})+\SF(\tilde\varrho_1(1-t)*\tilde\varrho_2(t))\\
&{}+\tau_\mu(J\sL_{X_1,0}, K^+_{\tilde\varrho_1(0)} \oplus \hat\sL^+, \sL_{X_2,0}) -
\tau_\mu(J\sL_{X_1,1}, K^+_{\tilde\varrho_1(0)} \oplus \hat \sL^+, \sL_{X_2,1}).
\end{align*}
In order to compute the above summands, we can break up the $su(3)$
spectral flow into $u(2)$ and $\C^2$ spectral flow. Note that the
boundary conditions also respect the decomposition of $su(3)$. In particular, we will see in this section that the $\C^2$
spectral flow is even, and that the $u(2)$ spectral flow vanishes
for $t\in [\frac{1}{2},1]$ and equals the $su(2)$ spectral flow along
$\tilde A_1(t)$ or $\tilde A_2(t)$ for $t\in[0,\frac{1}{2}]$. Let us
start with the easier case.

\begin{prop}\label{Prop7.2}
Let $A(t)$ be a path of $U(2)$ connections on $X_i$ with  $A(t)|_T =
a_{\varrho(t)}$, $\varrho =
\pi\circ\tilde\varrho$ and $\hol(A(t))$ acting on $\C^2$ via multiplication. Then
$\SF_{\C^2}(A(t);\sP^i_{\varrho(t)})$ is even.
\end{prop}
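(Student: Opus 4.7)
The plan is to exploit the natural complex structure on $\C^2$ to force the $\C^2$ spectral flow to be even. First I would note that the defining representation of $U(2)$ on $\C^2$ commutes with multiplication by $i$, so the twisted coefficient bundle $\C^2_{\hol A}$ over $X_i$ is a \emph{complex} vector bundle and the covariant derivative $d_A$ is $\C$-linear. Combined with the $\C$-linearity of the Hodge star on forms, this makes both the restriction of the odd signature operator $D_{A(t)}|_{X_i}$ to $\C^2$-valued forms and the tangential operator $S_{a_{\varrho(t)}}$ on $T$ into $\C$-linear operators on the respective complex $L^2$ spaces.

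Next I would verify that each summand of the boundary condition $\sP^i_{\tilde\varrho(t)} = P^i_{\tilde\varrho(t)} \oplus \hat\sL^\pm \oplus K^\pm_{\tilde\varrho(t)}$, taken in the $\C^2$-coefficient setting, is a complex subspace of $L^2(\Om^{0+1+2}(T;\C^2))$. The spectral projections $P^\pm$ are defined purely in terms of $S_a$ and are therefore complex. The fixed piece $\hat\sL^\pm$, being the obvious $\C^2$-analog of constants and $d\ell$-forms (and their $J$-images), is manifestly complex. The finite-dimensional correction $K^\pm_{\tilde\varrho}$ is the $\C$-linear span of the explicit sections $\psi_1^\pm, \psi_2^\pm$ built from the function $\phi$ of \eqref{Eq2.2}; inspecting those formulas, the coefficients are genuinely complex, so the span is $\C$-invariant.

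Once these checks are in place, $(D_{A(t)}|_{X_i}, \sP^i_{\tilde\varrho(t)})$ is a $\C$-linear path of self-adjoint Fredholm operators on a complex Hilbert space. Every real eigenspace is then a complex subspace of even real dimension, so each eigenvalue crossing through zero contributes an even integer to the spectral flow; equivalently, the real spectral flow equals twice the complex spectral flow. This gives $\SF_{\C^2}(A(t);\sP^i_{\varrho(t)}) \in 2\Z$.

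The main obstacle I anticipate is precisely the compatibility of $\hat\sL^\pm$ and $K^\pm_{\tilde\varrho}$ with the complex structure, since the constructions in Section \ref{Sec2} are formulated for $su(n)$-coefficients (which carry no natural complex structure in general) and must be transported to the $\C^2$-summand via the splitting $su(3) = u(2) \oplus \C^2$. In every case, however, the building blocks are expressible over $\C$, so transporting them to $\C^2$-coefficients produces $\C$-linear spaces and the argument goes through.
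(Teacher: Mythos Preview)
Your proposal is correct and follows essentially the same approach as the paper: both argue that $D_{A(t)}$, $S_{a_{\varrho(t)}}$, and the boundary condition $\sP^i_{\varrho(t)}\cap L^2(\Om^{0+1+2}(T;\C^2))$ are $\C$-linear, so the eigenspaces are complex and the real spectral flow is even. The paper's proof is a terse three-line version of your argument; your more careful check that each summand of $\sP^i$ is complex on the $\C^2$-side is appropriate (and note that $\hat\sL^\pm$ actually lives entirely in the diagonal $U_n$-part, so its $\C^2$-component is simply zero, making that check trivial).
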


\begin{proof}
Since $D_{A(t)}$ and $S_{a_{\varrho(t)}}$ are $\C$-linear,
$\sP^i_{\varrho(t)} \cap L^2(\Om^{0+1+2}(T;\C^2))$ is a vector
space over $\C$ and
the eigenspaces of $D_{A(t)}$ with boundary conditions $\sP^i_{\varrho(t)} \cap
L^2(\Om^{0+1+2}(T;\C^2))$ are complex subspaces. Therefore, the
eigenvectors come in pairs and the (real) spectral flow is even as claimed.
\end{proof}

We will
need the following lemma for various computations.

\begin{lem}\label{Lem7.3}
Let $A(t)$ be any path of irreducible $U(2)$ connections on $X_i$ with $A(t)|_T = a_{\varrho(t)}$ and $\varrho =
\pi\circ\tilde\varrho$. Then
$$\SF_{u(2)}(A(t);\sP^i_{\tilde\varrho(t)})  = 0$$
\end{lem}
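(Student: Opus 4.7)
The plan is to decompose $u(2)=U\oplus W$ into the diagonal and off-diagonal parts under the maximal torus of $U(2)$. Because $A(t)|_T=a_{\varrho(t)}$ is diagonal and flat, both $D_{A(t)}$ and the boundary condition $\sP^i_{\tilde\varrho(t)}$ respect this decomposition, so
$$\SF_{u(2)}(A(t);\sP^i_{\tilde\varrho(t)})=\SF_U(A(t);\sP^i_{\tilde\varrho(t)}|_U)+\SF_W(A(t);\sP^i_{\tilde\varrho(t)}|_W),$$
and it is enough to show each summand vanishes.

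For the $U$-part, the adjoint action of the maximal torus on $U$ is trivial, so $D_{A(t)}|_U$ coincides with the untwisted odd signature operator on $X_i$ and is independent of $t$. For the same reason $S_{a_{\varrho(t)}}|_U$ is $t$-independent, so $P^\pm|_U$ is constant, and inspection of Definition \ref{Def2.4} shows that $K^\pm_{\tilde\varrho}|_U=0$ because $K^\pm$ is supported on the off-diagonal summand. Hence the boundary condition restricted to $U$ reduces to the constant Lagrangian $P^\pm|_U\oplus \hat\sL^\pm$, and the spectral flow of a constant family is zero.

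For the $W$-part, the identification $W\cong\C$ as a representation of the maximal torus makes $D_{A(t)}|_W$ $\C$-linear with $\C$-linear boundary condition $P^\pm_{\tilde\varrho(t)}|_W\oplus K^\pm_{\tilde\varrho(t)}|_W$. Just as in Proposition \ref{Prop7.2}, this already forces the real spectral flow on $W$ to be even. To upgrade this to zero, I would combine the irreducibility of each $A(t)$ with Theorem \ref{Thm6.3}, which places $\sL_{A(t)}$ entirely inside the $U$-summand (so $\sL_{A(t)}|_W=0$), and with Proposition \ref{Prop6.1}, which shows that $\dim H^*(X_i;u(2)_{\hol(A(t))})$ is constant along the path. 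Together these rule out any eigenvalue of $D_{A(t)}|_W$ crossing zero under the prescribed boundary conditions, so $\SF_W=0$.

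The main obstacle will be justifying that the kernel of $D_{A(t)}|_W$ with the prescribed boundary condition really is trivial for every $t$, and not only at the flat connections where Theorem \ref{Thm6.3} applies most directly. I expect to address this by homotopy invariance of spectral flow within the connected space of irreducible $U(2)$ connections having the fixed diagonal flat boundary behaviour $a_{\varrho(t)}$, reducing to flat endpoints where the kernel can be read off from the cohomological description, and then closing the argument with the $\C$-parity observation to rule out any stray odd-dimensional contribution.
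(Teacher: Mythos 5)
Your plan rests on the claim that $D_{A(t)}$ respects the decomposition $u(2)=U\oplus W$ into diagonal and off-diagonal matrices, and this is false. Only the restriction $A(t)|_T=a_{\varrho(t)}$ to the boundary torus is diagonal; an \emph{irreducible} $U(2)$ connection cannot have a global reduction to the diagonal subgroup, so in the interior of $X_i$ the connection form has off-diagonal components, and $\ad$ of an off-diagonal element mixes $U''$ (the traceless diagonal summand) with $W$. Only the center $U'\cong\R\, iI$ is preserved unconditionally, and that is not enough. Consequently $D_{A(t)}$ does not split as $D_{A(t)}|_U\oplus D_{A(t)}|_W$, the spectral flow with boundary conditions does not decompose, and both your constancy argument on $U$ and your $\C$-parity argument on $W$ break down at the first step. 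The parity trick of Proposition \ref{Prop7.2} applies to the $\C^2$-summand of $su(3)=u(2)\oplus\C^2$ precisely because \emph{that} summand is invariant under the adjoint action of the entire group $S(U(2)\times U(1))$, not merely of the boundary maximal torus; this does not transfer to the off-diagonal part $W$ of $u(2)$.

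The paper's proof avoids any interior decomposition and instead computes the kernel of the boundary value problem directly through the adiabatic Cauchy data space $\La^\infty_A$. Combining Theorem \ref{Thm6.3}, which gives $\sL_A=U'\oplus U(dm-pq\,d\ell)\oplus U''\,dm\wedge d\ell$ when $\rho$ is irreducible, with the choice of $\hat\sL^\pm$ in Definition \ref{Def2.4}, one finds
$\La^\infty_A\cap\sP^+_{\al,\be}=\sL_A\cap\hat\sL^+=U''\,dm\wedge d\ell,$
so $\dim\ker(D_A;\sP^+_{\al,\be})=1$ is constant along the path, forcing $\SF_{u(2)}=0$. Note in particular that the kernel is not trivial, as your plan would aim to show for the $W$-part; it is constantly one-dimensional, which is an equally good reason for the spectral flow to vanish, and it is produced by a mechanism your decomposition would have missed.
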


\begin{proof}
Consider the case $i=1$. The computation of the limiting values of extended $L^2$-solutions  in
Theorem \ref{Thm6.3} and the definition of $\hat\sL$ in
Definition \ref{Def2.4} show that for $\hol(A)=a_{\al,\be}$ and $u(2)$
coefficients,
$$\La^\infty_{A} \cap \sP^+_{\al,\be}=  \sL_{A} \cap
\hat\sL^+ = U''\, dm \wedge d\ell,$$ and hence
$$\dim \ker(D_A;\sP^+_{\al,\be})  =
\dim(\La^\infty_{A} \cap \sP^+_{\al,\be})=
1.$$ Therefore, there is no $u(2)$ spectral flow along a path of
irreducibles. A similar computation for $i=2$ completes the proof.
\end{proof}

The following proposition explains the appearance of the $SU(2)$ Casson invariant.

\begin{prop}\label{Prop7.4}
For the path $A_i(t)$ given in Definition \ref{Def7.1}, we have
\begin{align}
\label{Eq7.1}\SF_{u(2)}(A_i(t);\sP^i_{\varrho_i(t)}) & =
\SF_{su(2)}(A_i(t);\sP^i_{\varrho_i(t)}), \quad t\in[0,\tfrac{1}{2}],\\
\label{Eq7.2}\SF_{u(2)}(A_i(t);\sP^i_{\varrho_i(t)}) & = 0, \quad t\in[\tfrac{1}{2},1].
\end{align}
\end{prop}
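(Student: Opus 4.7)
The plan is to exploit the $U(2)$-equivariant decomposition
\[
u(2) = U' \oplus su(2),
\]
where $U' = \R i \cdot I$ is the center (on which the adjoint action of $U(2)$ is trivial) and $su(2)$ is the traceless summand. This splits the twisted odd signature operator, because $[A,\cdot]$ vanishes on $U'$ for any $u(2)$-valued $A$, and it splits the family of boundary conditions of Definition \ref{Def2.4}, because $S_{a_{\al,\be}}$ preserves the diagonal/off-diagonal decomposition, the $K^\pm$-spaces live entirely in $W$, and $\hat\sL^\pm = U \oplus U\, d\ell$ splits along the refinement $U = U' \oplus U''$ of Section \ref{Sec6}. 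Consequently the spectral flow decomposes as
\[
\SF_{u(2)}(A_i(t); \sP^i_{\tilde\varrho_i(t)}) = \SF_{U'}(A_i(t); \sP^i_{\tilde\varrho_i(t)}) + \SF_{su(2)}(A_i(t); \sP^i_{\tilde\varrho_i(t)}).
\]

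For \eqref{Eq7.1}, the key observation is that on $[0,\tfrac{1}{2}]$ the path $A_i(t)$ takes values in $SU(2) \subset S(U(2)\times U(1)) \cong U(2)$, so its $u(2)$-form lies entirely in $su(2)$ and its central part vanishes. Hence $D_{A_i(t)}|_{U'}$ coincides with the untwisted odd signature operator and is independent of $t$. Moreover, the boundary connection $a_{\varrho_i(t)}$ has trivial central component (so that $S_{a_{\varrho_i(t)}}|_{U'} = S_0|_{U'}$), and the boundary condition $\sP^\pm_{\tilde\varrho_i(t)}|_{U'}$---given by $P^\pm_{0}|_{U'} \oplus \hat\sL^\pm|_{U'}$ together with the vanishing $K^\pm|_{U'}=0$---is also $t$-independent. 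A constant self-adjoint Fredholm operator has zero spectral flow, so $\SF_{U'} = 0$ along this segment, and \eqref{Eq7.1} follows.

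For \eqref{Eq7.2}, I would use that on $[\tfrac{1}{2},1]$ the connection $A_i(t)$ is the $\odot_i$-twist of the fixed flat irreducible $SU(2)$ connection $\tilde A_i(\tfrac{1}{2})$ by the family of characters $\chi_{\theta_i(t)}$. Tensoring a rank-two representation by a one-dimensional character preserves irreducibility, so $A_i(t)$ is a path of irreducible $U(2)$ connections on $X_i$, and Lemma \ref{Lem7.3} then applies verbatim to give $\SF_{u(2)}(A_i(t); \sP^i_{\tilde\varrho_i(t)}) = 0$.

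The hardest part will be the bookkeeping that justifies the initial decomposition of the spectral flow, i.e., verifying that the full family of boundary conditions $\sP^\pm_{\tilde\varrho}$ really does restrict cleanly to the $U'$-summand, independent of $\tilde\varrho$, when the central part of $\varrho$ vanishes. This reduces to the three compatibility facts noted above (the action of $S_{a_{\al,\be}}$ on the diagonal summand is untwisted, the $K^\pm$-spaces of Theorem \ref{Thm2.3} are supported off-diagonally, and $\hat\sL^\pm$ respects the refinement $U = U' \oplus U''$), after which both \eqref{Eq7.1} and \eqref{Eq7.2} drop out of Lemma \ref{Lem7.3} and the constancy argument above.
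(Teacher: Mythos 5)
Your proof is correct, and for \eqref{Eq7.2} it is identical to the paper's: twisting a fixed irreducible $SU(2)$ connection by characters stays irreducible (Remark \ref{Rem5.1}), so Lemma \ref{Lem7.3} applies. For \eqref{Eq7.1} your route is the same in substance but phrased a bit more structurally: you observe that $D_A$ and the boundary conditions respect the $U(2)$-invariant splitting $u(2)=U'\oplus su(2)$ (indeed $[A,\cdot]$ kills the center for \emph{any} $u(2)$-valued $A$, so the hypothesis that $A_i(t)$ is $su(2)$-valued on $[0,\tfrac12]$ is not even needed), and that the $U'$-block of the operator and its boundary condition are $t$-independent, giving $\SF_{U'}=0$; the paper instead cites the kernel formulas from Theorem \ref{Thm6.3} and the fact that $su(2)$-eigenfunctions are $u(2)$-eigenfunctions, which is the kernel-level shadow of your decomposition. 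Both arguments are valid, and yours makes the mechanism slightly more transparent. One tiny point worth noting: the constancy in $t$ of $P^\pm$ and $\hat\sL^\pm$ on the $U$-summand comes from the observation (already recorded after Proposition \ref{Prop2.2}) that the eigendecomposition of $L^2(\Om^{0+1+2}(T;U_n))$ is independent of $(\al,\be)$, so you do not really need the central part of $a_{\varrho_i(t)}$ to vanish for that step.
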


\begin{proof}
By Theorem \ref{Thm6.3} we get for $\hol(A)=a_{\al,\be}$
\begin{align*}\ker_{u(2)}(D_A;\sP^+_{\al,\be})= &\; U' \oplus
\ker_{su(2)}(D_A;\sP^+_{\al,\be}), \text{ and}\\
\ker_{u(2)}(D_A;\sP^-_{\al,\be})= &\; \ker_{su(2)}(D_A;\sP^-_{\al,\be})
\oplus U''\, dm \wedge d\ell.
\end{align*}
Since $su(2)$ eigenfunctions
are particularly $u(2)$ eigenfunctions, we get \eqref{Eq7.1}. Lemma
\ref{Lem7.3} and Remark \ref{Rem5.1} yield \eqref{Eq7.2}.
\end{proof}

Let $X_1^+$ and $X_2^+$ be $+1$ surgery on the
corresponding knots. Let $S_i = X_i^+\backslash X_i$, which is a
solid torus, whose $SU(2)$ representation variety maps into the pillow
case as the diagonal. A simple computation analogous to Theorem
\ref{Thm6.3} gives the limiting values of extended
$L^2$-solutions $\sL_{S_i}$ with $su(n)$ coefficients for $S_i$
keeping in mind the parametrization induced by surgery.
\begin{lem} Let $A$ be a $SU(n)$ connection on $S_i$ with $\hol(A) = \rho$ and
$\rho|_T = \varphi_{\al,\be}$. Decompose $su(n)= U_n \oplus W_n$ into diagonal and off-diagonal
matrices as before and let $Q_{\al,\be}$
be  as defined in equation \eqref{Eq2.3}.
Then
$$\sL_{S_i,\al,\be} = \begin{cases}
U_n \oplus Q_{\al,\be} \oplus U_n(d m+d\ell) \oplus Q_{\al,\be}(dm+d\ell)& \text{if $\rho$ is central},\\
U_n \oplus U_n\, (dm +d\ell) & \text{otherwise}.
\end{cases}$$
\end{lem}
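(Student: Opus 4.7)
My plan is to mimic the argument of Theorem \ref{Thm6.3} but for the solid torus $S_i$, where the cohomology is substantially simpler. The strategy is to identify $\sL_{S_i,\al,\be}$, via the de Rham theorem, with the image of the restriction map on twisted cohomology
\[
j^*\co H^*(S_i; su(n)_\rho) \longrightarrow H^*(T; su(n)_\rho),
\]
and then to translate the answer into the explicit harmonic bases supplied by Proposition \ref{Prop2.2}.

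First I would exploit the fact that $\rho|_T = \varphi_{\al,\be}$ is diagonal, so the adjoint action preserves the splitting $su(n) = U_n \oplus \bigoplus_{i<j} C^{ij}$, reducing everything to a summand-by-summand calculation. Because $S_i$ is homotopy equivalent to $S^1$ with $\pi_1(S_i)$ generated by the core, each summand $V$ satisfies $H^0(S_i; V_\rho) = V^\rho$ and $H^1(S_i; V_\rho) \cong V^\rho$ (the latter by semisimplicity of the $\rho$-action), while $H^2(S_i; V_\rho) = 0$; this last vanishing is precisely why no $dm \wedge d\ell$ term appears in either case of the lemma. For $V = U_n$ the action is trivial and both cohomology groups equal $U_n$. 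For $V = C^{ij}$ both equal $C^{ij}$ when $\rho$ is central, and vanish otherwise, since a noncentral diagonal $\rho$ on $\pi_1(S_i) \cong \Z$ forces some $(\al_i-\al_j, \be_i-\be_j)$ to leave $\Z^2$.

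Next I would describe $j^*$ using harmonic representatives. In degree zero, $j^*$ is the identity inclusion of $V^\rho$ into the corresponding summand of $\sH^0_{\al,\be}(T; su(n))$ listed in Proposition \ref{Prop2.2}, which produces the $U_n$ piece (always present) and the $Q_{\al,\be}$ piece (present when $\rho$ is central) of $\sL_{S_i,\al,\be}$. In degree one, I would write a harmonic representative on $T$ in the form $f\,dm + g\,d\ell$ with $f,g \in V^\rho$; such a form extends to a closed form on $S_i$ exactly when its pairing with the meridian class of $S_i$ vanishes. Since the meridian of $S_i$ bounds a disk in $S_i$ and, under the paper's $+1$-surgery framing together with the orientation conventions $\partial X_i = -T$ and $\partial S_i = T$, is homologous to the combination of $[\mu]$ and $[\la]$ for which the extension condition reads $f = g$, the degree-one part of the image is spanned by $dm + d\ell$ with coefficients in $V^\rho$. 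This yields the $U_n(dm+d\ell)$ and (in the central case) $Q_{\al,\be}(dm+d\ell)$ summands.

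Assembling the degree-zero and degree-one contributions across all summands produces exactly the formula for $\sL_{S_i,\al,\be}$. The single place that requires genuine care is fixing the sign that distinguishes $dm + d\ell$ from $dm - d\ell$; this is a pure convention check, depending only on the surgery framing together with the orientation conventions mentioned above, rather than a substantive calculation, and once it has been pinned down the remainder of the argument is routine.
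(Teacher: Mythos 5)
Your overall approach -- identify $\sL_{S_i,\al,\be}$ via the de Rham theorem with the image of the restriction $j^*\co H^*(S_i;su(n)_\rho)\to H^*(T;su(n)_\rho)$, split by $su(n)=U_n\oplus\bigoplus C^{ij}$, compute $H^*(S_i;-)$ from $\pi_1(S_i)\cong\Z$, and read off the degree-one image as the annihilator of the surgery meridian -- is exactly the "computation analogous to Theorem~\ref{Thm6.3}" that the paper gestures at, and the degree-zero/degree-one bookkeeping and the $H^2(S_i)=0$ observation are all sound. Deferring the sign in $dm\pm d\ell$ to the surgery framing and orientation conventions is acceptable, since that is indeed a bare convention check.

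There is, however, a genuine gap in your treatment of the ``otherwise'' case. You claim that for $V=C^{ij}$ the cohomology groups ``vanish otherwise, since a noncentral diagonal $\rho$ on $\pi_1(S_i)\cong\Z$ forces \emph{some} $(\al_i-\al_j,\be_i-\be_j)$ to leave $\Z^2$.'' But you need vanishing for \emph{every} pair $(i,j)$, and noncentrality of $\rho(c)$ only forces two eigenvalues to differ; it does not forbid a repeated eigenvalue. For $n\geq 3$ one can have, e.g., $\rho(c)=\diag(e^{i\th},e^{i\th},e^{-2i\th})$ with $e^{3i\th}\neq 1$: this is noncentral, yet $\rho(c)$ acts trivially on $C^{12}$, so $H^{0}(S_i;C^{12}_\rho)$ and $H^1(S_i;C^{12}_\rho)$ are both nonzero and $(\al_1-\al_2,\be_1-\be_2)\in\Z^2$, producing a $Q^{12}_{\al,\be}$ contribution that your formula (and the lemma's) omits. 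The correct dichotomy for each summand $C^{ij}$ is whether $\rho(c)_{ii}=\rho(c)_{jj}$, not whether $\rho$ is central. For $n=2$ these two conditions coincide, so your argument is fine in that case, and $n=2$ is the only case invoked in Section~\ref{Sec7}; but as written your proof (like the lemma's statement) does not establish the claim for general $n$. You should either restrict explicitly to $n=2$ or refine the case analysis to account for partial eigenvalue coincidences.
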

By Lemma \ref{Lem7.3} we can elongate $\tilde A_i(t)$, $t\in
[0,\frac{1}{2}]$, by a path of irreducible $SU(2)$ connections
to a path $\tilde A_i(t)$ of flat connections on $X_i$ such
that $\tilde A_i(1)$ can be extended flatly to  $\tilde A'_i(t)$ on
$X_i^+$. We assume that $a_{\si_i(t)} := A_i(t)|_T$, $\pi \circ
\tilde\si_i(t) = \si_i(t)$ for some path $\tilde\si_i$ 
which agrees with $\tilde\varrho_i$ for $t\in
[0,\frac{1}{2}]$. Working modulo 2, we apply Theorem \ref{Thm2.10},
Lemma \ref{Lem7.3}, Proposition
\ref{Prop7.2}, Proposition \ref{Prop7.4},  and Proposition \ref{Prop2.9} to see that
\begin{align*}
\SF&_{su(3)}(B(t))  \equiv \SF_{su(2)}(\tilde A_1(t);\APS^+_{\tilde\si_1(t)}) +
\SF_{su(2)}(\tilde A_2(t);\APS^-_{\tilde\si_2(t)})\\
&{}+\tau_\mu(J\sL_{X_1,\varrho_1(0)}, K^+_{\tilde\varrho_1(0)} \oplus \hat\sL^+, \sL_{X_2,\varrho_1(0)}) -
\tau_\mu(J\sL_{X_1,\varrho_1(1)}, K^+_{\tilde\varrho_1(1)} \oplus
\hat \sL^+, \sL_{X_2,\varrho_1(1)}) \\
&{}-\tau_\mu(J\sL_{X_1,\si_1(0)}, K^+_{\tilde\si_1(0)} \oplus \hat\sL^+, \sL_{S_2,\si_1(0)}) +
\tau_\mu(J\sL_{X_1,\si_1(1)}, K^+_{\tilde\si_1(1)} \oplus \hat \sL^+, \sL_{S_2,\si_1(1)})\\
&{}-\tau_\mu(J\sL_{S_1,\si_2(0)}, K^+_{\tilde\si_2(0)} \oplus \hat\sL^+, \sL_{X_2,\si_2(0)}) +
\tau_\mu(J\sL_{S_1,\si_2(1)}, K^+_{\tilde\si_2(1)}
\oplus \hat \sL^+, \sL_{X_2,\si_2(1)}).
\end{align*}

Note that the Maslov triple indices in the last two lines are with
respect to $su(2)$ coefficients, while the first two Maslov triple
indices are with respect to $su(3)$
coefficients. It remains to show that these Maslov triple indices add
up to an even number.

Recall, that in general $S_a$ and $D_A$ preserve the decomposition
 $su(n)=U_n\oplus W_n$ into diagaonal and off-diagonal parts
 and are complex linear on the forms with values in the
off-diagonal matrices. Therefore, we only need to consider the triple
Maslov indices on the forms with values in the diagonal $su(n)$ matrices, because the
contribution from the off-diagonal $su(n)$ matrices is always
even. Furthermore, the remaining Lagrangians are direct sums of
Lagrangian subspaces of $L^2(\Om^{0+2}(T;U_n))$ and
$L^2(\Om^{1}(T;U_n))$. As before, we  identify $su(3)$ with
$u(2)\oplus \C^2$ and also $U_3$ with $U$ in order to apply Theorem \ref{Thm6.3} to see that, modulo 2, we have
\begin{align}
\label{Eq7.3}\begin{split}\tau_\mu(J\sL_{X_1,\varrho_1(0)}, K^+_{\tilde\varrho_1(0)} \oplus
\hat\sL^+, \sL_{X_2,\varrho_1(0)})   \equiv & \;  \tau_\mu(U\,dm\wedge
d\ell,U\,dm \wedge d\ell,U)  \\
& + \tau_\mu(U\, dm, U\, dm, U\, dm) 
\end{split}\\[2ex]
\label{Eq7.4}\begin{split}
\tau_\mu(J\sL_{X_1,\varrho_1(1)}, K^+_{\tilde\varrho_1(1)} \oplus
\hat \sL^+, \sL_{X_2,\varrho_1(1)})   \equiv & \; \tau_\mu(U'\,dm\wedge
d\ell,U'\,dm \wedge d\ell,U')  \\
 &  + \tau_\mu(U'',U''\,dm \wedge d\ell,U''\,dm\wedge
d\ell)\\
& + \tau_\mu(U\, (dm + pq \, d\ell), U\, dm, U\, (dm - pq \, d\ell)) 
\end{split}
\end{align}
Clearly the Maslov triple indices on the right side of \eqref{Eq7.3} and
the first two on the right side of \eqref{Eq7.4}
vanish by Lemma \ref{Lem2.5}. For the third Maslov triple index
on the right side of \eqref{Eq7.4} note, that $U$ is
2-dimensional. Therefore, \eqref{Eq7.3} and \eqref{Eq7.4} are congruent to $0 \mod 2$.

For the Maslov triple indices concerning the $su(2)$ coefficients, we let $U=U_2$
and see that, modulo two, we have
\begin{align}
\label{Eq7.5}\begin{split}
\tau_\mu(J\sL_{X_1,\si_1(0)}, K^+_{\tilde\si_1(0)} \oplus \hat\sL^+, \sL_{S_2,\si_1(0)})  \equiv  & \; \tau_\mu(U\,dm\wedge
d\ell,U\,dm \wedge d\ell,U) \\
 &+ \tau_\mu(U\, dm, U\, dm, U\, (dm+d\ell)) 
\end{split}\\[2ex]
\label{Eq7.6}\begin{split}
\tau_\mu(J\sL_{S_1,\si_2(0)}, K^+_{\tilde\si_2(0)} \oplus \hat\sL^+, \sL_{X_2,\si_2(0)})  \equiv  &\; \tau_\mu(U\,dm\wedge
d\ell,U\,dm \wedge d\ell,U) \\
 &+\tau_\mu(U\, (d\ell-dm), U\, dm, U\, dm) 
\end{split}\\[2ex]
\label{Eq7.7}
\begin{split}
\tau_\mu(J\sL_{X_1,\si_1(1)}, K^+_{\tilde\si_1(1)} \oplus \hat \sL^+, \sL_{S_2,\si_1(1)}) \equiv  &\; \tau_\mu(U,U\,dm \wedge d\ell,U\,dm\wedge
d\ell)\\
& + \tau_\mu(U\, (dm + pq\, d\ell), U\, dm, U \, (dm+d\ell)) 
\end{split}\\[2ex]
\label{Eq7.8}\begin{split}
\tau_\mu(J\sL_{S_1,\si_2(1)}, K^+_{\tilde\si_2(1)} \oplus \hat \sL^+, \sL_{X_2,\si_2(1)})  \equiv &\; 
 \tau_\mu(U \, dm \wedge d\ell,U\,dm \wedge d\ell,U\,dm\wedge
d\ell) \\
& + \tau_\mu(U\, (d\ell-dm), U\, dm, U\, (dm -
pq \, d\ell)) 
\end{split}
\end{align}

Again, the Maslov triple indices on the right side of \eqref{Eq7.5} and
\eqref{Eq7.6} vanish by Lemma \ref{Lem2.5}. One can see that the Maslov triple
indices on the right side of \eqref{Eq7.7} and \eqref{Eq7.8} vanish as
follows.
Choose the shortest path from $U \, dm$ to $U\, (dm + pq\, d\ell)$
by a rotation as indicated in Figure \ref{Fig4} and notice that this
path intersects neither $U\, d\ell = J(U\,dm)$ nor
 $J(U \, (dm+d\ell))$. Similarly Figure \ref{Fig5} describes the
 situation for a path from $U \, dm$ to $U\, (d\ell-dm)$ by a rotation, which  intersects neither $J(U\,dm)$ nor
 $J(U\, (dm -
pq \, d\ell))$. We summarize, that all Maslov triple indices
in our formula are even as claimed.

\begin{figure}[ht]
\begin{minipage}[t]{7.5cm}
\begin{center}
\leavevmode \psfrag{Rdm}{$U\,dm$} \psfrag{Udl}{$U\,d\ell$}
\psfrag{Rdl}{$U\,d\ell$}
\psfrag{R(dm+pqdl)}{$U(dm+pq\,d\ell)$}
\psfrag{J(R(dm+dl))}{$J(U(dm+d\ell))$}
\includegraphics[scale=.5]{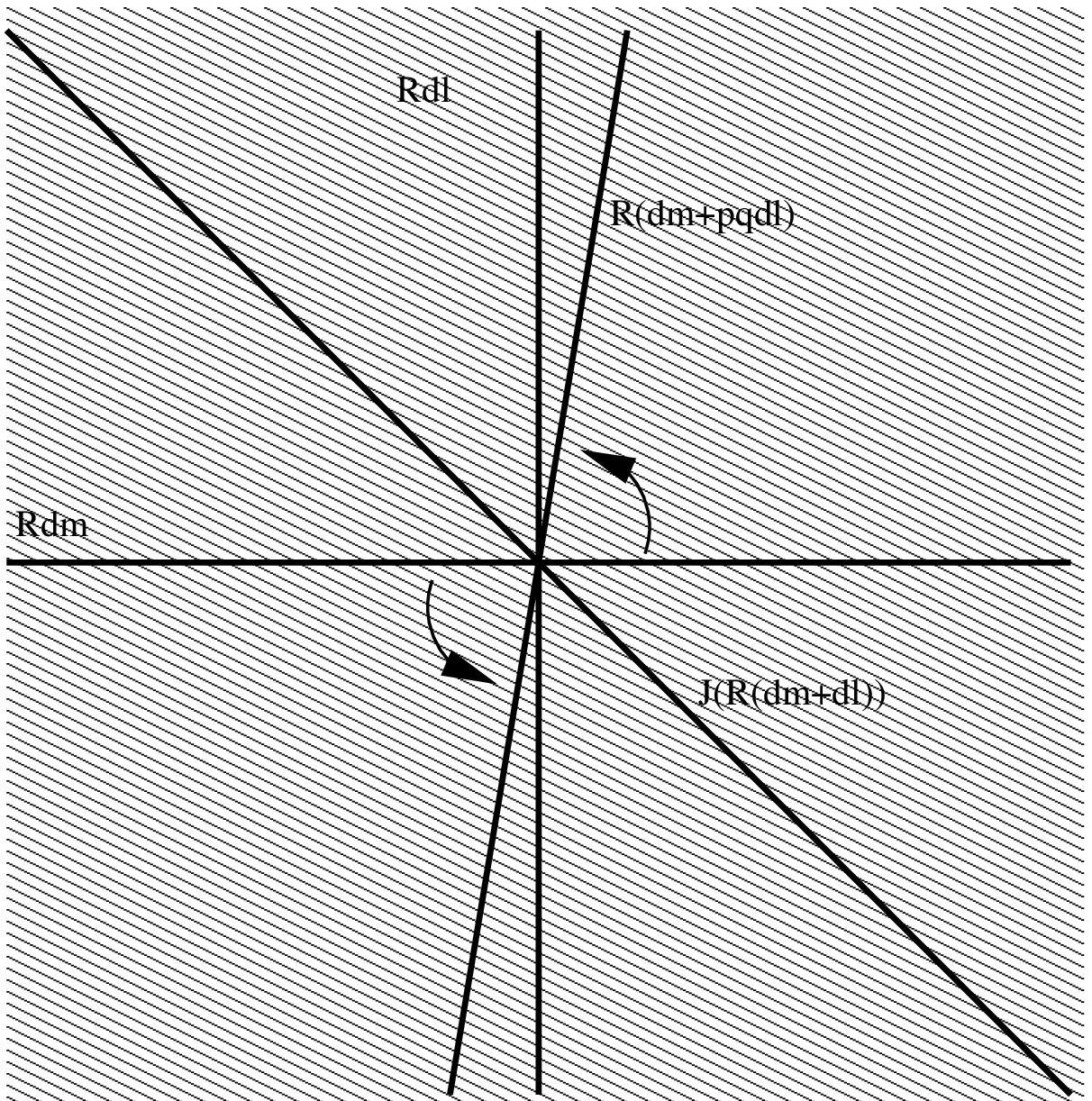}
\caption{Path for \eqref{Eq7.7}}  \label{Fig4}
\end{center}
\end{minipage}
\begin{minipage}[t]{7.5cm}
\begin{center}
\leavevmode \psfrag{Rdm}{$U\,dm$}
\psfrag{Rdl}{$U\,d\ell$}
\psfrag{J(R(dm-pqdl))}{$J(U(dm-pq\,d\ell))$}
\psfrag{R(dl-dm)}{$U(d\ell-dm)$}
\includegraphics[scale=.5]{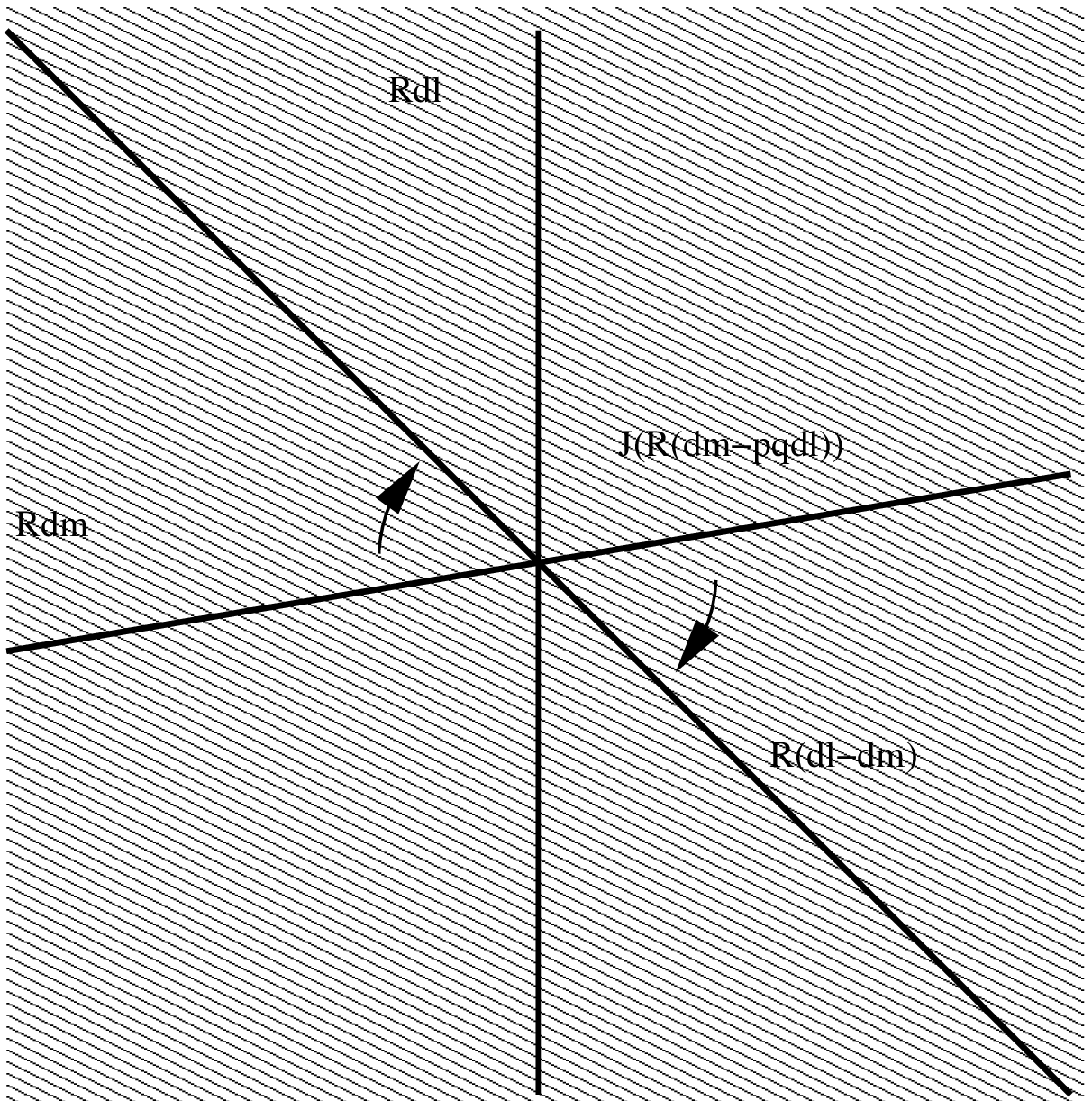}
\caption{Path for \eqref{Eq7.8}} \label{Fig5}
\end{center}
\end{minipage}
\end{figure}

If you recall, that every contribution to the $SU(2)$ Casson invariant is positive, we get the following result directly from Theorem \ref{Thm5.2}.

\begin{thm}\label{mainresult} Suppose $K_1$ and $K_2$ are torus knots of type $(2,q_1)$
and $(2,q_2),$ respectively, and $M$ is their spliced sum. Then $$\la_{SU(3)} (M) = 16 \,
  \la'_{SU(2)}(K_1)\; \la'_{SU(2)}(K_2),$$ where
  $\la'_{SU(2)}$ is normalized to be 1 for the trefoil.
\end{thm}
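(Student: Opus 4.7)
The plan is to combine the enumeration of isolated irreducible representations from Theorem \ref{Thm5.2} with a sign computation showing that each such representation contributes $+1$ to $\tau_{SU(3)}(M)$. By Theorem \ref{Thm5.2}, the number of isolated conjugacy classes in $R^*(M,SU(3))$ equals $\frac{(q_1^2-1)(q_2^2-1)}{4} = 16\,\la'_{SU(2)}(K_1)\la'_{SU(2)}(K_2)$, so it suffices to show that every such $[\al]$ has $su(3)$ spectral flow congruent to $0 \pmod 2$, since the sign of the contribution to the $SU(3)$ Casson invariant is $(-1)^{\SF(\Th,A)}$.

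First, I would fix an isolated irreducible $\al = \al_1 \cup_{\al_0} \al_2$ with associated flat $SU(3)$ connection $B(1)$ on $M$, choose paths $A_1(t),A_2(t),\tilde A_1(t),\tilde A_2(t)$ as in Definition \ref{Def7.1}, and apply the splitting Theorem \ref{Thm2.10} to obtain an expression for $\SF_{su(3)}(\Th,B(1))$ in terms of spectral flows on $X_1,X_2$, a closed loop term on the solid torus from $\SF(\tilde\varrho_1(1-t)*\tilde\varrho_2(t))$, and two Maslov triple indices at the endpoints. Next I would split each spectral flow summand along the invariant decomposition $su(3)=u(2)\oplus\C^2$. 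The $\C^2$ contribution is even by Proposition \ref{Prop7.2}, and on the twist portion $t\in[\tfrac12,1]$ the $u(2)$ spectral flow vanishes by \eqref{Eq7.2}; on $t\in[0,\tfrac12]$ it agrees with the $su(2)$ spectral flow of $\tilde A_i(t)$ by \eqref{Eq7.1}. To tie the remaining $su(2)$ spectral flows to the $SU(2)$ Casson knot invariants, I would follow Taubes \cite{T}: extend each $\tilde A_i(t)$ to a path of flat $SU(2)$ connections on the $+1$ surgery $X_i^+$ using Lemma \ref{Lem7.3} together with the solid-torus data $\sL_{S_i}$, paying the price of two additional Maslov triple indices per surgery.

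Working modulo $2$, I would then verify that every Maslov triple index appearing is even. The off-diagonal parts $W_n$ contribute evenly because $S_a$ and $D_A$ are complex linear there, so only the diagonal subspaces $U$ matter; using Theorem \ref{Thm6.3} to identify $\sL_{X_i,\varrho}$ and the analogous lemma for $\sL_{S_i,\si}$, the triple indices reduce to explicit expressions of the form $\tau_\mu(U\,dm, U\,dm, U\,(dm+d\ell))$, $\tau_\mu(U\,(dm+pq\,d\ell), U\,dm, U\,(dm+d\ell))$, etc. Many vanish directly by the first two identities of Lemma \ref{Lem2.5}; the geometric content is that the remaining ones can be connected to trivial configurations by short rotations in the $2$-plane spanned by $dm$ and $d\ell$ that avoid the relevant $J$-translates, as illustrated in Figures \ref{Fig4} and \ref{Fig5}. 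Finally, the loop term $\SF(\tilde\varrho_1(1-t)*\tilde\varrho_2(t))$ is $4\,\wind(\cdot)$ by Proposition \ref{Prop2.9}, hence even, and the $SU(2)$ spectral flows on the surgered manifolds $X_i^+$ are precisely what compute $\la'_{SU(2)}(K_i)$ with sign $+1$.

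The main obstacle is the bookkeeping of the Maslov triple indices: there are six of them (two from the spliced sum endpoints and four from the two surgery comparisons), each a priori an arbitrary nonnegative integer, and each depending delicately on whether the relevant connection is central, abelian nonabelian, or irreducible. The key simplification is that Theorem \ref{Thm6.3} gives completely explicit formulas for $\sL_A$ in each stratum, so the triple indices reduce to computations with a handful of one- or two-dimensional subspaces of $L^2(\Om^{0+1+2}(T;U))$; once one checks the pairs in \eqref{Eq7.3}--\eqref{Eq7.8} are all even (using Lemma \ref{Lem2.5} for the degenerate cases and a rotation argument for the generic ones), the proof is complete, since then $\SF_{su(3)}(\Th,B(1)) \equiv 0 \pmod 2$ for every isolated $[\al]$, each contributes $+1$, and the total equals $16\,\la'_{SU(2)}(K_1)\la'_{SU(2)}(K_2)$.
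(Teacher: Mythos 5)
Your proposal is correct and follows essentially the same line of argument as the paper: count the isolated irreducible conjugacy classes via Theorem~\ref{Thm5.2}, then show each contributes $+1$ by establishing $\SF_{su(3)}(\Th,B(1))\equiv 0\pmod 2$ through the splitting formula of Theorem~\ref{Thm2.10}, the decomposition $su(3)=u(2)\oplus\C^2$ with Propositions~\ref{Prop7.2} and~\ref{Prop7.4}, the Taubes-style comparison with $+1$ surgeries, Proposition~\ref{Prop2.9} for the loop term, and the parity analysis of the Maslov triple indices in equations \eqref{Eq7.3}--\eqref{Eq7.8} using Lemma~\ref{Lem2.5} and the rotation arguments of Figures~\ref{Fig4} and~\ref{Fig5}. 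You have captured every step and the logical structure matches the paper's own proof.
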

 


\begin{thebibliography}{999999}
\bibitem[APS]{APS} \textbf{M F Atiyah}, \textbf{V K Patodi}, \textbf{I M Singer}, {\em Spectral
  asymmetry and Riemannian geometry I,II,III,}
  Math. Proc. Camb. Phil. Soc., 77 (1975), 43--69; 78 (1975), 405--432;
  79 (1976), 71--99.
\bibitem[BH]{BH} \textbf{H U Boden}, \textbf{C M Herald},
{\em The $\rm SU(3)$ Casson invariant for 3-manifolds split along a 2-sphere or a 2-torus,}
Proceedings of the 1999 Georgia Topology Conference (Athens, GA).
Topology Appl. 124 (2002), no. 2, 187--204.
\bibitem[BHK1]{BHK1} \textbf{H U Boden},
  \textbf{C M Herald}, \textbf{P A Kirk}, {\em An integer valued
  ${\rm SU}(3)$ Casson invariant,} Math. Res. Lett. 8 (2001), no. 5-6, 589--603.
\bibitem[BHK2]{BHK2} \textbf{H U Boden},
  \textbf{C M Herald}, \textbf{P A Kirk}, {\em The integer valued ${\rm SU}(3)$ Casson invariant for Brieskorn spheres,}
J. Differential Geom. 71 (2005), no. 1, 23--83.
\bibitem[BHKK]{BHKK} \textbf{H U Boden}, \textbf{C M Herald}, \textbf{P A Kirk}, \textbf{E P Klassen},
{\em Gauge theoretic invariants of Dehn surgeries on knots,} Geom.
Topol. 5 (2001), 143--226.
\bibitem[BN]{BN} \textbf{S Boyer}, \textbf{A Nicas},
{\em Varieties of group representations and Casson's invariant for rational homology $3$-spheres.
Trans. Amer. Math. Soc. 322 (1990), no. 2, 507--522. }
\bibitem[DK]{DK} \textbf{S K Donaldson}, \textbf{P B Kronheimer},
  { The Geometry of Four--Manifolds,} Oxford Mathematical Monographs,
  1990.
\bibitem[FM]{FM} \textbf{S Fukuhara}, \textbf{N Maruyama},
{\em A sum formula for Casson's $\la$-invariant.
Tokyo J. Math. 11 (1988), no. 2, 281--287. }
\bibitem[H]{H} \textbf{B Himpel}, {\em A Splitting Formula for the Spectral Flow of the Odd Signature Operator on 3-manifolds Coupled to a Path of SU(2) Connections} (2004) math.GT/0412191
\bibitem[K]{K} \textbf{E Klassen}, {\em Representations of knot
  groups in ${\rm SU}(2)$,}  Trans. Amer. Math. Soc.  326  (1991),
  no. 2, 795--828.
\bibitem[KK]{KK} \textbf{P Kirk}, \textbf{E Klassen},
  {\em Computing spectral flow via cup products,} J. Diff. Geom. 40
  (1994), 505--562.
\bibitem[KL]{KL} \textbf{P Kirk}, \textbf{M Lesch},
   {\em The $\eta$-invariant, Maslov index, and spectral flow for Dirac-type operators on manifolds with boundary,}
Forum Math. 16 (2004), no. 4, 553--629.
\bibitem[N]{N} \textbf{L Nicolaescu} {\em The Maslov index, the
  spectral flow, and decompositions of manifolds.} Duke Math. J. 80 (1995),
  485--533.
\bibitem[S]{S} \textbf{N Saveliev},
{ Invariants for homology 3-spheres,} {\it Encylopaedia of
Math. Sci,} {\bf 140}, Springer-Verlag, Berlin Heidelberg, 2002.
\bibitem[T]{T} \textbf{C H Taubes}, {\em Casson's invariant and gauge theory,}
J. Differential Geom. 31 (1990), no. 2, 547--599.
\end{thebibliography}
\end{document}